\setlist[enumerate]{leftmargin=.5in}
\setlist[itemize]{leftmargin=.5in}
\crefname{hypothesis}{Hypothesis}{Hypotheses}
\title{Efficient Diffusion Posterior Sampling for Noisy Inverse Problems\thanks{Submitted to the editors DATE.
\funding{This work was funded by 2024 Youth Talents Support Program from Capital Normal University and supported by the Open Project of Key Laboratory of Mathematics and Information Networks (Beijing University of Posts and Telecommunications), Ministry of Education, China, under Grant No. KF202401).}}}
\author{Ji Li\thanks{Academy for Multidisciplinary Studies, Capital Normal University, Beijing 
  (\email{matliji@163.com}).}
\and Chao Wang\thanks{Department of Radiation Oncology, University of Kansas Medical Center, Kansas City
  (\email{wywwwnx@163.com}).}}
\DeclareMathOperator{\diag}{diag}
\newcommand{\norm}[1]{\left\lVert#1\right\rVert}
\newlength\stextwidth
\newcommand\makesamewidth[3][c]{%
  \settowidth{\stextwidth}{#2}%
  \makebox[\stextwidth][#1]{#3}%
}
\DeclareRobustCommand\onedot{\futurelet\@let@token\@onedot}
\def\@onedot{\ifx\@let@token.\else.\null\fi\xspace}
\def\eg{\emph{e.g}\onedot} 
\def\ie{\emph{i.e}\onedot}
\def\etal{\emph{et al}\onedot}
\newcommand{\bl}{\color{blue}}
\newcommand{\ul}[1]{\underline{#1}}
\begin{document}

\maketitle

\begin{abstract}
  The pretrained diffusion model as a strong prior has been leveraged to address inverse problems in a zero-shot manner without task-specific retraining. Different from the unconditional generation, the measurement-guided generation requires estimating the expectation of clean image given the current image and the measurement. With the theoretical expectation expression, the crucial task of solving inverse problems is to estimate the noisy likelihood function at the intermediate image sample. Using the Tweedie's formula and the known noise model, the existing diffusion posterior sampling methods perform gradient descent step with backpropagation through the pretrained diffusion model. To alleviate the costly computation and intensive memory consumption of the backpropagation, we propose an alternative maximum-a-posteriori (MAP)-based surrogate estimator to the expectation. With this approach and further density approximation, the MAP estimator for linear inverse problem is the solution to a traditional regularized optimization, of which the loss comprises of data fidelity term and the diffusion model related prior term. Integrating the MAP estimator into a general denoising diffusion implicit model (DDIM)-like sampler, we achieve the general solving framework for inverse problems. Our approach highly resembles the existing $\Pi$GDM without the manifold projection operation of the gradient descent direction. The developed method is also extended to nonlinear JPEG decompression. The performance of the proposed posterior sampling is validated across a series of inverse problems, where both VP and VE SDE-based pretrained diffusion models are taken into consideration.
\end{abstract}

\begin{keywords}
Diffusion generative model, posterior sampling, inverse problems
\end{keywords}

\begin{MSCcodes}
94A08, 94A12, 68T07
\end{MSCcodes}

\section{Introduction}
\label{sec:intro}

In the field of image reconstruction, the degradation of the measurement procedure can be modeled as a linear transformation of clear images. These problems include denoising, inpainting, image super-resolution, compressing sensing~\cite{donoho2006compressed}, computed tomography (CT) \cite{chakhlov2016current}, magnetic resonance imaging (MRI)~\cite{liang2000principles} in medicine, and many other problems.
For a variant of linear inverse problems, we have the measurement $\bm{y}$ from the forward model
\begin{equation}
  \label{eq:prob}
  \bm{y} = \bm{A}\bm{x}_0+\bm{n}, 
\end{equation}
where $\bm{A}$ is the linear operator of the model, and $\bm{n}$ is the measurement noise. Generally, we have $\bm{A}\in\mathbb{R}^{m\times n},m\leq n$ to imply the ill-posedness of the problem. In the case of Gaussian noise, $\bm{n}\sim \mathcal{N}(0,\sigma_y^2\bm{I})$. In the explicit form, the likelihood $p(\bm{y}|\bm{x}_0)\sim\mathcal{N}(\bm{y}|\bm{A}\bm{x}_0,\sigma_y^2\bm{I})$.

From the Bayesian image restoration perspective, we consider the posterior sampling from $p(\bm{x}_0|\bm{y})$, which is proportional to $p(\bm{y}|\bm{x}_0)p(\bm{x}_0)$, to achieve the solution to~\eqref{eq:prob}. The key component of solving inverse problems is to infer the distribution $p(\bm{x}_0)$ as the noise model $p(\bm{y}|\bm{x}_0)$ is assumed known. The prior $p(\bm{x}_0)$ to some extent helps regularize the problem and circumvent the noise amplification. The design of prior $p(\bm{x}_0)$ is an ever-lasting theme in the history of solving inverse problems. There are many approximated priors and related algorithms developed for inverse problems, including the hand-crafted sparsity promoting prior for a universal natural image, and the learned sparse representation under a dictionary basis.

The misspecified or inaccurate prior $p(\bm{x}_0)$ leads to performance degradation of image restoration. We expect that utilizing the ideal $p_{\text{data}}(\bm{x}_0)$ can boost performance significantly. Due to the intractability of the underlying $p_{\text{data}}(\bm{x}_0)$, we seek the best density estimation as a surrogate. In the light of outstanding generation ability, the diffusion model~\cite{song2020score,song2019generative,ho2020denoising} as a prior has been leveraged to solve inverse problems.

According to the usage manners of the diffusion models, the existing diffusion model-based approaches can be categorized into two groups: \emph{conditional score matching methods} that retrain a conditional score neural network using datasets consisting of paired degradation and clean images~\cite{saharia2022image,saharia2022palette,whang2022deblurring,luo2023image,xia2023diffir,gao2023implicit}, and the \emph{zero-shot methods} that leverage pretrained unconditional diffusion models to perform posterior sampling for variants of inverse problems without the task-specific retraining process. Compared to conditional score matching methods, the zero-shot methods have a distinct advantage by avoiding model retraining on task-specific datasets, thereby enhancing their applicability across a broader spectrum of applications. It beats the conditional score matching methods in the flexibility, as it decouples the data fidelity and the prior like the traditional solutions to inverse problems.

In this paper, we focus on the zero-shot methods for its avoidance of task-specific retraining. In order to guide the conditional generation process, the measurement information is fused in the intermediate iterations via several approaches. The first approach completed the null space inpainting in the spectral domain~\cite{choi2021ilvr,lugmayr2022repaint,kawar2022denoising,wang2022zero}. The second approach enforces data consistency to approximate the intermediate predicted clean image by solving a proximal optimization problem~\cite{song2021solving,zhu2023denoising}. The third approach estimates the conditional score function and leverages it to the unconditional sampling by Langevin dynamics~\cite{jalal2021robust,chung2022score,kawar2021snips,kawar2021stochastic} or modifying the unconditional reverse sampling followed by a gradient descent step~\cite{chung2022diffusion,meng2022diffusion,song2022pseudoinverse,feng2023score}. Lastly, some works adopted the variational inference to provide the distribution surrogate for the true posterior distribution~\cite{graikos2022diffusion,feng2023score,ozturkler2023regularization}. Overall, all the methods can be regarded as approximations to the posterior sampling from $p(\bm{x}_0|\bm{y})$. Notice that the methods~\cite{kawar2022denoising,song2022pseudoinverse,wang2022zero} depend on the available SVD for the linear operator. The restriction narrows their applications for linear problems without available SVD, such as the tomography in medicine~\cite{wang2022zero,kawar2022denoising,kawar2021snips}. We focus on the SVD-free approximated posterior sampling methods for broader applications. 

Notice that there are two aspects of posterior diffusion sampling methods for inverse problem: the generation sampler and the measurement guidance approach. To the best of our knowledge, there is a lack of the solving framework taking the two aspects into consideration. To address the issue, we introduce a parameterized non-Markov chain based DDIM-like sampler with tunable magnitude of injected noise per iteration and a general measurement-guided conditional generation scheme. The DDIM-like sampler is able to improve the restoration performance with tunable parameter. Recognizing the unconditional generation only depends on the expectation $\mathbb{E}[\bm{x}_0|\bm{x}_t]$, the conditional generation is achieved with $\mathbb{E}[\bm{x}_0|\bm{x}_t]$ being replaced by expectation $\mathbb{E}[\bm{x}_0|\bm{x}_t,\bm{y}]$, which depends on $\nabla_{\bm{x}_t}\log p(\bm{y}|\bm{x}_t)$. With this view, the existing methods~\cite{chung2022diffusion,song2022pseudoinverse} estimated $p(\bm{y}|\bm{x}_t)$ using different estimations of $p(\bm{x}_0|\bm{x}_t)$, which equivalently estimated $\mathbb{E}[\bm{x}_0|\bm{x}_t,\bm{y}]$. The computation of $\nabla_{\bm{x}_t}\log p(\bm{y}|\bm{x}_t)$ will require the backpropagation through the neural score network, which is computationally demanding and costly for the memory consumption. More details are discussed in Section~\ref{sec:ex}. To avoid such backpropagation, instead of using the Tweedie's formula, DMPS~\cite{meng2022diffusion} resorts to the Gaussian $p(\bm{x}_0|\bm{x}_t)=\mathcal{N}(\gamma_t\bm{x}_t,\bar{r}_t^2\bm{I})$ with a crude uninformative assumption of $p(\bm{x}_0)$.

To summarize, our contributions are as follows:
\begin{enumerate}
\item We propose a unified posterior diffusion sampling framework for inverse problem with parameterized DDIM-like sampler and expectation $\mathbb{E}[\bm{x}_0|\bm{x}_t,\bm{y}]$-based measurement guidance.
\item To save computational cost of existing approximations to $\mathbb{E}[\bm{x}_0|\bm{x}_t,\bm{y}]$, we propose the maximum-a-posteriori (MAP) surrogate for $\mathbb{E}[\bm{x}_0|\bm{x}_t,\bm{y}]$, which leads to a novel and efficient measurement guidance. The derived method is also highly related to the existing works~\cite{choi2021ilvr,chung2022come,song2021solving} for noiseless measurements, ignoring the possible scaling. The key advantages of our estimation are the SVD-free and backpropagation free properties. As a result, our method results in a notable acceleration of the overall computational runtime than the existing DPS and $\Pi$GDM.
\item We extend the method to some specific noisy nonlinear problems, such as JPEG decompressing. 
\item We validate our proposed scheme across a variety of linear problems, demonstrating the effectiveness of our method.
\end{enumerate}

The organization of the paper is as follows. In Section~\ref{sec:review}, we review the related work for solving inverse problems, especially the generative models for inverse problems. In Section~\ref{sec:back}, we present the background of diffusion models, the posterior sampling framework for inverse problem solving and the existing works on diffusion posterior sampling for linear problems. In Section~\ref{sec:meth}, the main methodology of the paper is provided. In Section~\ref{sec:exp}, our method is validated across a series of linear problems and some specific nonlinear problems. The conclusion comes the last in Section~\ref{sec:conclusion}.

\section{Related work}\label{sec:review}

\subsection{Traditional hand-crafted prior for inverse problems}
Various approximated prior for natural images are investigated, including the sparsity-inspired statistics~\cite{rudin1992nonlinear,chambolle2010introduction,chan2006total,cai2012image,dong2012wavelet} and other feature occurrence prior~\cite{pleschberger2020explicit}. The hand-crafted prior-based solution works well for a wide category of image restoration tasks, but their performance is upper limited due to the lack of a more specific statistics description for certain images. Learning-based dictionary biases~\cite{aharon2006k}, or data-driven wavelet frames~\cite{quan2015data} can be constructed such that the restored image has a sparse expression within the basis. However, the dictionary basis is image-specific, which results in the necessity of re-learning the dictionary when applied to a different image. 

\subsection{Supervise learning for inverse problems}
With the emergence of deep learning techniques, many works directly learn the solution map from measurement $\bm{y}$ to the estimated $\bm{x}_0$ using supervised learning, see \eg \cite{Mousavi2015,zhang2017learning,liu2018image,zhang2018ista,sun2016deep,shi2019scalable,Nan_2020_CVPR,ding2020low}. The learning is driven by a collection of paired datasets, and the model is trained by individual image task. Therefore, its capacity for generalization in diverse image tasks is limited, even in the absence of varying measurement noise, not to mention scenarios of changing the degradation model. 
Supervised learning does not explicitly learn the prior $p_{\text{data}}(\bm{x}_0)$. The trained model is task-specific, limiting its adaptability for transferring to other closely related tasks. 

\subsection{GAN inversion and conditional normalizing flow}
There are several kinds of generative models, including variational auto-encoder (VAE)~\cite{kingma2013auto}, adversarial generative network (GAN)~\cite{goodfellow2020generative}, normalizing flow (NF)~\cite{kingma2018glow}. Exploiting of GAN and NF in solving inverse problems can be referred to~\cite{bora2017compressed,whang2021solving}. The so-called \emph{GAN inversion} assumes that the prior on the generative image is imposed by the network manifold. The NF actually gives the evaluation of $-\log p(\bm{x}_0)$~\cite{kingma2018glow,dinh2016density}, and it can serve as a regularization term to solve inverse problems. The application to compressed sensing for VAE~\cite{kadkhodaie2021stochastic} and GAN models~\cite{bora2017compressed,pan2021exploiting} can be found in the literature. Due to the representation errors, their application to inverse problems does not perform well as the diffusion model as a prior for inverse problems.

\subsection{Diffusion models for inverse problems}
As a prior, diffusion posterior sampling is the main framework for inverse problems. DDRM~\cite{kawar2022denoising} operators the information fusion in the spectral domain. DDNM~\cite{wang2022zero} resorts to the null space decomposition in the clean image domain. Both of them are SVD-dependent, as the implemented algorithm considers iteration in the spectral domain. Other works estimate the unknown likelihood $p(\bm{y}|\bm{x}_t)$ at the intermediate step. Several assumptions and estimations are proposed, such as~\cite{chung2022diffusion,song2022pseudoinverse,meng2022diffusion}.

\subsection{Direct diffusion models for inverse problems}
Like supervised learning, direct diffusion model trained a deep model, which is leveraged to map the measurement to the solution via an iterative scheme. The training stage borrows the Schr\"odinger bridge concept~\cite{pmlr-v202-liu23ai} or employs the new degradation SDE involving the measurement~\cite{delbracio2023inversion,luo2023image}.

\subsection{Plug-and-play methods for inverse problems}
Plug-and-play methods build upon the insight that the proximal step of solution to regularized optimization for inverse problems is effectively a denoising operation. Hence, such step can be performed using non-learned denoisers, such as BM3D, or the neural network-based denoiser~\cite{zhang2017beyond}. Along with the PnP insight, many PnP-based methods have been developed, such as~\cite{meinhardt2017learning,sun2019online,tan2024provably}. The convergence guarantees of such PnP-based method assume non-expansiveness of the denoiser~\cite{ryu2019plug,hurault2022gradient,hurault2022proximal}. Recently, the flow-based generative model is also integrated into the PnP-based method~\cite{martin2024pnp}.

\section{Background}\label{sec:back}

\subsection{SDE modeling for diffusion models}

The diffusion model is a powerful approach to generate a clean image from the underlying data distribution of a given dataset. The diffusion model comprises of two processes: \emph{forward process} and \emph{generative process}. The procedure of the forward process is to gradually perturb the clean data $\bm{x}_0$ with a series of preconfigured noise scales. From the continuous modeling perspective, the process is described using the stochastic differential equation (SDE):
\begin{equation}
  \label{eq:for}
  d\bm{x}_t = f(t)\bm{x}_tdt + g(t) d\bm{w}_t,
\end{equation}
where $\bm{w}_t$ is the standard Brownian process. Solving~\eqref{eq:for}, we can derive the transition conditional distribution
\begin{equation}
  \label{eq:trans}
  q(\bm{x}_t|\bm{x}_0)\sim \mathcal{N}(\bm{x}_t|\alpha_t\bm{x}_0,\sigma_t^2\bm{I}),\quad \text{ for }t\in (0,T],
\end{equation}
where $\alpha_t,\sigma_t$ are functions of $t$.
More specifically, we have
\begin{equation}
    \alpha_t = \exp(\int_0^t f(\tau) d\tau), \qquad \sigma_t^2= \int_0^t\exp(\int_{\tau}^t 2f(s)ds)g^2(\tau)d\tau.
\end{equation}
The options of $f(t),g(t)$ shall ensure that $\bm{x}_T$ is a standard Gaussian $\mathcal{N}(\bm{0},\bm{I})$. %

The generative process is to revert the forward process, it obtains a clean image gradually from a Gaussian noise image $\bm{x}_T$. The reverse process is governed by the reverse SDE:
\begin{equation}
  \label{eq:rev}
  d\bm{x}_t = \left[f(t)\bm{x}_t-g^2(t)\nabla_{\bm{x}_t}\log p(\bm{x}_t)\right]dt + g(t) d\overline{\bm{w}}_t,
\end{equation}
where $d\overline{\bm{w}}_t$ corresponds the Brownian process running in backward time. To enable the generation, the unknown score function $\nabla_{\bm{x}_t}\log p(\bm{x}_t)$ is approximated by a time-dependent neural network $s_{\theta}(\bm{x}_t,t)$, which is trained by minimizing the score matching loss
\begin{equation}
  \label{eq:sm}
  \mathcal{L}(\theta):=\mathbb{E}_t\left\{\lambda(t)\mathbb{E}_{\bm{x}_0}\mathbb{E}_{\bm{x}_t|\bm{x}_0}\left[\norm{s_{\theta}(\bm{x}_t,t)-\nabla_{\bm{x}_t}\log p(\bm{x}_t|\bm{x}_0)}_2^2\right]\right\},
\end{equation}
where $\lambda(t)$ is a weighting function~\cite{song2021maximum}.

\subsection{Inverse problems using diffusion models}

In the Bayesian framework, the solution to problem~\eqref{eq:prob} is to produce samples from the conditional distribution $p(\bm{x}_0|\bm{y})$. In lieu of the role of score function $\nabla_{\bm{x}_t}\log p(\bm{x}_t)$ in~\eqref{eq:rev}, the samples from $p(\bm{x}_0|\bm{y})$ can be generated by solving the following SDE
\begin{equation}
  \label{eq:uncond}
    d\bm{x}_t = \left[f(t)\bm{x}_t-g^2(t)\nabla_{\bm{x}_t}\log p(\bm{x}_t|\bm{y})\right]dt + g(t) d\overline{\bm{w}}_t.
  \end{equation}
The unknown \emph{conditional score} function $\nabla_{\bm{x}_t}\log p(\bm{x}_t|\bm{y})$ requires approximation. Similar to the score function $\nabla_{\bm{x}_t}\log p(\bm{x}_t)$, some works learns a deep model $\bm{s}_{\theta}(\bm{x}_t,t;\bm{y})$ using paired dataset~\cite{whang2022deblurring,liu2023dolce}. The measurement $\bm{y}$, as the conditional information, is also fed into in the network. This approach requires task-specific training the neural network for approximating score function. Hence, when the image restoration task is shift, the retraining of the conditional score function is required. Another retraining-free approach is to utilize the Bayes' rule. The target $p(\bm{x}_t|\bm{y})$ connects the prior $p(\bm{x}_t)$ and the likelihood function $p(\bm{y}|\bm{x}_t)$. To leverage the underlying prior from the diffusion model, we can achieve the samples by solving the reverse SDE:
\begin{equation}
  \label{eq:cond_sde}
    d\bm{x}_t = \left[f(t)\bm{x}_t-g^2(t)(\nabla_{\bm{x}_t}\log p(\bm{x}_t)+\nabla_{\bm{x}_t}\log p(\bm{y}|\bm{x}_t))\right]dt + g(t) d\overline{\bm{w}}_t.
\end{equation}
Consequently, we can solve a variant of image restoration problems without task-specific retraining, as long as the prior of the diffusion model is still suitable for the new testing image requiring restoration. The training-free benefit leads to a raised interest of new inverse problem solving approach via diffusion model. 

\begin{remark}[ODE modeling for diffusion model]
  Except for the SDE modeling of diffusion model, another probability flow-based ODE modeling is also widely used for generative models. Our developed scheme for diffusion posterior sampling actually contains the ODE modeling as a special case.
\end{remark}

\subsection{Tweedie's formula for denoising}

In the context of generation process of diffusion model, at each timestep $t$, the core is estimating the expectation $\mathbb{E}[\bm{x}_0|\bm{x}_t]$ of $p(\bm{x}_0|\bm{x}_t)$. Corresponding to the Gaussian corruption following $p(\bm{x}_t|\bm{x}_0)$ in the forward process, the expectation estimate $\mathbb{E}[\bm{x}_0|\bm{x}_t]$ actually implements the denoising of $\bm{x}_t$. Though the distribution $p(\bm{x}_0|\bm{x}_t)$ is intractable, its mean and variance can be determined through the Tweedie's formula.

\begin{lemma}[Mean and variance of $p(\bm{x}_0|\bm{x}_t)$~\cite{boys2023tweedie}]\label{lem:1}
  Given the Gaussian conditional density $p(\bm{x}_t|\bm{x}_0)\sim\mathcal{N}(\bm{x}_t|\alpha_t\bm{x}_0,\sigma_t^2\bm{I})$, the mean and variance of $p(\bm{x}_0|\bm{x}_t)$ are given by
  \begin{equation}\label{eq:twee}
    \begin{aligned}
      \mathbb{E}[\bm{x}_0|\bm{x}_t] &= \frac{\bm{x}_t+\sigma_t^2\nabla_{\bm{x}_t}\log p(\bm{x}_t)}{\alpha_t}\\
      \mathbb{V}[\bm{x}_0|\bm{x}_t] &= \frac{\sigma_t^2}{\alpha_t^2}\left(\bm{I}+\sigma_t^2\nabla_{\bm{x}_t}^2\log p(\bm{x}_t)\right).
    \end{aligned}
  \end{equation}
\end{lemma}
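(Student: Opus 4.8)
\section*{Proof proposal}

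The plan is to reduce everything to differentiating the marginal $p(\bm{x}_t)=\int p(\bm{x}_t|\bm{x}_0)p(\bm{x}_0)\,d\bm{x}_0$ under the integral sign and reading off the posterior moments from the Gaussian structure of $p(\bm{x}_t|\bm{x}_0)$. The key starting observation is that, for the Gaussian likelihood in~\eqref{eq:trans}, the score of the transition kernel is explicit: $\nabla_{\bm{x}_t}\log p(\bm{x}_t|\bm{x}_0)=(\alpha_t\bm{x}_0-\bm{x}_t)/\sigma_t^2$, equivalently $\nabla_{\bm{x}_t}p(\bm{x}_t|\bm{x}_0)=\frac{\alpha_t\bm{x}_0-\bm{x}_t}{\sigma_t^2}\,p(\bm{x}_t|\bm{x}_0)$. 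Everything else is two applications of differentiation under the integral combined with Bayes' rule $p(\bm{x}_0|\bm{x}_t)=p(\bm{x}_t|\bm{x}_0)p(\bm{x}_0)/p(\bm{x}_t)$.

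For the mean, I would differentiate the marginal once, interchange $\nabla_{\bm{x}_t}$ with the integral, substitute the transition score, and divide by $p(\bm{x}_t)$. Bayes' rule turns the resulting integral into a posterior expectation, giving $\nabla_{\bm{x}_t}\log p(\bm{x}_t)=(\alpha_t\mathbb{E}[\bm{x}_0|\bm{x}_t]-\bm{x}_t)/\sigma_t^2$. Solving this linear relation for $\mathbb{E}[\bm{x}_0|\bm{x}_t]$ immediately yields the first line of~\eqref{eq:twee}.

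For the variance, I would differentiate a second time. The cleanest route is to first compute the Jacobian $\nabla_{\bm{x}_t}\mathbb{E}[\bm{x}_0|\bm{x}_t]$ directly: differentiating $\mathbb{E}[\bm{x}_0|\bm{x}_t]=\int\bm{x}_0\,p(\bm{x}_0|\bm{x}_t)\,d\bm{x}_0$ and using the identity $\nabla_{\bm{x}_t}p(\bm{x}_0|\bm{x}_t)=\frac{\alpha_t}{\sigma_t^2}(\bm{x}_0-\mathbb{E}[\bm{x}_0|\bm{x}_t])\,p(\bm{x}_0|\bm{x}_t)$, which follows by subtracting the marginal score from the transition score. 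The bilinear terms $\int\bm{x}_0(\bm{x}_0-\mathbb{E}[\bm{x}_0|\bm{x}_t])^{\!\top}p(\bm{x}_0|\bm{x}_t)\,d\bm{x}_0$ then assemble exactly into the conditional covariance, so that $\nabla_{\bm{x}_t}\mathbb{E}[\bm{x}_0|\bm{x}_t]=\frac{\alpha_t}{\sigma_t^2}\mathbb{V}[\bm{x}_0|\bm{x}_t]$. Taking $\nabla_{\bm{x}_t}$ of the already-derived mean formula gives $\nabla_{\bm{x}_t}^2\log p(\bm{x}_t)=-\frac{1}{\sigma_t^2}\bm{I}+\frac{\alpha_t}{\sigma_t^2}\nabla_{\bm{x}_t}\mathbb{E}[\bm{x}_0|\bm{x}_t]$; substituting the Jacobian and solving for $\mathbb{V}[\bm{x}_0|\bm{x}_t]$ produces the second line of~\eqref{eq:twee}.

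The main obstacle I anticipate is bookkeeping in the second-order step rather than any deep difficulty: one must keep the outer-product/transpose structure straight so that the emerging matrix is genuinely the covariance $\mathbb{E}[\bm{x}_0\bm{x}_0^{\!\top}|\bm{x}_t]-\mathbb{E}[\bm{x}_0|\bm{x}_t]\mathbb{E}[\bm{x}_0|\bm{x}_t]^{\!\top}$, and one must justify differentiating under the integral sign. The latter is a dominated-convergence-type regularity requirement, guaranteed here by the rapidly decaying Gaussian tails of $p(\bm{x}_t|\bm{x}_0)$ together with integrability of $p(\bm{x}_0)$. Notably, no SVD or special structure of the prior $p(\bm{x}_0)$ is needed; the entire argument rests on the explicit Gaussian transition score and two differentiations under the integral.
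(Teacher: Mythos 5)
Your proposal is correct. The paper itself does not prove Lemma~\ref{lem:1}; it imports the statement from the cited reference~\cite{boys2023tweedie}, so there is no in-paper argument to compare against. What you give is the standard self-contained derivation of the (second-order) Tweedie formula, and every step checks out: the explicit transition score $\nabla_{\bm{x}_t}\log p(\bm{x}_t|\bm{x}_0)=(\alpha_t\bm{x}_0-\bm{x}_t)/\sigma_t^2$, differentiation under the integral plus Bayes' rule giving $\nabla_{\bm{x}_t}\log p(\bm{x}_t)=(\alpha_t\mathbb{E}[\bm{x}_0|\bm{x}_t]-\bm{x}_t)/\sigma_t^2$, the posterior-score identity $\nabla_{\bm{x}_t}p(\bm{x}_0|\bm{x}_t)=\frac{\alpha_t}{\sigma_t^2}(\bm{x}_0-\mathbb{E}[\bm{x}_0|\bm{x}_t])p(\bm{x}_0|\bm{x}_t)$, the resulting Jacobian identity $\nabla_{\bm{x}_t}\mathbb{E}[\bm{x}_0|\bm{x}_t]=\frac{\alpha_t}{\sigma_t^2}\mathbb{V}[\bm{x}_0|\bm{x}_t]$, and the final substitution all combine to give exactly~\eqref{eq:twee}. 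You also correctly identify the only technical point needing justification, namely interchanging $\nabla_{\bm{x}_t}$ with the integral, which the Gaussian tails of $p(\bm{x}_t|\bm{x}_0)$ handle for any prior $p(\bm{x}_0)$ with finite second moment (a mild hypothesis worth stating explicitly, since $\mathbb{V}[\bm{x}_0|\bm{x}_t]$ must exist for the claim to make sense). This is essentially the same argument as in the cited source, so nothing further is needed.
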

The Tweedie's formula, especially its expectation $\mathbb{E}[\bm{x}_0|\bm{x}_t]$ is leveraged to drive the numerical approximation to the reverse sampling SDE~\eqref{eq:uncond}. The score function $\nabla_{\bm{x}_t}\log p(\bm{x}_t)$ in~\eqref{eq:twee} is replaced by the learned diffusion model $s_{\theta}(\bm{x}_t,t)$. 

\subsection{Diffusion posterior sampling in discretization case}

There are three common configurations of the noise schedules for the parameters $\alpha_t$ and $\sigma_t$ for the forward noise corruption process~\cite{song2020score}. Out of the three configurations, we will leverage the pretrained model trained from two groups of configurations. For comprehensive presentation, we review the two experimental configurations: VE SDE~\cite{song2019generative} and VP SDE~\cite{ho2020denoising} respectively. From continuous perspective, VE SDE sets 
$f(t)=0, g(t)=\sqrt{\frac{d[\sigma^2(t)]}{dt}}$, while VP SDE sets $f(t)=-\frac{1}{2}\beta(t)$, $g(t)=\sqrt{\beta(t)}$. For both of them, the transition distribution~\eqref{eq:trans} can be computed. For VE SDE, $\alpha_t =\alpha_0= 1, \sigma_t^2 = \sigma^2(t)$, For VP SDE, $\alpha_t=\exp(-\frac{1}{2}\int_0^t\beta(s) ds), \sigma_t^2 = \int_0^t \exp(\int_{\tau}^t-\beta(s)ds)\beta(\tau)d\tau$. 
For VP SDE, we set $\beta(t)=\bar{\beta}_{\min} + t(\bar{\beta}_{\max}-\bar{\beta}_{\min})$ for $t\in [0,1]$. Following Ho \etal~\cite{ho2020denoising}, we adopt the following discretization scheme. We define $\beta_i:=\beta(i/N)$ for $i=1,\ldots,N$ and  define $\overline{\alpha}_i := \Pi_{j=1}^i (1-\beta_j)$. We denote $\bm{x}_i:=\bm{x}_{t_i}$ for simplicity. Hence the discretized transition distribution $p(\bm{x}_i|\bm{x}_0)$ satisfies $\alpha_t = \sqrt{\overline{\alpha}_i}$ and $\sigma_t^2 = 1-\overline{\alpha}_i$. For VE SDE, we set $\sigma(t)= \sigma_{\min}\left(\frac{\sigma_{\max}}{\sigma_{\min}}\right)^t$ and define $\sigma_t:=\sigma(i/N)$ for $i=1,\ldots,N$. For the discretized transition, we have $p(\bm{x}_i|\bm{x}_0)=\mathcal{N}(\bm{x}_0,\sigma_i^2\bm{I})$. Hereafter, for simple notations, we use the index $t$ and $i$ interchangeably, 

With the conditional reverse SDE for posterior sampling~\eqref{eq:cond_sde}, in discretization case, the general method can be achieved using the two-step iteration, which comprises of two consecutive steps
\begin{equation}
  \label{eq:con}
  \begin{aligned}
    \bm{x}_{t-1} &= \text{Unconditional sampling}(\bm{x}_t,s_{\theta}(\bm{x}_t,t))\\
    \bm{x}_{t-1} &=\bm{x}_{t-1} -\xi_t\nabla_{\bm{x}_t}\log p(\bm{y}|\bm{x}_t),
  \end{aligned}
\end{equation}
where the unconditional sampling can be selected form DDPM~\cite{ho2020denoising} and DDIM~\cite{song2021denoising} and $\xi_t$ is the stepsize (strength level) for the measurement guidance. The remaining thing is to estimate the intractable term $\nabla_{\bm{x}_t}\log p(\bm{y}|\bm{x}_t)$. There are a line of works aiming to this target by approximating the noisy likelihood $p(\bm{y}|\bm{x}_t)$ first and its gradient is achieved by direct computation.

\subsubsection{Existing approximations of the likelihood and their issues}\label{sec:ex}

As only the likelihood function $p(\bm{y}|\bm{x}_0)$ is known, one can factorize the $p(\bm{y}|\bm{x}_t)=\int p(\bm{y}|\bm{x}_0)p(\bm{x}_0|\bm{x}_t) d\bm{x}_0$.
DPS proposed by~\cite{chung2022diffusion} leveraged the delta-like approximation $p(\bm{y}|\bm{x}_t)\simeq p(\bm{y}|\hat{\bm{x}}_0(\bm{x}_t))$, where $\hat{\bm{x}}_0(\bm{x}_t)$ is the expectation $\mathbb{E}[\bm{x}_0|\bm{x}_t]$ using Tweedie's formula with learned score network:
\begin{equation}\label{eq:uncs}
  \hat{\bm{x}}_0(\bm{x}_t) = \frac{\bm{x}_t+\sigma_t^2s_{\theta}(\bm{x}_t,t)}{\alpha_t}.
\end{equation}
For Gaussian noisy linear problems, DPS approximated
\begin{equation}
  p(\bm{y}|\bm{x}_t) \sim \mathcal{N}(\bm{A}\hat{\bm{x}}_0(\bm{x}_t),\sigma_y^2\bm{I}).
\end{equation}
Instead of using the approximation $p(\bm{x}_0|\bm{x}_t)=\delta(\bm{x}_0-\hat{\bm{x}}_0(\bm{x}_t))$, $\Pi$GDM proposed by~\cite{song2022pseudoinverse} leveraged Gaussian approximation $p(\bm{x}_0|\bm{x}_t)\sim \mathcal{N}(\hat{\bm{x}}_0(\bm{x}_t),r_t^2\bm{I})$ ($r_t^2$'s are preconfigured hyperparameters).
For Gaussian noisy linear problems, $\Pi$GDM approximated
\begin{equation}
  p(\bm{y}|\bm{x}_t) \sim \mathcal{N}(\bm{A}\hat{\bm{x}}_0(\bm{x}_t), \sigma_y^2\bm{I}+r_t^2\bm{A}\bm{A}^T).
\end{equation}

When computing the gradient $\nabla_{\bm{x}_t}\log p(\bm{y}|\bm{x}_t)$, both of DPS and $\Pi$GDM require the backpropagation through the pretrained diffusion model. As a result, they are computationally demanding and memory intensive. To address such issue, for VE SDE, \cite{jalal2021robust} assumed that likelihood $p(\bm{y}|\bm{x}_t)$ is a Gaussian $\mathcal{N}(\bm{y}|\bm{A}\bm{x}_t, (\sigma_y^2+\gamma_t^2)\bm{I})$ with $\gamma_t^2$ being the hyperparameters such that, with $t$ decreasing, the assumed noise level is decreasing towards 0. For VP SDE, \cite{meng2022diffusion} adopted free-of-pretrained-model approximation $p(\bm{x}_0|\bm{x}_t)\sim\mathcal{N}(\frac{1}{\alpha_t}\bm{x}_t,\frac{\sigma_t^2}{\alpha_t^2}\bm{I})$ with a crude uninformative prior of $p(\bm{x}_0)$, which leads to the approximation 
\begin{equation}\label{eq:dmps}
  p(\bm{y}|\bm{x}_t) \sim \mathcal{N}(\frac{1}{\alpha_t}\bm{A}\bm{x}_t, \sigma_y^2\bm{I}+\frac{\sigma_t^2}{\alpha_t^2}\bm{A}\bm{A}^T).
\end{equation}
The approximation~\eqref{eq:dmps} is very heuristic and its performance degrades much for highly ill-posed problems.

\section{Methodology}\label{sec:meth}

Unlike the previous approaches of diffusion posterior sampling for inverse problem, instead of implementing the \emph{post-guidance} following the unconditional sampling~\eqref{eq:con}, we consider modifying the intermediate step of unconditional sampling like the work~\cite{wang2022zero}. The existing methods, such as~\cite{chung2022diffusion,song2022pseudoinverse}, are built on standard DDPM or DDIM unconditional samplers, which loss the flexibility of controlling the injected noise level. To increase flexibility, we first introduce a general parameterized DDIM-like unconditional sampling scheme involving the expectation $\mathbb{E}[\bm{x}_0|\bm{x}_t]$, which contains DDPM and DDIM as two special cases. The transition from the unconditional sampling to conditional sampling means to replacing expectation $\mathbb{E}[\bm{x}_0|\bm{x}_t]$ by measurement-guided expectation $\mathbb{E}[\bm{x}_0|\bm{x}_t,\bm{y}]$ of distribution $p(\bm{x}_0|\bm{x}_t,\bm{y})$. With this perspective, we propose using the maximum-a-posteriori (MAP) surrogate to $\mathbb{E}[\bm{x}_0|\bm{x}_t,\bm{y}]$ instead of other subtle approximation to alleviate the computation issue of the existing DPS-like methods. As a comparison to~\cite{wang2022zero}, our method enables more friendly SVD-free implementation for noisy measurement.

\subsection{Conditional sampling driven by MAP approximation}\label{sec:main}

In the following, built on a pretrained diffusion model, we consider establishing the diffusion posterior sampling based on the more general parameterized DDIM-like sampling method. 

Similar to the non-Markov chain based sampling proposed in~\cite{song2021denoising}, we propose the following parameterized unconditional sampling scheme
\begin{equation}
  \label{eq:unc}
  \bm{x}_{t-1} = \alpha_{t-1}\mathbb{E}[\bm{x}_0|\bm{x}_t] + \sigma_{t-1}(\sqrt{1-\xi}\frac{\bm{x}_t-\alpha_t\mathbb{E}[\bm{x}_0|\bm{x}_t]}{\sigma_t}+\sqrt{\xi}\bm{z}),
\end{equation}
where $\mathbb{E}[\bm{x}_0|\bm{x}_t]$ is the expectation of density $p(\bm{x}_0|\bm{x}_t)$ and $0\leq\xi\leq 1$ is a hyperparameter. The role of hyperparameter $\xi$ serves as the purification~\cite{li2024decoupled} for ill-posed problem. Another function of the extra noise $\bm{z}$ helps improve the restoration performance of inverse problems, inspired from~\cite{xu2023restart}.

To transform the unconditional sampling~\eqref{eq:unc} to measurement-guided conditional sampling, we consider replacing the expectation estimation $\mathbb{E}[\bm{x}_0|\bm{x}_t]$ by measurement-aware expectation $\mathbb{E}[\bm{x}_0|\bm{x}_t,\bm{y}]$. The resulting generation scheme is
\begin{equation}
  \label{eq:cond}
  \bm{x}_{t-1} = \alpha_{t-1}\mathbb{E}[\bm{x}_0|\bm{x}_t,\bm{y}] + \sigma_{t-1}(\sqrt{1-\xi}\frac{\bm{x}_t-\alpha_t\mathbb{E}[\bm{x}_0|\bm{x}_t,\bm{y}]}{\sigma_t}+\sqrt{\xi}\bm{z}).
\end{equation}

Equation~\eqref{eq:uncs} provided the common estimate $\hat{\bm{x}}_0(\bm{x}_t)\simeq \mathbb{E}[\bm{x}_0|\bm{x}_t]$. With the estimate, scheme~\eqref{eq:unc} resembles the DDIM and DDPM as special cases.

To develop a practical conditional generation scheme of~\eqref{eq:cond}, the remaining thing is to estimate $\mathbb{E}[\bm{x}_0|\bm{x}_t,\bm{y}]$. Suppose the joint distribution between $\bm{x}_0,\bm{x}_t,\bm{y}$ is given by $p(\bm{x}_0,\bm{y},\bm{x}_t)= p(\bm{x}_0)p(\bm{y}|\bm{x}_0)p(\bm{x}_t|\bm{x}_0)$. Hence, the mean of $p(\bm{x}_0|\bm{x}_t,\bm{y})$ can be computed using the following result.

\begin{theorem}[~\cite{peng2024imp}\label{thm:1}]
  Suppose the joint distribution between $\bm{x}_0,\bm{x}_t,\bm{y}$ is given by
  \begin{equation}
    p(\bm{x}_0,\bm{y},\bm{x}_t)= p(\bm{x}_0)p(\bm{y}|\bm{x}_0)p(\bm{x}_t|\bm{x}_0).
  \end{equation}
  Hence, the mean of $p(\bm{x}_0|\bm{x}_t,\bm{y})$
\begin{equation}\label{eq:res}
  \mathbb{E}[\bm{x}_0|\bm{x}_t,\bm{y}] = \mathbb{E}[\bm{x}_0|\bm{x}_t] + \frac{\sigma_t^2}{\alpha_t}\nabla_{\bm{x}_t}\log p(\bm{y}|\bm{x}_t).
\end{equation}
\end{theorem}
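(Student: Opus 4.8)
The plan is to reduce the claim to the unconditional Tweedie formula of Lemma~\ref{lem:1} by treating $\bm{y}$ as a fixed conditioning variable. First I would note that the assumed factorization $p(\bm{x}_0,\bm{y},\bm{x}_t)=p(\bm{x}_0)p(\bm{y}|\bm{x}_0)p(\bm{x}_t|\bm{x}_0)$ encodes the Markov structure $\bm{y}\to\bm{x}_0\to\bm{x}_t$, so that $p(\bm{x}_t|\bm{x}_0,\bm{y})=p(\bm{x}_t|\bm{x}_0)=\mathcal{N}(\bm{x}_t|\alpha_t\bm{x}_0,\sigma_t^2\bm{I})$. Conditioning on $\bm{y}$ throughout and applying Bayes' rule then gives
\begin{equation}
  p(\bm{x}_0|\bm{x}_t,\bm{y}) = \frac{p(\bm{x}_t|\bm{x}_0,\bm{y})\,p(\bm{x}_0|\bm{y})}{p(\bm{x}_t|\bm{y})} = \frac{p(\bm{x}_t|\bm{x}_0)\,p(\bm{x}_0|\bm{y})}{p(\bm{x}_t|\bm{y})},
\end{equation}
which has exactly the form of the unconditional denoising posterior, with the prior $p(\bm{x}_0)$ replaced by $p(\bm{x}_0|\bm{y})$ and with the identical Gaussian transition $p(\bm{x}_t|\bm{x}_0)$.

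Since the derivation of Tweedie's formula \eqref{eq:twee} only uses the Gaussian form of $p(\bm{x}_t|\bm{x}_0)$ and is valid for an arbitrary prior, I would apply Lemma~\ref{lem:1} verbatim to this $\bm{y}$-conditioned measure. This yields $\mathbb{E}[\bm{x}_0|\bm{x}_t,\bm{y}] = (\bm{x}_t+\sigma_t^2\nabla_{\bm{x}_t}\log p(\bm{x}_t|\bm{y}))/\alpha_t$, i.e.\ the unconditional expression with the marginal score $\nabla_{\bm{x}_t}\log p(\bm{x}_t)$ replaced by the conditional score $\nabla_{\bm{x}_t}\log p(\bm{x}_t|\bm{y})$. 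Subtracting the unconditional identity $\mathbb{E}[\bm{x}_0|\bm{x}_t] = (\bm{x}_t+\sigma_t^2\nabla_{\bm{x}_t}\log p(\bm{x}_t))/\alpha_t$ leaves $\frac{\sigma_t^2}{\alpha_t}(\nabla_{\bm{x}_t}\log p(\bm{x}_t|\bm{y}) - \nabla_{\bm{x}_t}\log p(\bm{x}_t))$. Finally, writing $\log p(\bm{x}_t|\bm{y}) = \log p(\bm{y}|\bm{x}_t) + \log p(\bm{x}_t) - \log p(\bm{y})$ and noting that $\log p(\bm{y})$ is constant in $\bm{x}_t$, the difference of scores collapses to $\nabla_{\bm{x}_t}\log p(\bm{y}|\bm{x}_t)$, which is precisely \eqref{eq:res}.

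As a self-contained alternative I would bypass Lemma~\ref{lem:1} and compute directly. Writing $\mathbb{E}[\bm{x}_0|\bm{x}_t,\bm{y}]$ as the ratio $\int \bm{x}_0\,p(\bm{x}_0)p(\bm{y}|\bm{x}_0)p(\bm{x}_t|\bm{x}_0)\,d\bm{x}_0$ over $Z:=p(\bm{x}_t,\bm{y})=\int p(\bm{x}_0)p(\bm{y}|\bm{x}_0)p(\bm{x}_t|\bm{x}_0)\,d\bm{x}_0$, I would exploit the Gaussian log-derivative identity $\alpha_t\bm{x}_0\,p(\bm{x}_t|\bm{x}_0) = \bm{x}_t\,p(\bm{x}_t|\bm{x}_0) + \sigma_t^2\nabla_{\bm{x}_t}p(\bm{x}_t|\bm{x}_0)$. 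Multiplying by $p(\bm{x}_0)p(\bm{y}|\bm{x}_0)$, integrating, and interchanging gradient with the $\bm{x}_0$-integral gives $\alpha_t\mathbb{E}[\bm{x}_0|\bm{x}_t,\bm{y}] = \bm{x}_t + \sigma_t^2\nabla_{\bm{x}_t}\log Z$, after which $\nabla_{\bm{x}_t}\log p(\bm{x}_t,\bm{y}) = \nabla_{\bm{x}_t}\log p(\bm{y}|\bm{x}_t) + \nabla_{\bm{x}_t}\log p(\bm{x}_t)$ reproduces \eqref{eq:res}.

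The routine parts are Bayes' rule and the Gaussian score identity; the only genuine points requiring care are the two structural facts that make them legitimate. The first is the conditional independence $p(\bm{x}_t|\bm{x}_0,\bm{y})=p(\bm{x}_t|\bm{x}_0)$, which is exactly what the given factorization supplies and is what licenses reusing the Gaussian likelihood after conditioning on $\bm{y}$; the second is the interchange of $\nabla_{\bm{x}_t}$ with the integral over $\bm{x}_0$ (equivalently, differentiating under the integral sign in $Z$), which I expect to be the main technical obstacle and which holds under mild regularity and integrability assumptions on the densities, standard in this Tweedie-type setting.
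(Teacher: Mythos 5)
Your proposal is correct. Note that the paper itself gives no proof of this statement: Theorem~\ref{thm:1} is imported by citation from \cite{peng2024imp}, so there is no in-paper argument to compare against. Both of your routes are valid and fill the omitted proof. The first route correctly identifies that the assumed factorization yields the conditional independence $p(\bm{x}_t|\bm{x}_0,\bm{y})=p(\bm{x}_t|\bm{x}_0)$, so that after conditioning on $\bm{y}$ the problem has exactly the structure of Lemma~\ref{lem:1} with prior $p(\bm{x}_0|\bm{y})$; since the Tweedie identity depends only on the Gaussian transition and not on the prior, substituting the conditional score and then expanding $\nabla_{\bm{x}_t}\log p(\bm{x}_t|\bm{y})=\nabla_{\bm{x}_t}\log p(\bm{y}|\bm{x}_t)+\nabla_{\bm{x}_t}\log p(\bm{x}_t)$ gives \eqref{eq:res}. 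Your second, direct computation via the identity $\alpha_t\bm{x}_0\,p(\bm{x}_t|\bm{x}_0)=\bm{x}_t\,p(\bm{x}_t|\bm{x}_0)+\sigma_t^2\nabla_{\bm{x}_t}p(\bm{x}_t|\bm{x}_0)$ is the more self-contained of the two, since it simultaneously reproves the unconditional Tweedie formula as the special case where the factor $p(\bm{y}|\bm{x}_0)$ is dropped, and it makes explicit the only real technical hypothesis (differentiation under the integral sign), which you correctly flag as holding under the standard regularity assumed throughout this setting.
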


The conditional generation scheme~\eqref{eq:cond} can be reformulated as two consecutive iterations 
\begin{equation}\label{eq:eq5}
  \begin{aligned}
    \bm{x}_{t-1} &= \alpha_{t-1}\mathbb{E}[\bm{x}_0|\bm{x}_t] + \sigma_{t-1}(\sqrt{1-\xi}\frac{\bm{x}_t-\alpha_t\mathbb{E}[\bm{x}_0|\bm{x}_t]}{\sigma_t}+\sqrt{\xi}\bm{z})\\
    \bm{x}_{t-1} &=\bm{x}_{t-1} +  \left(\alpha_{t-1}-\frac{\sigma_{t-1}\alpha_t\sqrt{1-\xi}}{\sigma_t}\right)\cdot (\mathbb{E}[\bm{x}_0|\bm{x}_t,\bm{y}]-\mathbb{E}[\bm{x}_0|\bm{x}_t]).
  \end{aligned}
\end{equation}

Using the equality~\eqref{eq:res}, the measurement guidance step of~\eqref{eq:eq5} requires the estimation of $\nabla_{\bm{x}_t}\log p(\bm{y}|\bm{x}_t)$, as $\mathbb{E}[\bm{x}_0|\bm{x}_t,\bm{y}]-\mathbb{E}[\bm{x}_0|\bm{x}_t]=\frac{\sigma_t^2}{\alpha_t}\nabla_{\bm{x}_t}\log p(\bm{y}|\bm{x}_t)$. If we follow the estimations from~\cite{chung2022diffusion,song2022pseudoinverse}, the solving method is still computationally demanding. To address the issue, we propose a less precise surrogate estimate to $\mathbb{E}[\bm{x}_0|\bm{x}_t,\bm{y}]$ in~\eqref{eq:eq5}.

Instead of using the right hand side of~\eqref{eq:res} to predict the expectation $\mathbb{E}[\bm{x}_0|\bm{x}_t,\bm{y}]$ of $p(\bm{x}_0|\bm{x}_t,\bm{y})$, we consider the maximum likelihood estimation of $p(\bm{x}_0|\bm{x}_t,\bm{y})$ as a proxy of $\mathbb{E}[\bm{x}_0|\bm{x}_t,\bm{y}]$. In other words, we take the maximum mode of $p(\bm{x}_0|\bm{x}_t,\bm{y})$ as an approximation to the expectation. Though there exists approximation error between the mean and the maximum mode, such accumulation error can be rectified by the randomness of $\bm{z}$ for the per iteration~\cite{xu2023restart}, where the authors pointed out that the approximation error of the score function will dominate when the diffusion step is large and the stochastic nature of the general sampling scheme helps forget the accumulated errors from the previous diffusion steps. Compared to deterministic sampling scheme, the injected pure Gaussian noise of general sampling scheme helps the accumulated error from the score function error undergoes contraction by a constant factor. For the conditional generation, the accumulated error from the conditional score function error shall be contracted by the injected noise per iteration as well.

Using Bayes' rule, we have
\begin{equation}
  p(\bm{x}_0|\bm{x}_t,\bm{y})\propto \frac{p(\bm{y}|\bm{x}_0)p(\bm{x}_0|\bm{x}_t)p(\bm{x}_t)}{p(\bm{x}_t,\bm{y})}.
\end{equation}
The distribution $p(\bm{x}_0|\bm{x}_t)$ is intractable, we also use a Gaussian distribution to approximate it. From the moment estimation~\eqref{eq:twee} in the Tweedie's formula, we suppose that $p(\bm{x}_0|\bm{x}_t)\sim\mathcal{N}(\bm{x}_0|\hat{\bm{x}}_0(\bm{x}_t), \frac{\sigma_t^2}{\alpha_t^2}\bm{I})$. Therefore, the maximum likelihood leads to the optimization problem:
\begin{equation}
  \label{eq:ma}
  \min_{\bm{x}}\quad \frac{1}{2}\norm{\bm{y}-\bm{A}\bm{x}}_2^2 + \frac{\sigma_y^2}{2\sigma_t^2/\alpha_t^2}\norm{\bm{x}-\hat{\bm{x}}_0(\bm{x}_t)}_2^2.
\end{equation}
In this way, we are required to solve~\eqref{eq:ma} as the core of conditional sampling method. The optimization~\eqref{eq:ma} resembles the traditional regularized approach for inverse problem, and the prior is diffusion model based. This optimization problem~\eqref{eq:ma} is different from the one in~\cite{song2021solving}, where there are two corruption paths considered.

For linear problem, we have the closed-form solution to the above optimization
\begin{equation}
  \bm{x}:=(\sigma_y^2\bm{I}+\frac{\sigma_t^2}{\alpha_t^2}\bm{A}^T\bm{A})^{-1}(\sigma_y^2\hat{\bm{x}}_0(\bm{x}_t)+\frac{\sigma_t^2}{\alpha_t^2}\bm{A}^T\bm{y}).
\end{equation}

Hence using the estimator $\bm{x}\simeq\text{MAP}[\bm{x}_0|\bm{x}_t,\bm{y}]\simeq \mathbb{E}[\bm{x}_0|\bm{x}_t,\bm{y}]$ and the Tweedie's formula for $\mathbb{E}[\bm{x}_0|\bm{x}_t])$, we have the estimation
\begin{equation}
    \mathbb{E}[\bm{x}_0|\bm{x}_t,\bm{y}]-\mathbb{E}[\bm{x}_0|\bm{x}_t]\simeq \frac{\sigma_t^2}{\alpha_t^2}(\sigma_y^2\bm{I}+\frac{\sigma_t^2}{\alpha_t^2}\bm{A}^T\bm{A})^{-1}\bm{A}^T(\bm{y}-\bm{A}\hat{\bm{x}}_0(\bm{x}_t)).
  \end{equation}
  Hence the \emph{post measurement guidance} step is
  \begin{equation}
  \label{eq:ours}
  \bm{x}_{t-1} = \bm{x}_{t-1} + \textcolor{blue}{\alpha_{t-1}\left(\frac{\sigma_t}{\alpha_t}-\sqrt{1-\xi}\frac{\sigma_{t-1}}{\alpha_{t-1}}\right)\sigma_t}\left[\frac{1}{\alpha_t}(\sigma_y^2\bm{I}+\frac{\sigma_t^2}{\alpha_t^2}\bm{A}^T\bm{A})^{-1}\bm{A}^T(\bm{y}-\bm{A}\hat{\bm{x}}_0(\bm{x}_t))\right].
\end{equation}

Using $\mathbb{E}[\bm{x}_0|\bm{x}_t,\bm{y}]-\mathbb{E}[\bm{x}_0|\bm{x}_t]=\frac{\sigma_t^2}{\alpha_t}\nabla_{\bm{x}_t}\log p(\bm{y}|\bm{x}_t)$, our MAP-based expectation proxy actually derives a novel approximation
\begin{equation}
  \nabla_{x_t}\log p(\bm{y}|\bm{x}_t)\simeq \frac{1}{\alpha_t}(\sigma_y^2\bm{I}+\frac{\sigma_t^2}{\alpha_t^2}\bm{A}^T\bm{A})^{-1}\bm{A}^T(\bm{y}-\bm{A}\hat{\bm{x}}_0(\bm{x}_t)).
\end{equation}

Note that the matrix $\bm{B}:=(\alpha_t^2\sigma_y^2\bm{I}+\sigma_t^2\bm{A}^T\bm{A})$ is positive semi-definite. To efficiently compute the gradient term, we can leverage the conjugate gradient (CG) to obtain the solution. Unlike DDRM~\cite{kawar2022denoising}, which depends on the SVD of $\bm{A}$, our method does not require the SVD. It widens the applicability for a general inverse problem. 

If the singular-value decomposition $\bm{A}=\bm{U}\bm{\Sigma}\bm{V}^T$ is available, where $\bm{U}\in\mathbb{R}^{m\times m},\bm{\Sigma}\in\mathbb{R}^{m\times n}$ and $\bm{V}\in\mathbb{R}^{n\times n}$, the matrix inverse $\bm{B}^{-1}$ has the exact expression:
\begin{equation}
  \bm{B}^{-1} = \bm{V}\diag\left(\frac{1}{\alpha_t^2\sigma_y^2 + \sigma_t^2\bm{s}^2}\right)\bm{V}^T,
\end{equation}
where $\bm{s}=[s_1,\ldots,s_m]$ and $s_i$'s are the singular values.

\subsection{Connection to existing approaches}

\paragraph{Comparison to $\Pi$GDM} Following the $\Pi$GDM approximation and the hyperparameter configuration in~\cite{meng2022diffusion}, we know that
\begin{equation}
  p(\bm{y}|\bm{x}_t) \sim \mathcal{N}(\bm{A}\hat{\bm{x}}_0(\bm{x}_t), \sigma_y^2\bm{I}+\frac{\sigma_t^2}{\alpha_t^2}\bm{A}\bm{A}^T).
\end{equation}
Hence, we can compute the gradient
\begin{equation}
  \label{eq:full}
  \begin{aligned}
    \nabla_{\bm{x}_t}^{\text{$\Pi$GDM}}\log p(\bm{y}|\bm{x}_t) & \simeq \left(\frac{\partial \hat{\bm{x}}_0(\bm{x}_t)}{\partial \bm{x}_t}\right)^T\bm{A}^T(\sigma_y^2\bm{I}+\frac{\sigma_t^2}{\alpha_t^2}\bm{A}\bm{A}^T)^{-1}(\bm{A}\hat{\bm{x}}_0(\bm{x}_t)-\bm{y})\\
    & = \left(\frac{1}{\alpha_t}+\frac{\sigma_t^2}{\alpha_t}\frac{\partial \bm{s}_{\theta}(\bm{x}_t,t)}{\partial \bm{x}_t}\right)^T\bm{A}^T(\sigma_y^2\bm{I}+\frac{\sigma_t^2}{\alpha_t^2}\bm{A}\bm{A}^T)^{-1}(\bm{A}\hat{\bm{x}}_0(\bm{x}_t)-\bm{y}).
  \end{aligned}
\end{equation}
As a comparison to~\eqref{eq:ours}, $\Pi$GDM requires computation of the backpropagation through the diffusion model, which is memory intensive and computationally demanding. It is suggested that the term $\frac{\partial \hat{\bm{x}}_0(\bm{x}_t)}{\partial \bm{x}_i}$ leads to manifold preservation~\cite{chung2022improving}, which is helpful for the image restoration. In our experiment, the extra term is not necessary for image restoration. And the derivation may lead to unstable performance.

\paragraph{Comparison to Meng's DMPS approach} The update scheme of our method is very similar to the work of Meng and Kabashima~\cite{meng2022diffusion}. In the following, we compare our approach to the work. The measurement guidance gradient estimation of Meng and Kabashima is
\begin{equation}
  \label{eq:meng}
   \nabla_{\bm{x}_t}^{\text{Meng}}\log p(\bm{y}|\bm{x}_t) =\frac{1}{\alpha_t}\bm{A}^T(\sigma_y^2\bm{I}+\frac{\sigma_t^2}{\alpha_t^2}\bm{A}\bm{A}^T)^{-1}(\bm{A}(\frac{\bm{x}_t}{\alpha_t})-\bm{y}).
\end{equation}
It is highly related to our scheme~\eqref{eq:ours}. Comparing eqn~\eqref{eq:meng} and eqn~\eqref{eq:ours}, the only difference is the evaluation point to perform the gradient calculation. Except for the evaluated point, the equivalence can be achieved using the following lemma.

\begin{lemma}
  If $\bm{A}\in\mathbb{R}^{m\times n}$, then the following equality
  \begin{equation}
    (\alpha_t^2\sigma_y^2\bm{I}+\sigma_t^2\bm{A}^T\bm{A})^{-1}\bm{A}^T = \bm{A}^T(\alpha_t^2\sigma_y^2\bm{I}+\sigma_t^2\bm{A}\bm{A}^T)^{-1}
  \end{equation}
  holds.
\end{lemma}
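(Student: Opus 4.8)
The plan is to prove this by the standard \emph{push-through} identity, reducing the whole statement to the associativity of matrix multiplication together with the invertibility of the two factors. To lighten notation I would abbreviate $c := \alpha_t^2\sigma_y^2$ and $d := \sigma_t^2$, and write $\bm{I}_n$, $\bm{I}_m$ for the identity matrices of the appropriate sizes: the left-hand side involves $\bm{A}^T\bm{A}\in\mathbb{R}^{n\times n}$, so the identity there is $\bm{I}_n$, whereas the right-hand side involves $\bm{A}\bm{A}^T\in\mathbb{R}^{m\times m}$, so the identity there is $\bm{I}_m$. Keeping these two sizes distinct is the only genuine bookkeeping point.

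First I would establish the commutation identity
\[
  (c\bm{I}_n + d\bm{A}^T\bm{A})\,\bm{A}^T \;=\; \bm{A}^T\,(c\bm{I}_m + d\bm{A}\bm{A}^T).
\]
This follows by expanding each side: both equal $c\bm{A}^T + d\,\bm{A}^T\bm{A}\bm{A}^T$, where the only manipulation is the associativity of the triple product $\bm{A}^T\bm{A}\bm{A}^T$. This algebraic observation is the heart of the lemma.

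Next I would note that both $c\bm{I}_n + d\bm{A}^T\bm{A}$ and $c\bm{I}_m + d\bm{A}\bm{A}^T$ are invertible. Indeed $\bm{A}^T\bm{A}$ and $\bm{A}\bm{A}^T$ are positive semidefinite, while $c=\alpha_t^2\sigma_y^2>0$ and $d=\sigma_t^2>0$, so each matrix is positive definite and hence nonsingular; these are precisely the inverses already appearing in the guidance formulas, so their existence is implicit throughout.

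Finally I would left-multiply the commutation identity by $(c\bm{I}_n + d\bm{A}^T\bm{A})^{-1}$ and right-multiply by $(c\bm{I}_m + d\bm{A}\bm{A}^T)^{-1}$. On the left this cancels the outer factor $(c\bm{I}_n + d\bm{A}^T\bm{A})$ and attaches $(c\bm{I}_m + d\bm{A}\bm{A}^T)^{-1}$ to $\bm{A}^T$; on the right it does the reverse, leaving exactly
\[
  (c\bm{I}_n + d\bm{A}^T\bm{A})^{-1}\bm{A}^T \;=\; \bm{A}^T(c\bm{I}_m + d\bm{A}\bm{A}^T)^{-1},
\]
which is the claim after reinstating $c$ and $d$. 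I do not expect any real obstacle here: once the commutation identity is written down, the result is a routine consequence, and the only thing to watch is that the two identity matrices live in different dimensions.
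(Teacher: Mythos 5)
Your proof is correct. It takes a genuinely different route from the paper: the paper simply invokes the singular-value decomposition $\bm{A}=\bm{U}\bm{\Sigma}\bm{V}^T$ and checks the equality by reducing both sides to diagonal manipulations in the spectral domain, whereas you use the push-through identity, starting from the commutation relation
\[
  (c\bm{I}_n + d\bm{A}^T\bm{A})\,\bm{A}^T = c\bm{A}^T + d\,\bm{A}^T\bm{A}\bm{A}^T = \bm{A}^T\,(c\bm{I}_m + d\bm{A}\bm{A}^T)
\]
and multiplying by the two inverses. Your argument is more elementary and self-contained: it needs no factorization of $\bm{A}$, only associativity and the invertibility of the two regularized Gram matrices, which you correctly justify via positive definiteness (and which is in any case presupposed by the statement, since the inverses appear in it). The SVD route the paper takes is shorter to state given that the surrounding discussion already introduces the SVD of $\bm{A}$ for the exact expression of $\bm{B}^{-1}$, but it implicitly carries the same content, since verifying $(c\bm{I}+d\bm{\Sigma}^T\bm{\Sigma})^{-1}\bm{\Sigma}^T=\bm{\Sigma}^T(c\bm{I}+d\bm{\Sigma}\bm{\Sigma}^T)^{-1}$ is just your push-through identity specialized to diagonal factors. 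Your care in distinguishing $\bm{I}_n$ from $\bm{I}_m$ is a small but genuine improvement in precision over the paper's notation, which writes a bare $\bm{I}$ on both sides.
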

\begin{proof}
  Using the SVD of $\bm{A}$, it is straightforward to check the equality of the two matrices.
\end{proof}

\paragraph{Comparison to DDRM~\cite{kawar2022denoising} and DDNM~\cite{wang2022zero}}
Both of DDNM and DDRM implemented the algorithms using the SVD of the linear operator. Unlike our \emph{unified} expression to deal with noisy inverse problems, for noisy cases, DDRM and DDNM$+$ are eigenspace-specific and \emph{intricate}, as the iteration varies for each eigenvector. For noiseless cases, \ie, $\sigma_y=0$, the implementations of DDRM and DDNM degenerate to a compact expression. %

With the same philosophy, when $\sigma_y=0$, the maximum likelihood estimation~\eqref{eq:ma} solves
\begin{equation}
  \min_{\bm{x}}\quad \frac{1}{2}\norm{\bm{x}-\hat{\bm{x}}_0(\bm{x}_t)}_2^2,\quad \text{s.t. } \bm{y}=\bm{A}\bm{x}. 
\end{equation}
Hence the estimate $\bm{x}$ of $\mathbb{E}[\bm{x}_0|\bm{x}_t,\bm{y}]$ is the projection of $\hat{\bm{x}}_0(\bm{x}_t)$ onto the set $\{\bm{x}\mid \bm{y}=\bm{A}\bm{x}\}$. The solution is
\begin{equation}
  \bm{x} = \hat{\bm{x}}_0(\bm{x}_t) + \bm{A}^{\dagger}(\bm{y}-\bm{A}\hat{\bm{x}}_0(\bm{x}_t)). 
\end{equation}
Hence we have the measurement guidance iteration
\begin{equation}
  \label{eq:ddnm}
  \bm{x}_{t-1} = \bm{x}_{t-1} +\left(\alpha_{t-1}-\frac{\sigma_{t-1}\alpha_t\sqrt{1-\xi}}{\sigma_t}\right)\bm{A}^{\dagger}(\bm{y}-\bm{A}\hat{\bm{x}}_0(\bm{x}_t)).
\end{equation}
As a comparison to~\eqref{eq:ours}, the sampling method~\eqref{eq:ddnm} is a special case of~\eqref{eq:ours} with $\sigma_y=0$.

With configuration $\eta_b=1$, the spectral inpainting based DDRM iteration~\cite{kawar2022denoising} is
\begin{equation*}
  \bm{x}_{t-1} = \bm{x}_{t-1} + \bm{A}^{\dagger}(\alpha_t\bm{y}\textcolor{blue}{+\sigma_t\bm{z}}-\bm{A}\bm{x}_t) = \bm{x}_{t-1} + \alpha_t\bm{A}^{\dagger}\left(\bm{y}-\bm{A}\hat{\bm{x}}_0(\bm{x}_t)+\frac{\sigma_t}{\alpha_t}\bm{z}+\bm{A}(\frac{\sigma_t^2}{\alpha_t}\bm{s}_{\theta}(\bm{x}_t,t))\right).
\end{equation*}
Compared to~\eqref{eq:ours}, there are two differences: the extra noise term $\frac{\sigma_t}{\alpha_t}\bm{z}+\bm{A}(\frac{\sigma_t^2}{\alpha_t}\bm{s}_{\theta}(\bm{x}_t,t))$ and the stepsize.

\subsection{Extension to special non-linear case}

Considering an extension, we leverage specific approximation to generalize the formula~\eqref{eq:ours} for some specific non-linear problems, similar to the work~\cite{kawar2022jpeg}.

For the specific case $\bm{A}\bm{A}^T=\bm{I}$, equation~\eqref{eq:ours} leads to
\begin{equation}
  \label{eq:ls}
  \bm{x}_{t-1} = \bm{x}_{t-1} + \textcolor{blue}{\alpha_{t-1}\left(\frac{\sigma_t}{\alpha_t}-\sqrt{1-\xi}\frac{\sigma_{t-1}}{\alpha_{t-1}}\right)\sigma_t}\frac{1}{\sigma_y^2+\sigma_t^2/\alpha_t^2}\left[\frac{1}{\alpha_t}(\bm{A}^T(\bm{y}-\bm{A}\hat{\bm{x}}_0(\bm{x}_t)))\right].
\end{equation}
 Analogously, for some non-linear measurement function $h:\mathbb{R}^n\to\mathbb{R}^m$, we may find another function $h^T:\mathbb{R}^m\to\mathbb{R}^n$ such that $h^T(h(\bm{x}))\simeq \bm{x}$ for all $\bm{x}\in\mathbb{R}^n$. Two non-linear examples, including the quantization and JPEG compression, exhibit such properties, as discussed in~\cite{kawar2022jpeg}. For both of them, let $h(\bm{x})$ be the degradation operator, and we define $h^T(\bm{x})=\bm{x}$, eqn.~\eqref{eq:ls} can solve such specific non-linear problems.

\subsection{Theoretical analysis of our estimation}

The error analysis of our MAP approximation to $\mathbb{E}[\bm{x}_0|\bm{x}_t,\bm{y}]$ for an intractable $p(\bm{x}_0|\bm{x}_t,\bm{y})$ is impossible. Assume $p(\bm{y}|\bm{x}_0)$ is Gaussian, then our MAP-based method can be recognized as another approximation to $\nabla_{x_t}\log p(\bm{y}|\bm{x}_t)$ as we have presented. And our MAP-based estimate is highly related to the approximation $\nabla_{x_t}\log p^{\text{app}}(\bm{y}|\bm{x}_t)$, where $p^{\text{app}}(\bm{y}|\bm{x}_t)=\int p(\bm{y}|\bm{x}_0)p^{\text{app}}(\bm{x}_0|\bm{x}_t)d\bm{x}_0$ and $p^{\text{app}}(\bm{x}_0|\bm{x}_t)$ is an estimate of $p(\bm{x}_0|\bm{x}_t)$. Assume when the distance $|p^{\text{app}}(\bm{y}|\bm{x}_t)-p(\bm{y}|\bm{x}_t)|\leq \epsilon$ uniformly for $\bm{x}_t\in\mathbb{R}^d$,~\footnote{This assumption is not trivial, for a bounded uniformly continuous and everywhere differentiable function, its gradient function is unnecessarily bounded.} there exists a constant $M>0$, such that
\begin{equation*}
  \norm{\nabla_{x_t}\log p^{\text{app}}(\bm{y}|\bm{x}_t)-\nabla_{x_t}\log p(\bm{y}|\bm{x}_t)}_2\leq M\epsilon.
\end{equation*}
The remaining thing is to characterize the distance $|p^{\text{app}}(\bm{y}|\bm{x}_t)-p(\bm{y}|\bm{x}_t)|$.

\begin{theorem}\label{thm:2}
  Let $p^{\text{mid}}(\bm{x}_0|\bm{x}_t)=\mathcal{N}(\bm{x}_0;\mathbb{E}[\bm{x}_0|\bm{x}_t],\mathbb{V}[\bm{x}_0|\bm{x}_t])$ be the mean-field approximation to the intractable $p(\bm{x}_0|\bm{x}_t)$. Suppose the total variation distance $\text{TV}(p^{\text{mid}}(\bm{x}_0|\bm{x}_t),p(\bm{x}_0|\bm{x}_t))\leq \epsilon_{\text{MF}}$. For a given approximation density $p^{\text{app}}(\bm{x}_0|\bm{x}_t)=\mathcal{N}(\bm{x}_0;\mu,\Sigma)$ to $p^{\text{mid}}(\bm{x}_0|\bm{x}_t)$, suppose that $\text{TV}(p^{\text{app}}(\bm{x}_0|\bm{x}_t),p^{\text{mid}}(\bm{x}_0|\bm{x}_t))\leq \frac{1}{600}$, then we have
  \begin{equation}
    \text{TV}(p^{\text{app}},p^{\text{mid}})\leq \frac{1}{\sqrt{2}}\left(\norm{\Sigma^{-1/2}\mathbb{V}[\bm{x}_0|\bm{x}_t]\Sigma^{-1/2}-I_d}_F + \norm{\Sigma^{-1/2}(\mu-\mathbb{E}[\bm{x}_0|\bm{x}_t])}_2\right).
  \end{equation}
  Furthermore, we have
  \begin{equation}
    |p^{\text{app}}-p|\leq 2(\epsilon_{\text{MF}}+\frac{1}{\sqrt{2}}\norm{\Sigma^{-1/2}\mathbb{V}[\bm{x}_0|\bm{x}_t]\Sigma^{-1/2}-I_d}_F + \norm{\Sigma^{-1/2}(\mu-\mathbb{E}[\bm{x}_0|\bm{x}_t])}_2)\cdot \int p(\bm{y}|\bm{x}_0) d\bm{x}_0.
  \end{equation}
\end{theorem}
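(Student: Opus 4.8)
The plan is to establish the two displayed bounds in turn, treating the second as a consequence of the first together with the triangle inequality for total variation. For the first bound I would exploit that both $p^{\text{app}}(\bm{x}_0|\bm{x}_t)=\mathcal{N}(\mu,\Sigma)$ and $p^{\text{mid}}(\bm{x}_0|\bm{x}_t)=\mathcal{N}(\mathbb{E}[\bm{x}_0|\bm{x}_t],\mathbb{V}[\bm{x}_0|\bm{x}_t])$ are Gaussian and route the total variation through the Kullback--Leibler divergence by Pinsker's inequality, $\text{TV}(p^{\text{mid}},p^{\text{app}})\leq\sqrt{\tfrac12\,\text{KL}(p^{\text{mid}}\|p^{\text{app}})}$. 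Substituting the closed form of the Gaussian KL and setting $W:=\Sigma^{-1/2}\mathbb{V}[\bm{x}_0|\bm{x}_t]\Sigma^{-1/2}$, the covariance contribution equals $\sum_i(\lambda_i-1-\ln\lambda_i)$ with $\lambda_i$ the eigenvalues of $W$, while the mean contribution is exactly $\tfrac12\norm{\Sigma^{-1/2}(\mu-\mathbb{E}[\bm{x}_0|\bm{x}_t])}_2^2$; here one uses $\sum_i(\lambda_i-1)^2=\norm{W-I_d}_F^2$.

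The crux is the scalar estimate $\lambda-1-\ln\lambda\leq\tfrac12(\lambda-1)^2$, which holds only when $\lambda$ lies in a fixed neighborhood of $1$ and in fact fails as $\lambda\to 0^+$. This is precisely where the hypothesis $\text{TV}(p^{\text{app}},p^{\text{mid}})\leq\tfrac{1}{600}$ enters: invoking a known lower bound on the total variation between two Gaussians (of Devroye--Mehrabian--Reddad type), a small total variation forces both $\norm{W-I_d}_F$ and the normalized mean gap to be small, hence confines every $\lambda_i$ to an interval around $1$ on which the quadratic bound is valid with a constant only slightly above $\tfrac12$. Summing then gives $\sum_i(\lambda_i-1-\ln\lambda_i)\leq\tfrac12\norm{W-I_d}_F^2$ up to this constant, and feeding this into Pinsker together with $\sqrt{a+b}\leq\sqrt{a}+\sqrt{b}$ produces the first inequality; the gap between the coefficients coming out of Pinsker (namely $\tfrac{1}{2\sqrt2}$ on the Frobenius term and $\tfrac12$ on the mean term) and the stated $\tfrac{1}{\sqrt2}$ provides exactly the slack that absorbs the constant.

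For the second bound I would expand the likelihood difference using the factorization $p(\bm{y}|\bm{x}_t)=\int p(\bm{y}|\bm{x}_0)p(\bm{x}_0|\bm{x}_t)\,d\bm{x}_0$, so that $p^{\text{app}}(\bm{y}|\bm{x}_t)-p(\bm{y}|\bm{x}_t)=\int p(\bm{y}|\bm{x}_0)\bigl(p^{\text{app}}(\bm{x}_0|\bm{x}_t)-p(\bm{x}_0|\bm{x}_t)\bigr)\,d\bm{x}_0$. Bounding this by the variational form of total variation, with the nonnegative likelihood $p(\bm{y}|\bm{x}_0)$ acting as the test weight, yields the factor $\int p(\bm{y}|\bm{x}_0)\,d\bm{x}_0$ times $2\,\text{TV}(p^{\text{app}},p)$. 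Finally the triangle inequality $\text{TV}(p^{\text{app}},p)\leq\text{TV}(p^{\text{app}},p^{\text{mid}})+\text{TV}(p^{\text{mid}},p)$ lets me substitute the first inequality for the first term and the hypothesis $\text{TV}(p^{\text{mid}},p)\leq\epsilon_{\text{MF}}$ for the second, giving the claimed estimate.

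I expect the main obstacle to be the spectral confinement step. The quadratic control of $\lambda-1-\ln\lambda$ genuinely breaks down for small eigenvalues, so the argument depends entirely on turning the total-variation hypothesis into a neighborhood of $I_d$ for $W$ via the lower bound on Gaussian total variation, and then verifying that the worst-case ratio $(\lambda-1-\ln\lambda)/(\lambda-1)^2$ over the resulting interval stays small enough for the coefficient $\tfrac{1}{\sqrt2}$ to remain valid. Tracking the numerical constant $\tfrac{1}{600}$ carefully through this chain is the delicate part; the remaining manipulations are routine.
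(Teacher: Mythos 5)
Your handling of the second displayed inequality coincides with the paper's own proof: the same factorization $p(\bm{y}|\bm{x}_t)=\int p(\bm{y}|\bm{x}_0)p(\bm{x}_0|\bm{x}_t)\,d\bm{x}_0$, the same insertion of $p^{\text{mid}}$ via the triangle inequality, and the same conversion of the two $L^1$ gaps into total variations before substituting $\epsilon_{\text{MF}}$ and the first bound. Where you genuinely diverge is the first inequality: the paper does not prove it at all, but simply invokes the quoted Gaussian TV upper bound (its Lemma on two multivariate Gaussians, citing Arbas, Ashtiani and Liaw), whereas you reconstruct that bound from scratch via Pinsker's inequality, the closed-form Gaussian KL, and a spectral-confinement step. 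Your reconstruction is sound: writing $W=\Sigma^{-1/2}\mathbb{V}[\bm{x}_0|\bm{x}_t]\Sigma^{-1/2}$ with eigenvalues $\lambda_i$, the hypothesis $\text{TV}\leq\tfrac{1}{600}$ combined with the companion \emph{lower} bound $\text{TV}\geq\tfrac{1}{200}\min(1,\Delta)$ forces $\Delta\leq\tfrac13$, hence every $\lambda_i\in[\tfrac23,\tfrac43]$, and on that interval $(\lambda-1-\ln\lambda)/(\lambda-1)^2\leq 0.65$, well within the slack between the Pinsker coefficients $\tfrac12\sqrt{c}$ and $\tfrac12$ and the stated $\tfrac{1}{\sqrt2}$. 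You correctly located both where the constant $\tfrac{1}{600}$ enters and why the naive quadratic control of $\lambda-1-\ln\lambda$ would otherwise fail near $\lambda=0$. The trade-off is that the paper's route is a one-line citation while yours is self-contained in appearance but still imports the Gaussian TV lower bound, so it does not actually remove the dependence on the Devroye--Mehrabian--Reddad/Arbas et al.\ machinery; it only relocates it.

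One caveat you share with the paper: the final step that converts $\int p(\bm{y}|\bm{x}_0)\,|q_1(\bm{x}_0)-q_2(\bm{x}_0)|\,d\bm{x}_0$ into $2\,\text{TV}(q_1,q_2)\cdot\int p(\bm{y}|\bm{x}_0)\,d\bm{x}_0$ is not an instance of the variational characterization of total variation. Taking $f=p(\bm{y}|\cdot)$ as a bounded test function yields the factor $\sup_{\bm{x}_0}p(\bm{y}|\bm{x}_0)$, not $\int p(\bm{y}|\bm{x}_0)\,d\bm{x}_0$; obtaining the integral as the prefactor would instead require an $L^\infty$ bound on $|q_1-q_2|$ by $2\,\text{TV}$, which is false in general. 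This is a defect of the theorem's displayed bound as written rather than something your argument introduces, but your appeal to ``the variational form'' does not deliver the stated factor either, so if you carry this out in detail you should either replace the prefactor by $\sup_{\bm{x}_0}p(\bm{y}|\bm{x}_0)$ or supply the missing justification.
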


Before we present the proof, we cite the known result of the upper bound of the TV distance of two Gaussians.
\begin{lemma}[Upper bound of TV distance of two multivariate Gaussians~\cite{arbas2023polynomial}]\label{lem:2}
   Let $\mu_1,\mu_2\in\mathbb{R}^d$ and $\Sigma_1,\Sigma_2\in\mathbb{R}^{d\times d}$ be positive-definite matrices. Suppose $\text{TV}(\mathcal{N}(\mu_1,\Sigma_1),\mathcal{N}(\mu_2,\Sigma_2))<\frac{1}{600}$. Let
   \begin{equation}
     \Delta = \norm{\Sigma_1^{-1/2}\Sigma_2\Sigma_1^{-1/2}-I_d}_F + \norm{\Sigma_1^{-1/2}(\mu_1-\mu_2)}_2.
   \end{equation}
   Then
   \begin{equation}
     \text{TV}(\mathcal{N}(\mu_1,\Sigma_1),\mathcal{N}(\mu_2,\Sigma_2))\leq \frac{1}{\sqrt{2}}\Delta.
   \end{equation}
 \end{lemma}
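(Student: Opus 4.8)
The plan is to prove the two displayed inequalities separately: the first is an immediate instantiation of Lemma~\ref{lem:2}, and the second follows from a triangle inequality in total variation combined with a weighted-$L^1$ estimate that transfers the posterior bound to the marginalized likelihoods.

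For the first inequality, I would match the two Gaussians of Lemma~\ref{lem:2} to the densities at hand: take $(\mu_1,\Sigma_1)=(\mu,\Sigma)$ for $p^{\text{app}}(\bm{x}_0|\bm{x}_t)$ and $(\mu_2,\Sigma_2)=(\mathbb{E}[\bm{x}_0|\bm{x}_t],\mathbb{V}[\bm{x}_0|\bm{x}_t])$ for $p^{\text{mid}}(\bm{x}_0|\bm{x}_t)$. Under this matching the quantity $\Delta$ of Lemma~\ref{lem:2} becomes exactly $\norm{\Sigma^{-1/2}\mathbb{V}[\bm{x}_0|\bm{x}_t]\Sigma^{-1/2}-I_d}_F+\norm{\Sigma^{-1/2}(\mu-\mathbb{E}[\bm{x}_0|\bm{x}_t])}_2$, and the standing hypothesis $\text{TV}(p^{\text{app}},p^{\text{mid}})\le\frac{1}{600}$ is precisely the small-overlap condition the lemma requires. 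Hence the conclusion $\text{TV}(p^{\text{app}},p^{\text{mid}})\le\frac{1}{\sqrt 2}\Delta$ is immediate. The only bookkeeping point is that both covariances be positive-definite so that $\Sigma^{-1/2}$ and the Tweedie variance $\mathbb{V}[\bm{x}_0|\bm{x}_t]$ are well defined, which holds whenever the noised-marginal score is differentiable, as assumed throughout.

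For the second inequality, I would first pass from the posteriors to the marginalized likelihoods. Writing $p^{\text{app}}(\bm{y}|\bm{x}_t)=\int p(\bm{y}|\bm{x}_0)p^{\text{app}}(\bm{x}_0|\bm{x}_t)d\bm{x}_0$ and $p(\bm{y}|\bm{x}_t)=\int p(\bm{y}|\bm{x}_0)p(\bm{x}_0|\bm{x}_t)d\bm{x}_0$, their difference is $\int p(\bm{y}|\bm{x}_0)[p^{\text{app}}(\bm{x}_0|\bm{x}_t)-p(\bm{x}_0|\bm{x}_t)]d\bm{x}_0$; taking absolute values inside and using the convention $\int|f-g|\,d\bm{x}_0=2\,\text{TV}(f,g)$ (the factor $2$ in the claim originates here) yields $|p^{\text{app}}(\bm{y}|\bm{x}_t)-p(\bm{y}|\bm{x}_t)|\le 2\,\text{TV}(p^{\text{app}}(\bm{x}_0|\bm{x}_t),p(\bm{x}_0|\bm{x}_t))\int p(\bm{y}|\bm{x}_0)d\bm{x}_0$. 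It then remains to bound the posterior $\text{TV}$, which I would do by the triangle inequality through $p^{\text{mid}}$: combining the first part with the standing assumption $\text{TV}(p^{\text{mid}},p)\le\epsilon_{\text{MF}}$, and loosening the $\ell_2$ coefficient via $\frac{1}{\sqrt 2}\le 1$, gives exactly $\epsilon_{\text{MF}}+\frac{1}{\sqrt 2}\norm{\Sigma^{-1/2}\mathbb{V}[\bm{x}_0|\bm{x}_t]\Sigma^{-1/2}-I_d}_F+\norm{\Sigma^{-1/2}(\mu-\mathbb{E}[\bm{x}_0|\bm{x}_t])}_2$, and substituting produces the stated bound.

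I expect the main obstacle to be the weighted step $|\int p(\bm{y}|\bm{x}_0)[f-g]\,d\bm{x}_0|\le 2\,\text{TV}(f,g)\int p(\bm{y}|\bm{x}_0)d\bm{x}_0$. Moving the weight $p(\bm{y}|\bm{x}_0)$ outside the total-variation integral is not automatic, since a pointwise bound naturally yields a factor $\sup_{\bm{x}_0}p(\bm{y}|\bm{x}_0)$ rather than the average $\int p(\bm{y}|\bm{x}_0)d\bm{x}_0$ that appears in the statement. I would therefore pin down the precise sense in which the weight is extracted---most cleanly either by normalizing $p(\bm{y}|\bm{x}_0)$ to a probability density before invoking the $\text{TV}$ identity, or by using the explicit noise model $p(\bm{y}|\bm{x}_0)=\mathcal{N}(\bm{y}|\bm{A}\bm{x}_0,\sigma_y^2\bm{I})$ to relate $\sup_{\bm{x}_0}p(\bm{y}|\bm{x}_0)$ to $\int p(\bm{y}|\bm{x}_0)d\bm{x}_0$. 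This is the sole place where a genuine estimate, as opposed to a direct citation of Lemma~\ref{lem:2} or a triangle inequality, enters the argument.
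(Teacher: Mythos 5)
You were asked to prove Lemma~\ref{lem:2} itself: the quantitative bound $\text{TV}(\mathcal{N}(\mu_1,\Sigma_1),\mathcal{N}(\mu_2,\Sigma_2))\leq \frac{1}{\sqrt{2}}\Delta$ for two multivariate Gaussians under the proximity hypothesis $\text{TV}<\frac{1}{600}$. Your proposal never engages with this statement. What you prove instead is Theorem~\ref{thm:2} of the paper---the two-part bound involving $p^{\text{app}}$, $p^{\text{mid}}$ and $\epsilon_{\text{MF}}$---and you do so precisely by treating Lemma~\ref{lem:2} as a black box (``an immediate instantiation of Lemma~\ref{lem:2}''). As a proof of Lemma~\ref{lem:2} this is circular: no argument is offered for why the whitened quantity $\Delta=\norm{\Sigma_1^{-1/2}\Sigma_2\Sigma_1^{-1/2}-I_d}_F+\norm{\Sigma_1^{-1/2}(\mu_1-\mu_2)}_2$ controls the total variation distance, nor for where the threshold $\frac{1}{600}$ enters. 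The paper itself gives no proof either---it imports the result from Arbas, Ashtiani and Liaw---but an actual proof requires genuine work: one relates $\text{TV}$ to the Hellinger or KL divergence of the two Gaussians, expands that divergence in the eigenvalues of $\Sigma_1^{-1/2}\Sigma_2\Sigma_1^{-1/2}$ and the whitened mean shift, and uses the hypothesis $\text{TV}<\frac{1}{600}$ to guarantee those eigenvalues are close enough to $1$ for the local quadratic approximation to be two-sided. None of that appears in your write-up.

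Judged as a proof of Theorem~\ref{thm:2}, which is what you actually wrote, your argument coincides with the paper's own: the decomposition $p(\bm{y}|\bm{x}_t)=\int p(\bm{y}|\bm{x}_0)p(\bm{x}_0|\bm{x}_t)\,d\bm{x}_0$, the triangle inequality through $p^{\text{mid}}$, the identity $\int|f-g|\,d\bm{x}_0=2\,\text{TV}(f,g)$, and Lemma~\ref{lem:2} applied to the Gaussian pair. Your closing worry is well placed and in fact applies to the paper's derivation too: the step $\int p(\bm{y}|\bm{x}_0)|f-g|\,d\bm{x}_0\leq 2\,\text{TV}(f,g)\int p(\bm{y}|\bm{x}_0)\,d\bm{x}_0$ does not follow from the $L^1$ identity; H\"older gives the factor $\sup_{\bm{x}_0}p(\bm{y}|\bm{x}_0)$ instead, and for the likelihood $\mathcal{N}(\bm{y}|\bm{A}\bm{x}_0,\sigma_y^2\bm{I})$ viewed as a function of $\bm{x}_0$ the supremum and the integral are genuinely different quantities (the integral even diverges when $\bm{A}$ has a nontrivial null space). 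But identifying a weakness in Theorem~\ref{thm:2} is not a proof of Lemma~\ref{lem:2}; for the assigned statement the proposal contains no proof at all.
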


  \begin{proof}[Proof of Theorem~\ref{thm:2}]

Though $p(\bm{y}|\bm{x}_t)$ is intractable, we have its exact mean and variance characterization in Lemma~\ref{lem:1}. Using the decomposition
 \begin{equation}\label{eq:decomp}
   p(\bm{y}|\bm{x}_t) = \int p(\bm{y}|\bm{x}_0)p(\bm{x}_0|\bm{x}_t) d\bm{x}_0,
 \end{equation}
for the given approximation $p^{\text{app}}(\bm{y}|\bm{x}_t)=\int p(\bm{y}|\bm{x}_0)p^{\text{app}}(\bm{x}_0|\bm{x}_t) d\bm{x}_0$ to $p(\bm{y}|\bm{x}_t)$, 
we have
 \begin{equation*}
   \begin{aligned}
     |p(\bm{y}|\bm{x}_t)&-p^{\text{app}}(\bm{y}|\bm{x}_t)|\\
     &\leq \int p(\bm{y}|\bm{x}_0)|p(\bm{x}_0|\bm{x}_t)-p^{\text{app}}(\bm{x}_0|\bm{x}_t)| d\bm{x}_0\\
     &\leq \int p(\bm{y}|\bm{x}_0)|p(\bm{x}_0|\bm{x}_t)-p^{\text{mid}}(\bm{x}_0|\bm{x}_t)| d\bm{x}_0 + \int p(\bm{y}|\bm{x}_0)|p^{\text{mid}}(\bm{x}_0|\bm{x}_t)-p^{\text{app}}(\bm{x}_0|\bm{x}_t)| d\bm{x}_0.\\
     &\leq 2(\text{TV}(p^{\text{mid}}(\bm{x}_0|\bm{x}_t),p(\bm{x}_0|\bm{x}_t)) + \text{TV}(p^{\text{mid}}(\bm{x}_0|\bm{x}_t),p^{\text{app}}(\bm{x}_0|\bm{x}_t)))\cdot \int p(\bm{y}|\bm{x}_0)d\bm{x}_0.
   \end{aligned}
 \end{equation*}
 Using Lemma~\ref{lem:2}, we complete the proof.
\end{proof}

Our MAP-based method configures $p^{\text{app}}(\bm{x}_0|\bm{x}_t)=\mathcal{N}(\hat{\bm{x}}_0(\bm{x}_t), \frac{\sigma_t^2}{\alpha_t^2})$ using Tweedie's formula. We omit the Jacobian matrix of score function $\nabla_{\bm{x}_t}^2\log p(\bm{x}_t)$ in the approximation $p^{\text{app}}(\bm{x}_0|\bm{x}_t)$, we can not expect the difference $|p^{\text{app}}(\bm{y}|\bm{x}_t)-p(\bm{y}|\bm{x}_t)|$ vanishes when both the error of the score function  $\varepsilon_{\text {score }}$ and the error of Jacobian matrix of score function $\varepsilon_{\mathrm{Jacobi}}$ vanish. Though the omission seems unreasonable, our approach saves computational cost without much performance scarification. 

\section{Experiments}\label{sec:exp}

We conduct experiments on a variant of inverse problems, including the image restoration tasks, medical sparse-view CT, JPEG decompression~\cite{kawar2022jpeg} to evaluate the performance of the proposed MAP-based conditional sampling method. For linear problems, we test our method on two cases: SVD-dependent and SVD-free linear operators. The JPEG decompression problem~\cite{kawar2022jpeg} is evaluated to show the extension of a specific nonlinear application with nondifferentiable degradation model.

For image restoration, two datasets, including Flickr Faces High Quality (FFHQ) $256\times 256$~\cite{karras2019style} and ImageNet $256\times 256$~\cite{deng2009imagenet} are considered. The publicly accessible pretrained score-based models for the two datasets are built on VP SDE. Hence in this case, the parameters in~\eqref{eq:unc} are $\alpha_t = \sqrt{\overline{\alpha}_t}, \sigma_t = \sqrt{1-\overline{\alpha}_t}$. The models are leveraged without finetuning for specific tasks. For FFHQ, we use the pretrained DDPM model from~\cite{choi2021ilvr}, and for ImageNet, we use the pretrained DDPM model from~\cite{dhariwal2021diffusion}. The performance of different posterior sampling methods is validated on 1000 images from both FFHQ and ImageNet datasets as the previous works does. The testing images for FFHQ are the first 1000 images while the 1000 images for ImageNet are selected as in~\cite{pan2020exploiting}.~\footnote{The selected 1K images can be referred in \url{https://github.com/XingangPan/deep-generatiVEprior/blob/master/scripts/imagenet_val_1k.txt}.}
For these datasets, we evaluate 100-diffusion-step iteration~\eqref{eq:ours}. To quantitatively compare the performance, we report the PSNR$\uparrow$, SSIM$\uparrow$ metric to measure the faithfulness to the original image, and the LPIPS$\downarrow$ metric to measure the perception quality of the restored image.

For CT reconstruction, we leveraged the diffusion model~\cite{song2021solving} based on VE SDE, which is trained on the 2016 American Association of Physicists in Medicine (AAPM) grand challenge dataset. In this case, the parameter in~\eqref{eq:unc} is set as $\alpha_t \equiv 1$. The estimate of $\mathbb{E}[\bm{x}_0|\bm{x}_t]$ is
\begin{equation}
  \hat{\bm{x}}_0(\bm{x}_t) = \bm{x}_t + \sigma_t^2\bm{s}_{\theta}(\bm{x}_t,t).
\end{equation}
For VE SDE, we follow the following unconditional sampling
\begin{equation}\label{eq:sm0}
  \bm{x}_{t-1} = \bm{x}_t + (\sigma_t^2-\sigma_{t-1}^2)\bm{s}_{\theta}(\bm{x}_t,t) + \sqrt{\sigma_t^2-\sigma_{t-1}^2}\bm{z}.
\end{equation}
Hence by the method in Section~\ref{sec:main}, the conditional measurement guided sampling method implements the following gradient descent after the unconditional iteration~\eqref{eq:sm0}
\begin{equation}\label{eq:vesde}
  \bm{x}_{t-1} = \bm{x}_{t-1} - (\sigma_t^2-\sigma_{t-1}^2)(\sigma_y^2\bm{I}+\sigma_t^2\bm{A}^T\bm{A})^{-1}\bm{A}^T(\bm{A}(\hat{\bm{x}}_0(\bm{x}_t))-\bm{y}).
\end{equation}
For sparse-view CT, we evaluate the above measurement-guided sampling with 1K sampling steps. For the CT performance evaluation, we provide only PSNR and SSIM metrics.

\subsection{Experiments for SVD-based linear problems}
\label{sec:svd-linear}

We first test and compare our MAP-based measurement guided diffusion sampling to the existing SVD-dependent methods for noisy linear problems. For a fair comparison, we adopted the degradation linear operators implemented in DDRM~\cite{kawar2022denoising}, where the SVD is provided. As a result, all the SVD-dependent methods can be included for comparison. We compare classical total variation regularized optimization method (denoted as L2TV), and diffusion model based methods, including DDRM~\cite{kawar2022denoising}, DDNM+~\cite{wang2022zero} for noisy problems, $\Pi$GDM~\cite{song2022pseudoinverse}, DMPS~\cite{meng2022diffusion} and the diffusion posterior sampling (DPS)~\cite{chung2022diffusion}. For L2TV, the data fidelity term is in squared $\ell_2$ metric and regularization parameter is set to $0.01$ for best performance cross all the problems. The results of other methods such as SNIPS~\cite{kawar2021snips}, Score-SDE~\cite{song2021solving}, and MCG~\cite{chung2022improving} are not shown as they performed inferior to DPS and DDRM in the mentioned image restoration tasks as demonstrated in~\cite{chung2022diffusion,kawar2022denoising}. For the compared methods, we use exactly the hyper-parameters as their default values. For $\Pi$GDM, we reimplemented it, as there is no available open-soured code. 

All images are normalized to the range $[0,1]$. Five tasks are considered as follows. (i) for deblurring with Gaussian kernel, the blur kernel is Gaussian with $\sigma=10$; (ii) for deblurring with a $9\times 9$ uniform kernel, and singular values below a certain threshold are zeroed, making the problem more ill-posed; (iii) for inpainting with random box mask, the mask of size $128\times 128$ is generated; (iv) and for inpainting, we mask parts of the original image with randomly $92\%$ of the pixels dropped;  (v) for super-resolution, we use a block averaging filter to downscale the images by a factor of 4 in each axis; Gaussian noise is added to the measurement domain with $\sigma_y=0.05$.

\paragraph{Effect of $\xi$ on restoration performance}
Note that there is a hyperparameter $\xi$ in our general diffusion posterior sampling scheme~\eqref{eq:cond}. Before proceeding, we conduct the ablation study of $\xi$ on the restoration performance. We selected the first 12 images from the 1k test images of FFHQ and ImageNet datasets and tested our method with different values of $\xi$ from the set $\{0.0,0.2,0.4,0.6,0.8,1.0\}$. The quantitative results of our method with different $\xi$ are listed in Tables~\ref{tab:1x} and~\ref{tab:2x}. See Figures~\ref{fig:ffhq_gauss_x} and~\ref{fig:img_gauss_x} for the visualization comparison. The best PSNR/SSIM performance is achieved when $\xi=1.0$, and LPIPS sometimes achieved best performance using $\xi=0.8$. For best PSNR/SSIM performance, our method configures $\xi=1$ hereafter. The extra injected pure Gaussian noise helps improve the restoration performance for inverse problems. The phenomenon aligned with the findings for unconditional generation in the works~\cite{xu2023restart,karras2022elucidating}.

\begin{table}[!htp]
  \centering 
  \caption{Quantitative results of our method with different $\xi$ on the FFHQ dataset. Inputs have an additive noise with $\sigma_y=0.05$.\label{tab:1x}}
  \footnotesize
 \begin{tabular}{@{\hspace{0pt}}c@{\hspace{3pt}}c@{\hspace{3pt}}c@{\hspace{3pt}}c@{\hspace{3pt}}c@{\hspace{3pt}}c@{\hspace{3pt}}c@{\hspace{3pt}}c@{\hspace{3pt}}c@{\hspace{3pt}}c@{\hspace{3pt}}c@{\hspace{3pt}}c@{\hspace{3pt}}c@{\hspace{3pt}}c@{\hspace{3pt}}c@{\hspace{2pt}}c@{\hspace{3pt}}c@{\hspace{3pt}}c@{\hspace{3pt}}c@{\hspace{3pt}}c@{\hspace{2pt}}c@{\hspace{3pt}}c@{\hspace{3pt}}c@{\hspace{0pt}}c@{\hspace{3pt}}c@{\hspace{3pt}}c@{\hspace{3pt}}c@{\hspace{0pt}}}
 \toprule
    &   \multicolumn{3}{c}{\textbf{Deblur (Gau)}}  && \multicolumn{3}{c}{\textbf{Deblur (uni)}} && \multicolumn{3}{c}{\textbf{Inpaint (box)}} && \multicolumn{3}{c}{\textbf{Inpaint (rand)}} && \multicolumn{3}{c}{\textbf{SR (x4)}}\\
    \cline{2-4} \cline{6-8} \cline{10-12} \cline{14-16} \cline{18-20}
     $\xi$ & PSNR  & SSIM & \makesamewidth[c]{SSIM}{LPIPS} && PSNR  & SSIM & \makesamewidth[c]{SSIM}{LPIPS} && PSNR  & SSIM & \makesamewidth[c]{SSIM}{LPIPS} && PSNR  & SSIM & \makesamewidth[c]{SSIM}{LPIPS} && PSNR  & SSIM & \makesamewidth[c]{SSIM}{LPIPS}\\
   \toprule
0.0   & 9.42  & 0.044 & 0.808 &  & 8.35  & 0.030  & 0.804 &  & 21.74 & 0.627 & 0.233 &  & 19.88 & 0.335 & 0.455 &  & 21.14 & 0.294 & 0.513 \\
0.2 & 16.41 & 0.178 & 0.654 &  & 11.98 & 0.079 & 0.742 &  & 21.66 & 0.667 & 0.216 &  & 24.39 & 0.663 & 0.265 &  & 25.14 & 0.518 & 0.405 \\
0.4 & 24.01 & 0.543 & 0.374 &  & 16.95 & 0.199 & 0.624 &  & 22.65 & 0.705 & 0.199 &  & 25.25 & 0.722 & 0.232 &  & 26.63 & 0.625 & 0.348 \\
0.6 & 26.39 & 0.719 & 0.255 &  & 25.12 & 0.548 & 0.341 &  & 23.17 & 0.734 & 0.187 &  & 25.79 & 0.755 & 0.215 &  & 27.92 & 0.720  & 0.284 \\
0.8 & 27.07 & 0.759 & 0.251 &  & 28.70  & 0.779 & 0.205 &  & 23.96 & 0.767 & 0.175 &  & 25.75 & 0.766 & 0.218 &  & 29.06 & 0.798 & 0.225 \\
1.0   & 27.29 & 0.771 & 0.284 &  & 29.41 & 0.820  & 0.235 &  & 24.54 & 0.813 & 0.157 &  & 26.05 & 0.773 & 0.269 &  & 29.74 & 0.835 & 0.217 \\
\toprule
\end{tabular} %
\end{table}
\normalsize

\begin{table}[!htp]
  \centering 
  \caption{Quantitative results of our method with different $\xi$ on the ImageNet dataset. Inputs have an additive noise with $\sigma_y=0.05$.\label{tab:2x}}
  \footnotesize
 \begin{tabular}{@{\hspace{0pt}}c@{\hspace{3pt}}c@{\hspace{3pt}}c@{\hspace{3pt}}c@{\hspace{3pt}}c@{\hspace{3pt}}c@{\hspace{3pt}}c@{\hspace{3pt}}c@{\hspace{3pt}}c@{\hspace{3pt}}c@{\hspace{3pt}}c@{\hspace{3pt}}c@{\hspace{3pt}}c@{\hspace{3pt}}c@{\hspace{3pt}}c@{\hspace{2pt}}c@{\hspace{3pt}}c@{\hspace{3pt}}c@{\hspace{3pt}}c@{\hspace{3pt}}c@{\hspace{2pt}}c@{\hspace{3pt}}c@{\hspace{3pt}}c@{\hspace{0pt}}c@{\hspace{3pt}}c@{\hspace{3pt}}c@{\hspace{3pt}}c@{\hspace{0pt}}}
 \toprule
    &   \multicolumn{3}{c}{\textbf{Deblur (Gau)}}  && \multicolumn{3}{c}{\textbf{Deblur (uni)}} && \multicolumn{3}{c}{\textbf{Inpaint (box)}} && \multicolumn{3}{c}{\textbf{Inpaint (rand)}} && \multicolumn{3}{c}{\textbf{SR (x4)}}\\
    \cline{2-4} \cline{6-8} \cline{10-12} \cline{14-16} \cline{18-20}
     $\xi$ & PSNR  & SSIM & \makesamewidth[c]{SSIM}{LPIPS} && PSNR  & SSIM & \makesamewidth[c]{SSIM}{LPIPS} && PSNR  & SSIM & \makesamewidth[c]{SSIM}{LPIPS} && PSNR  & SSIM & \makesamewidth[c]{SSIM}{LPIPS} && PSNR  & SSIM & \makesamewidth[c]{SSIM}{LPIPS}\\
   \toprule
0.0   & 8.31  & 0.027 & 0.790 &  & 7.83  & 0.029 & 0.764 &  & 16.36 & 0.558 & 0.304 &  & 12.69 & 0.054 & 0.669 &  & 16.14 & 0.128 & 0.605 \\
0.2 & 10.03 & 0.039 & 0.751 &  & 9.70  & 0.050 & 0.739 &  & 18.54 & 0.589 & 0.268 &  & 18.73 & 0.208 & 0.538 &  & 19.31 & 0.215 & 0.529 \\
0.4 & 12.27 & 0.062 & 0.714 &  & 11.05 & 0.064 & 0.718 &  & 19.72 & 0.619 & 0.250 &  & 21.39 & 0.353 & 0.462 &  & 22.56 & 0.359 & 0.464 \\
0.6 & 19.58 & 0.221 & 0.567 &  & 12.81 & 0.085 & 0.687 &  & 20.53 & 0.651 & 0.249 &  & 23.12 & 0.486 & 0.414 &  & 24.55 & 0.491 & 0.434 \\
0.8 & 24.89 & 0.559 & 0.404 &  & 17.69 & 0.180 & 0.592 &  & 21.01 & 0.680 & 0.259 &  & 24.08 & 0.578 & 0.391 &  & 25.62 & 0.572 & 0.399 \\
1.0   & 25.28 & 0.587 & 0.471 &  & 26.86 & 0.654 & 0.392 &  & 21.76 & 0.726 & 0.248 &  & 24.20 & 0.592 & 0.453 &  & 26.88 & 0.665 & 0.365 \\
\toprule
\end{tabular} %
\end{table}
\normalsize

\begin{figure}[!htp]
   \centering 
  \begin{tabular}{c@{\hspace*{1pt}}c@{\hspace*{1pt}}c@{\hspace*{1pt}}c@{\hspace*{1pt}}c@{\hspace*{1pt}}c@{\hspace*{1pt}}c@{\hspace*{1pt}}c@{\hspace*{1pt}}c@{\hspace*{1pt}}c@{\hspace*{2pt}}c@{\hspace*{2pt}}c@{\hspace*{2pt}}c@{\hspace*{1pt}}}
 \rotatebox[origin=c]{90}{\small{SR}} & \raisebox{-0.5\height}{\includegraphics[width = 0.118\textwidth]{./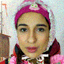}}
 & \raisebox{-0.5\height}{\includegraphics[width = 0.118\textwidth]{./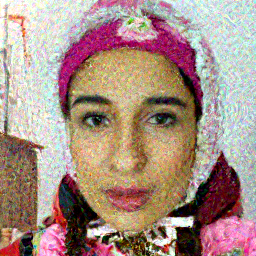}}
 & \raisebox{-0.5\height}{\includegraphics[width = 0.118\textwidth]{./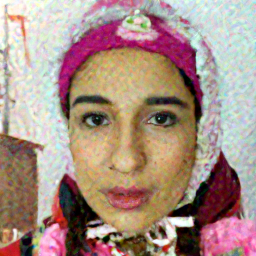}}
 & \raisebox{-0.5\height}{\includegraphics[width = 0.118\textwidth]{./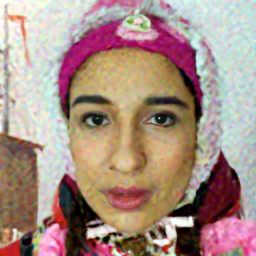}}
 & \raisebox{-0.5\height}{\includegraphics[width = 0.118\textwidth]{./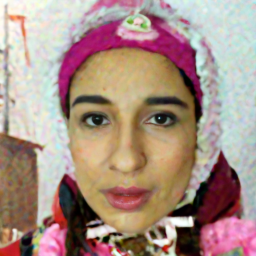}}
 & \raisebox{-0.5\height}{\includegraphics[width = 0.118\textwidth]{./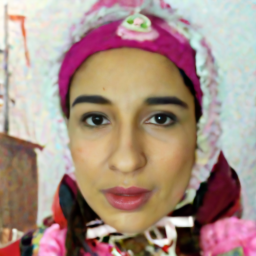}}
 & \raisebox{-0.5\height}{\includegraphics[width = 0.118\textwidth]{./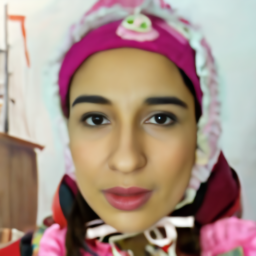}}
    & \raisebox{-0.5\height}{\includegraphics[width = 0.118\textwidth]{./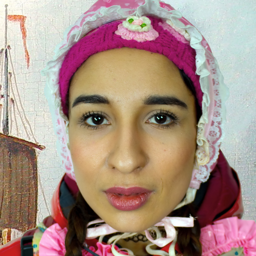}}\\
 \rotatebox[origin=c]{90}{\small{Deblur (Gau)}} & \raisebox{-0.5\height}{\includegraphics[width = 0.118\textwidth]{./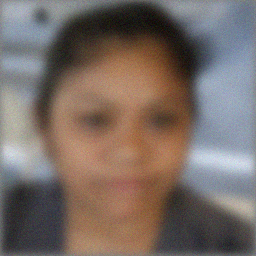}}
 & \raisebox{-0.5\height}{\includegraphics[width = 0.118\textwidth]{./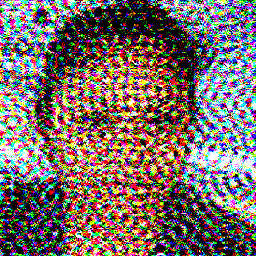}}
 & \raisebox{-0.5\height}{\includegraphics[width = 0.118\textwidth]{./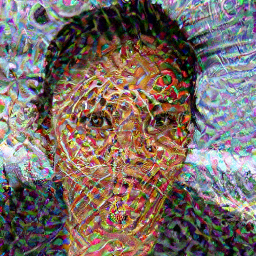}}
 & \raisebox{-0.5\height}{\includegraphics[width = 0.118\textwidth]{./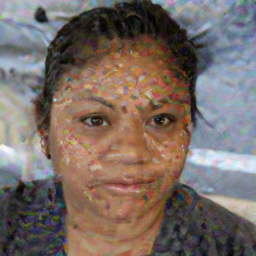}}
 & \raisebox{-0.5\height}{\includegraphics[width = 0.118\textwidth]{./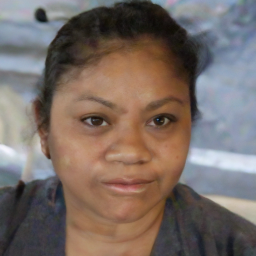}}
 & \raisebox{-0.5\height}{\includegraphics[width = 0.118\textwidth]{./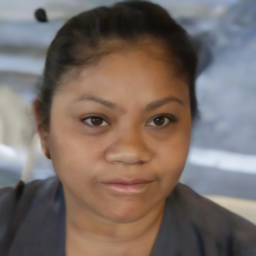}}
 & \raisebox{-0.5\height}{\includegraphics[width = 0.118\textwidth]{./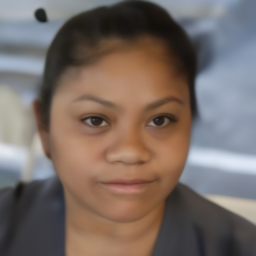}}
    & \raisebox{-0.5\height}{\includegraphics[width = 0.118\textwidth]{./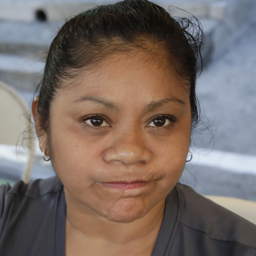}}\\
 \rotatebox[origin=c]{90}{\small{Deblur (uni)}} & \raisebox{-0.5\height}{\includegraphics[width = 0.118\textwidth]{./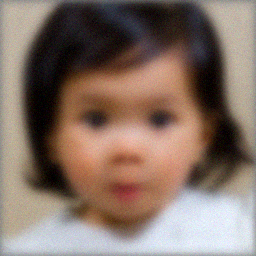}}
 & \raisebox{-0.5\height}{\includegraphics[width = 0.118\textwidth]{./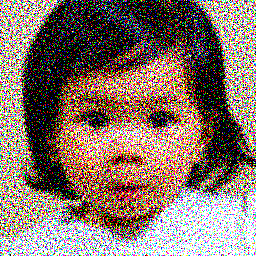}}
 & \raisebox{-0.5\height}{\includegraphics[width = 0.118\textwidth]{./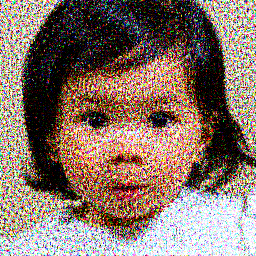}}
 & \raisebox{-0.5\height}{\includegraphics[width = 0.118\textwidth]{./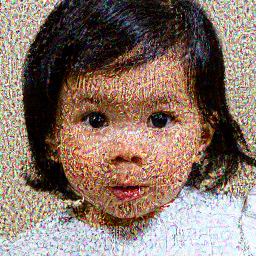}}
 & \raisebox{-0.5\height}{\includegraphics[width = 0.118\textwidth]{./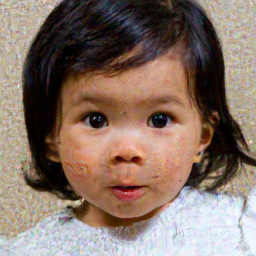}}
 & \raisebox{-0.5\height}{\includegraphics[width = 0.118\textwidth]{./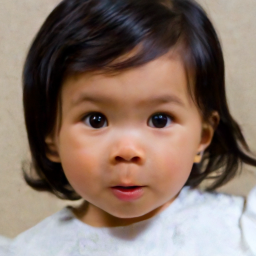}}
 & \raisebox{-0.5\height}{\includegraphics[width = 0.118\textwidth]{./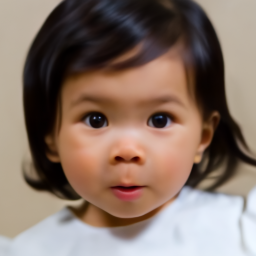}}
    & \raisebox{-0.5\height}{\includegraphics[width = 0.118\textwidth]{./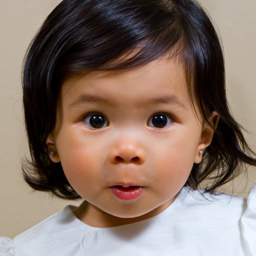}}\\
 \rotatebox[origin=c]{90}{\small{Inp (box)}} & \raisebox{-0.5\height}{\includegraphics[width = 0.118\textwidth]{./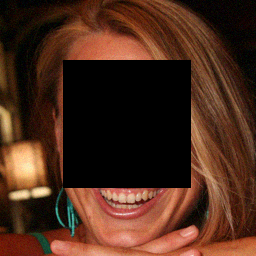}}
 & \raisebox{-0.5\height}{\includegraphics[width = 0.118\textwidth]{./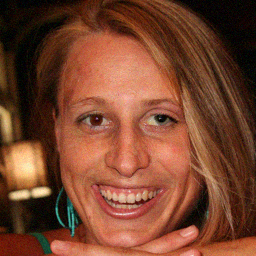}}
 & \raisebox{-0.5\height}{\includegraphics[width = 0.118\textwidth]{./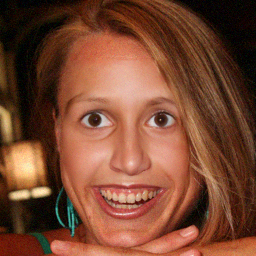}}
 & \raisebox{-0.5\height}{\includegraphics[width = 0.118\textwidth]{./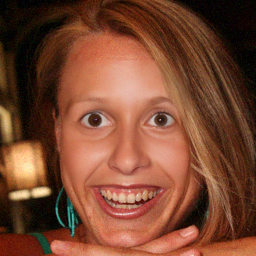}}
 & \raisebox{-0.5\height}{\includegraphics[width = 0.118\textwidth]{./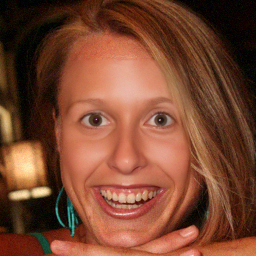}}
 & \raisebox{-0.5\height}{\includegraphics[width = 0.118\textwidth]{./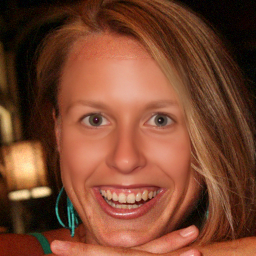}}
 & \raisebox{-0.5\height}{\includegraphics[width = 0.118\textwidth]{./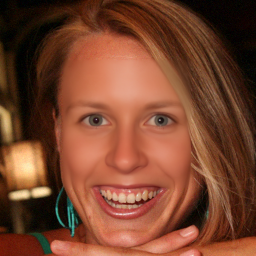}}
    & \raisebox{-0.5\height}{\includegraphics[width = 0.118\textwidth]{./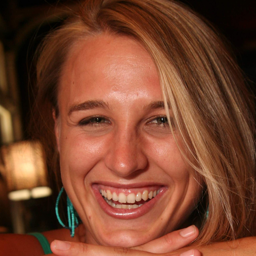}}\\
 \rotatebox[origin=c]{90}{\small{Inp (rand.)}} & \raisebox{-0.5\height}{\includegraphics[width = 0.118\textwidth]{./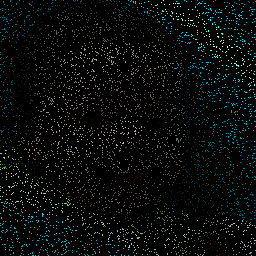}}
 & \raisebox{-0.5\height}{\includegraphics[width = 0.118\textwidth]{./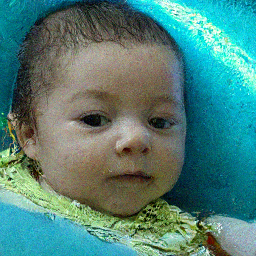}}
 & \raisebox{-0.5\height}{\includegraphics[width = 0.118\textwidth]{./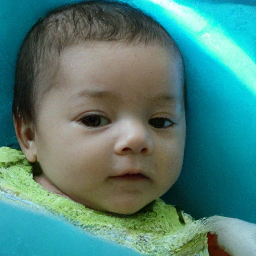}}
 & \raisebox{-0.5\height}{\includegraphics[width = 0.118\textwidth]{./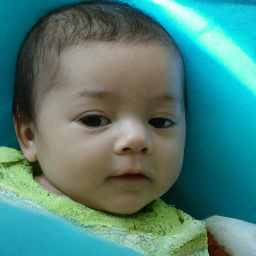}}
 & \raisebox{-0.5\height}{\includegraphics[width = 0.118\textwidth]{./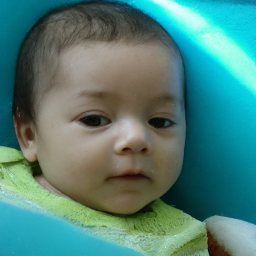}}
 & \raisebox{-0.5\height}{\includegraphics[width = 0.118\textwidth]{./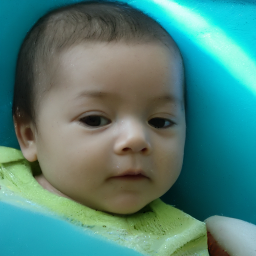}}
 & \raisebox{-0.5\height}{\includegraphics[width = 0.118\textwidth]{./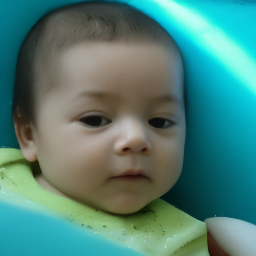}}
 & \raisebox{-0.5\height}{\includegraphics[width = 0.118\textwidth]{./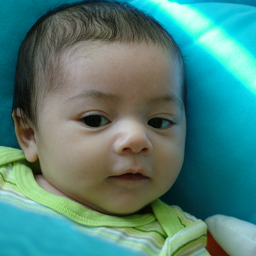}}\\ 
 & \small{Input} & \small{$\xi=0.0$} & \small{$\xi=0.2$} & \small{$\xi=0.4$} 
 & \small{$\xi=0.6$} & \small{$\xi=0.8$} & 
 \small{$\xi=1.0$} 
 & \small{GT}\\
  \end{tabular}
  \caption{Visualization of the results from our method with different $\xi$ for Gaussian noisy linear problems on the FFHQ $256\times 256$. Gaussian noise ($\sigma_y=0.05$) is added to the measurement. \label{fig:ffhq_gauss_x}} %
 \end{figure}
 
 \begin{figure}[!htbp]
   \centering 
  \begin{tabular}{c@{\hspace*{1pt}}c@{\hspace*{1pt}}c@{\hspace*{1pt}}c@{\hspace*{1pt}}c@{\hspace*{1pt}}c@{\hspace*{1pt}}c@{\hspace*{1pt}}c@{\hspace*{1pt}}c@{\hspace*{1pt}}c@{\hspace*{2pt}}c@{\hspace*{2pt}}c@{\hspace*{2pt}}c@{\hspace*{1pt}}}
 \rotatebox[origin=c]{90}{\small{SR}} &\raisebox{-0.5\height}{\includegraphics[width = 0.118\textwidth]{./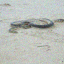}}
 & \raisebox{-0.5\height}{\includegraphics[width = 0.118\textwidth]{./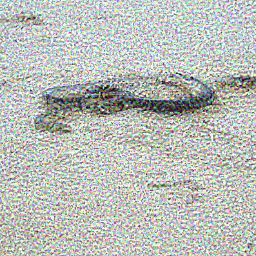}}
 & \raisebox{-0.5\height}{\includegraphics[width = 0.118\textwidth]{./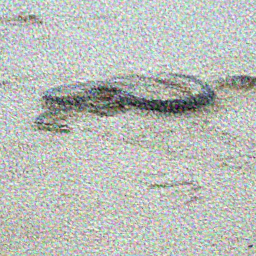}}
 & \raisebox{-0.5\height}{\includegraphics[width = 0.118\textwidth]{./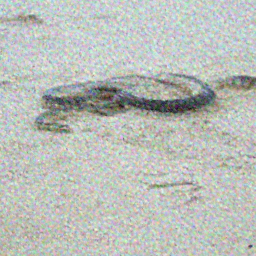}}
 & \raisebox{-0.5\height}{\includegraphics[width = 0.118\textwidth]{./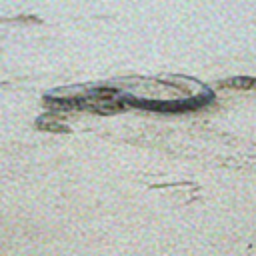}}
 & \raisebox{-0.5\height}{\includegraphics[width = 0.118\textwidth]{./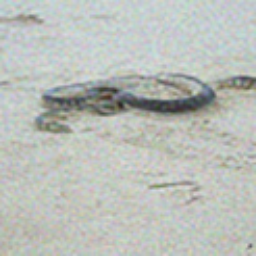}}
 & \raisebox{-0.5\height}{\includegraphics[width = 0.118\textwidth]{./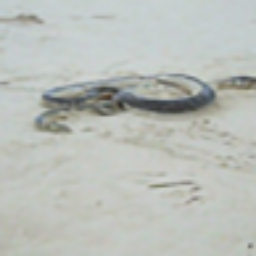}}
    & \raisebox{-0.5\height}{\includegraphics[width = 0.118\textwidth]{./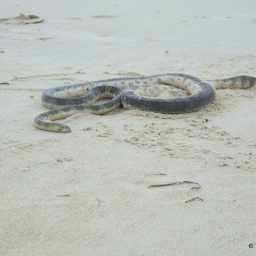}}\\
 \rotatebox[origin=c]{90}{\small{Deblur (Gau)}} &   \raisebox{-0.5\height}{\includegraphics[width = 0.118\textwidth]{./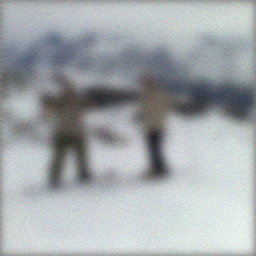}}
 & \raisebox{-0.5\height}{\includegraphics[width = 0.118\textwidth]{./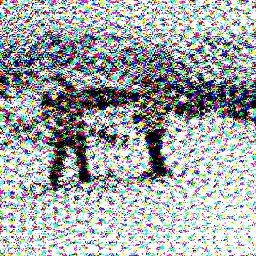}}
 & \raisebox{-0.5\height}{\includegraphics[width = 0.118\textwidth]{./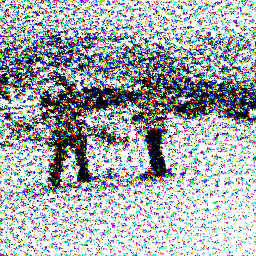}}
 & \raisebox{-0.5\height}{\includegraphics[width = 0.118\textwidth]{./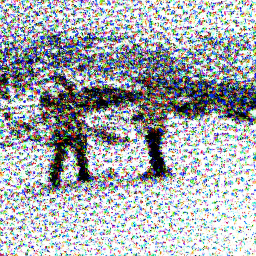}}
 & \raisebox{-0.5\height}{\includegraphics[width = 0.118\textwidth]{./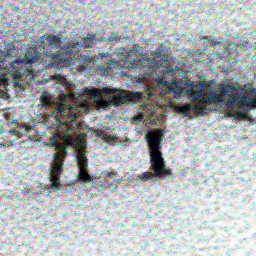}}
 & \raisebox{-0.5\height}{\includegraphics[width = 0.118\textwidth]{./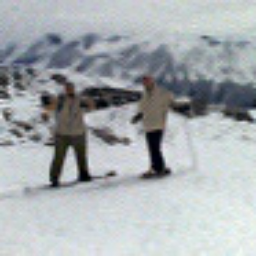}}
 & \raisebox{-0.5\height}{\includegraphics[width = 0.118\textwidth]{./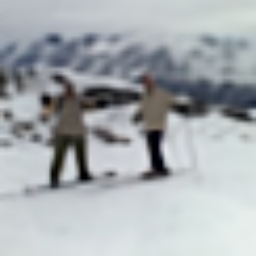}}
    & \raisebox{-0.5\height}{\includegraphics[width = 0.118\textwidth]{./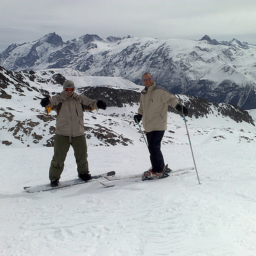}}\\
 \rotatebox[origin=c]{90}{\small{Deblur (uni)}} &\raisebox{-0.5\height}{\includegraphics[width = 0.118\textwidth]{./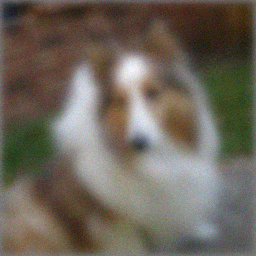}}
 & \raisebox{-0.5\height}{\includegraphics[width = 0.118\textwidth]{./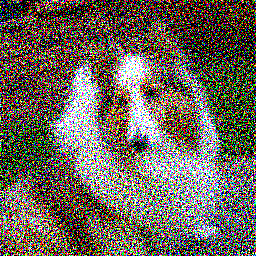}}
 & \raisebox{-0.5\height}{\includegraphics[width = 0.118\textwidth]{./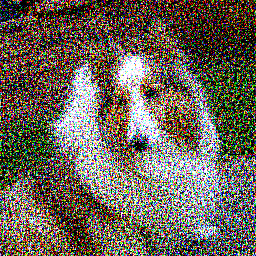}}
 & \raisebox{-0.5\height}{\includegraphics[width = 0.118\textwidth]{./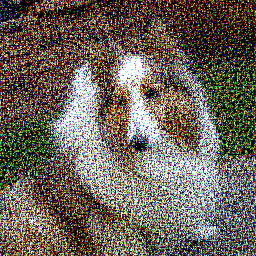}}
 & \raisebox{-0.5\height}{\includegraphics[width = 0.118\textwidth]{./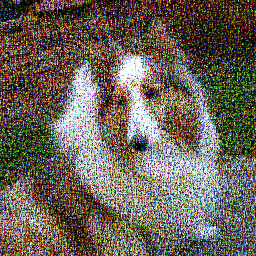}}
 & \raisebox{-0.5\height}{\includegraphics[width = 0.118\textwidth]{./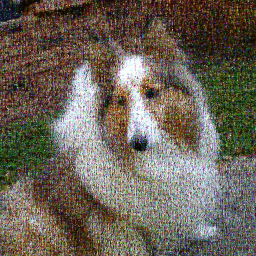}}
 & \raisebox{-0.5\height}{\includegraphics[width = 0.118\textwidth]{./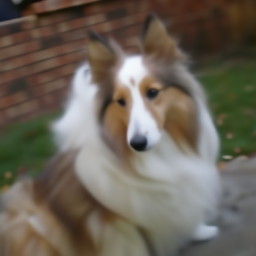}}
    & \raisebox{-0.5\height}{\includegraphics[width = 0.118\textwidth]{./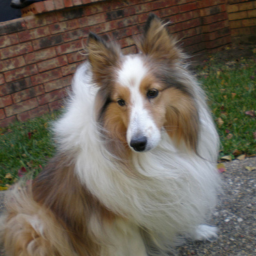}}\\
 \rotatebox[origin=c]{90}{\small{Inp (box)}} &   \raisebox{-0.5\height}{\includegraphics[width = 0.118\textwidth]{./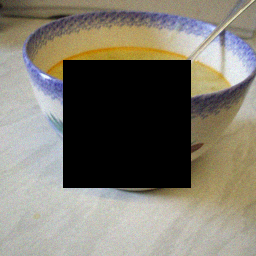}}
 & \raisebox{-0.5\height}{\includegraphics[width = 0.118\textwidth]{./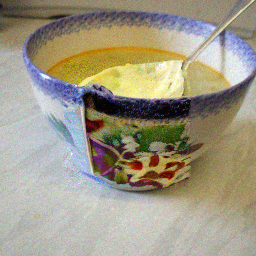}}
 & \raisebox{-0.5\height}{\includegraphics[width = 0.118\textwidth]{./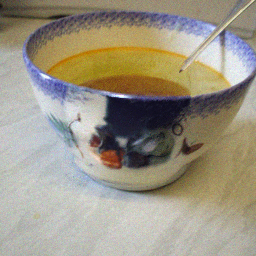}}
 & \raisebox{-0.5\height}{\includegraphics[width = 0.118\textwidth]{./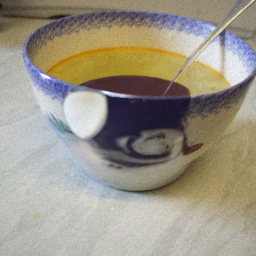}}
 & \raisebox{-0.5\height}{\includegraphics[width = 0.118\textwidth]{./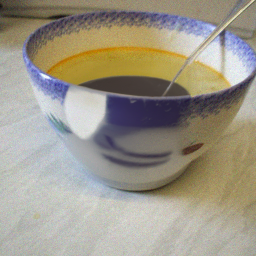}}
 & \raisebox{-0.5\height}{\includegraphics[width = 0.118\textwidth]{./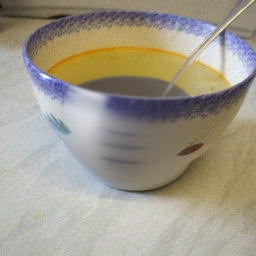}}
 & \raisebox{-0.5\height}{\includegraphics[width = 0.118\textwidth]{./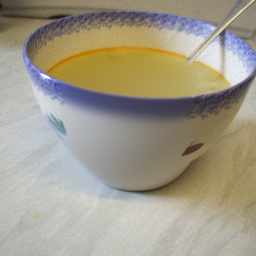}}
    & \raisebox{-0.5\height}{\includegraphics[width = 0.118\textwidth]{./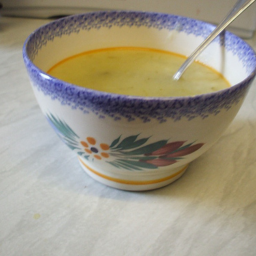}}\\
 \rotatebox[origin=c]{90}{\small{Inp (rand.)}} &      \raisebox{-0.5\height}{\includegraphics[width = 0.118\textwidth]{./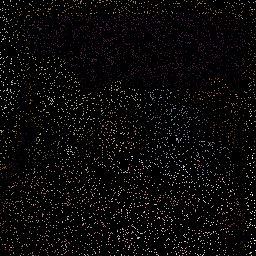}}
 & \raisebox{-0.5\height}{\includegraphics[width = 0.118\textwidth]{./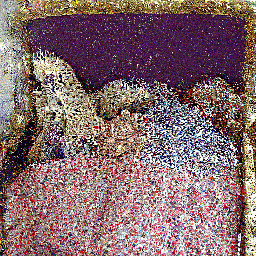}}
 & \raisebox{-0.5\height}{\includegraphics[width = 0.118\textwidth]{./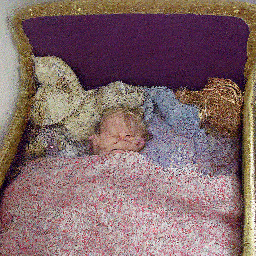}}
 & \raisebox{-0.5\height}{\includegraphics[width = 0.118\textwidth]{./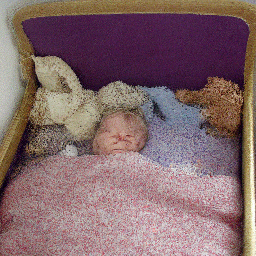}}
 & \raisebox{-0.5\height}{\includegraphics[width = 0.118\textwidth]{./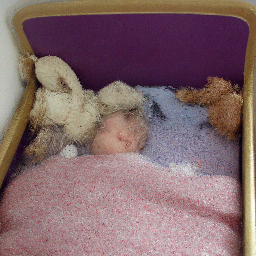}}
 & \raisebox{-0.5\height}{\includegraphics[width = 0.118\textwidth]{./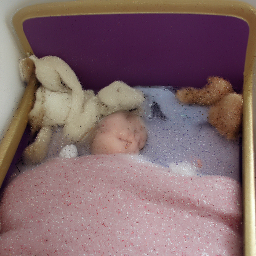}}
 & \raisebox{-0.5\height}{\includegraphics[width = 0.118\textwidth]{./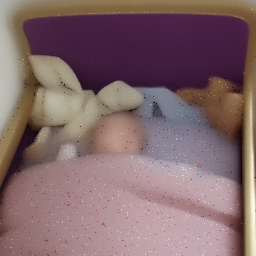}}
 & \raisebox{-0.5\height}{\includegraphics[width = 0.118\textwidth]{./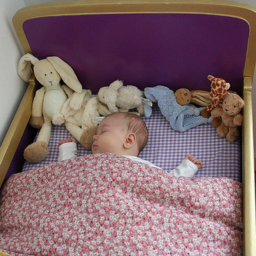}}\\
 & \small{Input} & \small{$\xi=0.0$} & \small{$\xi=0.2$} & \small{$\xi=0.4$} 
 & \small{$\xi=0.6$} & \small{$\xi=0.8$} & 
 \small{$\xi=1.0$} 
 & \small{GT}\\
  \end{tabular}%
 \caption{Visualization of the results from our method with different $\xi$ for Gaussian noisy linear problems on the ImageNet $256\times 256$. Gaussian noise ($\sigma_y=0.05$) is added to the measurement. \label{fig:img_gauss_x}} %
 \end{figure}

\paragraph{Quantitative results of the compared methods}

 The quantitative comparison of different algorithms in terms of the three metrics is shown in Table~\ref{tab:1m} and~\ref{tab:2m} for FFHQ and ImageNet respectively. The best result is in \textcolor{blue}{blue}, and the second best is \underline{underlined}.   %

 \begin{table}[!htp]
  \centering 
  \caption{Quantitative results of different methods on the FFHQ dataset. Inputs have an additive noise with $\sigma_y=0.05$.\label{tab:1m}}
  \footnotesize
 \begin{tabular}{@{\hspace{0pt}}c@{\hspace{3pt}}c@{\hspace{3pt}}c@{\hspace{3pt}}c@{\hspace{3pt}}c@{\hspace{3pt}}c@{\hspace{3pt}}c@{\hspace{3pt}}c@{\hspace{3pt}}c@{\hspace{3pt}}c@{\hspace{3pt}}c@{\hspace{3pt}}c@{\hspace{3pt}}c@{\hspace{3pt}}c@{\hspace{3pt}}c@{\hspace{2pt}}c@{\hspace{3pt}}c@{\hspace{3pt}}c@{\hspace{3pt}}c@{\hspace{3pt}}c@{\hspace{2pt}}c@{\hspace{3pt}}c@{\hspace{3pt}}c@{\hspace{0pt}}c@{\hspace{3pt}}c@{\hspace{3pt}}c@{\hspace{3pt}}c@{\hspace{0pt}}}
 \toprule
    &   \multicolumn{3}{c}{\textbf{Deblur (Gau)}}  && \multicolumn{3}{c}{\textbf{Deblur (uni)}} && \multicolumn{3}{c}{\textbf{Inpaint (box)}} && \multicolumn{3}{c}{\textbf{Inpaint (rand)}} && \multicolumn{3}{c}{\textbf{SR (x4)}}\\
    \cline{2-4} \cline{6-8} \cline{10-12} \cline{14-16} \cline{18-20}
     Method & PSNR  & SSIM & \makesamewidth[c]{SSIM}{LPIPS} && PSNR  & SSIM & \makesamewidth[c]{SSIM}{LPIPS} && PSNR  & SSIM & \makesamewidth[c]{SSIM}{LPIPS} && PSNR  & SSIM & \makesamewidth[c]{SSIM}{LPIPS} && PSNR  & SSIM & \makesamewidth[c]{SSIM}{LPIPS}\\
   \toprule
   L2TV & 25.11 & 0.675 & 0.429 &  & 26.39 & 0.714 & 0.396 &  & 18.94 & 0.721 & 0.318 &  & 15.30 & 0.385 & 0.608 &  & 27.33 & 0.745 & 0.367 \\
   \toprule
DDRM     & 23.61       & 0.674 & 0.271 && 27.10        & 0.764 & \ul{0.237} && 24.10     & 0.834 & \ul{0.148} && 21.88       & 0.634 & 0.346 && 28.01 & 0.790  & 0.231 \\
   DDNM+    & 10.06       & 0.051 & 0.756 && 28.63      & 0.793 & 0.256 && \ul{25.12}    & \bl{0.850}  & \bl{0.148} && 24.40        & 0.716 & 0.317 && 28.82 & 0.807 & 0.251 \\
   \toprule
DPS      & 23.74       & 0.635 & \ul{0.266} && 24.38       & 0.657 & 0.259 && 23.54    & 0.744 & 0.230  && 25.54       & 0.732 & \bl{0.234} && 25.75 & 0.708 & 0.246 \\
$\Pi$GDM & 25.23       & 0.713 & 0.307 && \ul{29.15}       & 0.807 & 0.249 && 22.56    & \ul{0.838} & 0.178 && \bl{26.20}        & \ul{0.758} & 0.297 && \ul{29.47} & \ul{0.823} & 0.233 \\
DMPS     & \ul{26.18}       & \ul{0.727} & \bl{0.250}  && 28.41       & 0.789 & \bl{0.216} && 15.62    & 0.680  & 0.318 && 13.52       & 0.280  & 0.625 && 28.45 & 0.799 & \bl{0.205} \\
Ours     & \bl{27.35}       & \bl{0.761} & 0.286 && \bl{29.19}       & 0.809 & 0.240  && \bl{25.13}    & 0.808 & 0.167 && \ul{25.92}       & \bl{0.762} & \ul{0.271} && \bl{29.52} & \bl{0.825} & \ul{0.224} \\
\toprule
\end{tabular} %
\end{table}
\normalsize

\begin{table}[!htp]
   \centering 
   \caption{Quantitative results of different methods on the ImageNet dataset. Inputs have an additive noise with $\sigma_y=0.05$.\label{tab:2m}}
   \footnotesize
 \begin{tabular}{@{\hspace{0pt}}c@{\hspace{3pt}}c@{\hspace{3pt}}c@{\hspace{3pt}}c@{\hspace{3pt}}c@{\hspace{3pt}}c@{\hspace{3pt}}c@{\hspace{3pt}}c@{\hspace{3pt}}c@{\hspace{3pt}}c@{\hspace{3pt}}c@{\hspace{3pt}}c@{\hspace{3pt}}c@{\hspace{3pt}}c@{\hspace{3pt}}c@{\hspace{2pt}}c@{\hspace{3pt}}c@{\hspace{3pt}}c@{\hspace{3pt}}c@{\hspace{3pt}}c@{\hspace{2pt}}c@{\hspace{3pt}}c@{\hspace{3pt}}c@{\hspace{0pt}}c@{\hspace{3pt}}c@{\hspace{3pt}}c@{\hspace{3pt}}c@{\hspace{0pt}}}
 \toprule
    &   \multicolumn{3}{c}{\textbf{Deblur (Gau)}}  && \multicolumn{3}{c}{\textbf{Deblur (uni)}} && \multicolumn{3}{c}{\textbf{Inpaint (box)}} && \multicolumn{3}{c}{\textbf{Inpaint (rand)}} && \multicolumn{3}{c}{\textbf{SR (x4)}}\\
    \cline{2-4} \cline{6-8} \cline{10-12} \cline{14-16} \cline{18-20}
     Method & PSNR  & SSIM & \makesamewidth[c]{SSIM}{LPIPS} && PSNR  & SSIM & \makesamewidth[c]{SSIM}{LPIPS} && PSNR  & SSIM & \makesamewidth[c]{SSIM}{LPIPS} && PSNR  & SSIM & \makesamewidth[c]{SSIM}{LPIPS} && PSNR  & SSIM & \makesamewidth[c]{SSIM}{LPIPS}\\
   \toprule
   L2TV & 23.03 & 0.562 & 0.471 &  & 24.18 & 0.612 & 0.436 &  & 17.92 & 0.699 & 0.290 &  & 14.92 & 0.319 & 0.612 &  & 25.07 & 0.652 & 0.401 \\
   \toprule
DDRM     & 21.66 & 0.557 & \bl{0.391} &  & 25.08 & 0.677 & 0.326 &  & 20.01 & \ul{0.773} & \bl{0.219} &  & 19.65 & 0.444 & 0.529 &  & 25.66 & 0.700   & \ul{0.316} \\
DDNM+    & 8.80   & 0.046 & 0.745 &  & 25.08 & 0.626 & 0.391 &  & \ul{20.94} & \bl{0.790}  & \ul{0.223} &  & 21.73 & 0.565 & 0.458 &  & 26.15 & 0.714 & 0.321 \\
   \toprule
   DPS      & 17.99 & 0.361 & 0.457 &  & 21.78 & 0.505 & 0.380  &  & 19.08 & 0.655 & 0.330  &  & \bl{23.61} & \bl{0.648} & \bl{0.307} &  & 24.14 & 0.622 & 0.341 \\
$\Pi$GDM & 23.61 & 0.594 & 0.411 &  & \ul{26.76} & \ul{0.718} & 0.324 &  & 19.06 & 0.761 & 0.268 &  & \ul{23.46} & 0.617 & 0.426 &  & \bl{26.57} & \bl{0.728} & \bl{0.311} \\
DMPS     & \ul{23.92} & \ul{0.605} & 0.406 &  & 25.89 & 0.689 & \ul{0.320}  &  & 19.30  & 0.759 & 0.251 &  & 12.39 & 0.193 & 0.699 &  & 26.05 & 0.706 & 0.319 \\
Ours     & \bl{24.33} & \bl{0.622} & \ul{0.404} &  & \bl{26.84} & \bl{0.724} & \bl{0.306} &  & \bl{20.97} & 0.740  & 0.235 &  & 23.13 & \ul{0.624} & \ul{0.402} &  & \ul{26.50}  & \ul{0.726} & 0.299\\
\toprule
\end{tabular} %
\end{table}
\normalsize

As shown in the tables, though there is no overwhelming best performer among the diffusion model based methods across the tested tasks, the diffusion-based method remarkably outperforms the traditional TV regularized optimization for both the two considered datasets, showing the promising performance of image prior from diffusion model over TV regularization. Except for the two inpainting tasks, Ours, $\Pi$GDM and DMPS are highly related except for minor difference among them. Their performance is comparable, and ours achieves the best or second best results. DPS outperforms other methods in some tasks, this is attributed to its much longer 1k diffusion-step running. As a comparison, Ours, $\Pi$GDM and DMPS performed 100 diffusion steps. Notice that for inpainting tasks, Ours outperforms DMPS remarkably. This fact illustrates our MAP-based measurement guided diffusion is theoretically sound than the derivation of DMPS. There is another striking phenomenon, with careful tuning the parameters of DDNM+, deblurring with Gaussian kernel yields highly-corrupted restoration, while deblurring with uniform kernel works satisfyingly. For DPS, the results are distorted, where the measurement consistency is not fully achieved. See Figures~\ref{fig:ffhq_gauss} and~\ref{fig:img_gauss} for the visualizations. Restored image of L2TV shows the staircase artifacts and lacks of image details for all the tasks. For image inpainting with box mask, L2TV fills the box mask with a flat value, which contains no faithful content, for diffusion model based methods, ours produces much innocent restoration than the DMPS method. The measurement inconsistency of DPS to the ground truth can be seen in the following area: the hair in forehead, the background details.

\begin{remark}[Why DMPS fails for inpainting?]
  As we shown, though DMPS is related to our MAP-based method, they produced such fundamentally different results for the inpainting tasks. For the inpainting task, the measurement did not provide any information of the masked part of the image. To accomplish the inpainting, the prior from diffusion model plays an important role for the task. To differentiate the performance from ours and DMPS, let us compare the updated $\hat{\bm{x}}_0(\bm{x}_t,\bm{y})$ from the DMPS and our method. DMPS takes the update
\begin{equation}
  \hat{\bm{x}}_0(\bm{x}_t,\bm{y}):=(\sigma_y^2\bm{I}+\frac{\sigma_t^2}{\alpha_t^2}\bm{A}^T\bm{A})^{-1}(\sigma_y^2(\frac{\bm{x}_t}{\alpha_t})+\frac{\sigma_t^2}{\alpha_t^2}\bm{A}^T\bm{y}).
\end{equation}
While ours takes the update
\begin{equation}
  \hat{\bm{x}}_0(\bm{x}_t,\bm{y}):=(\sigma_y^2\bm{I}+\frac{\sigma_t^2}{\alpha_t^2}\bm{A}^T\bm{A})^{-1}(\sigma_y^2(\frac{\bm{x}_t+\sigma_t^2s_{\theta}(\bm{x}_t,t)}{\alpha_t})+\frac{\sigma_t^2}{\alpha_t^2}\bm{A}^T\bm{y}).
\end{equation}
DMPS filled the masked part using a noisy image $\bm{x}_t$, while ours filled the masked part using a ``denoised" image $\frac{\bm{x}_t+\sigma_t^2s_{\theta}(\bm{x}_t,t)}{\alpha_t}$, which contains more image prior information. 
\end{remark}
 
\noindent{\bf The running time comparison.} \ \
See Table~\ref{tab:time} for the running time for single image on the SR $\times 4$ task. All the codes are tested on the same machine equipped with a 3090 RTX GPU. Compared to the DPS~\cite{chung2022diffusion}, $\Pi$GDM, our MAP-based measurement guided diffusion sampling avoids the backpropagation through the diffusion model, hence it accelerates the computation. It saves memory as there is no backpropagation through the network.

\begin{table}[!htp] %
  \centering
  \caption{Comparison of running time in second.\label{tab:time}}
\begin{tabular}{@{\hspace*{0pt}}c@{\hspace*{2pt}}c@{\hspace*{2pt}}c@{\hspace*{2pt}}c@{\hspace*{2pt}}c@{\hspace*{2pt}}c@{\hspace*{2pt}}c@{\hspace*{0pt}}c@{\hspace*{0pt}}c@{\hspace*{0pt}}c@{\hspace*{0pt}}c@{\hspace*{0pt}}cc@{\hspace*{0pt}}cc@{\hspace*{0pt}}}
    \toprule
Method & DDRM & DDNM$+$ & $\Pi$GDM & DPS  & Ours \\\midrule
    Time   & 0.9  & 4.5  & 6.3 & 88.1 & 4.1   \\
    \toprule
       \end{tabular} %
     \end{table}

\begin{figure}[!htp]
   \centering 
  \begin{tabular}{c@{\hspace*{1pt}}c@{\hspace*{1pt}}c@{\hspace*{1pt}}c@{\hspace*{1pt}}c@{\hspace*{1pt}}c@{\hspace*{1pt}}c@{\hspace*{1pt}}c@{\hspace*{1pt}}c@{\hspace*{1pt}}c@{\hspace*{2pt}}c@{\hspace*{2pt}}c@{\hspace*{2pt}}c@{\hspace*{1pt}}}
   \rotatebox[origin=c]{90}{\small{SR}} &  \raisebox{-25pt}{\includegraphics[width = 0.118\textwidth]{./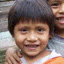}}
 & \raisebox{-25pt}{\includegraphics[width = 0.118\textwidth]{./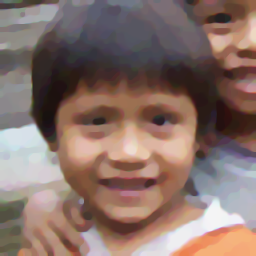}}
 & \raisebox{-25pt}{\includegraphics[width = 0.118\textwidth]{./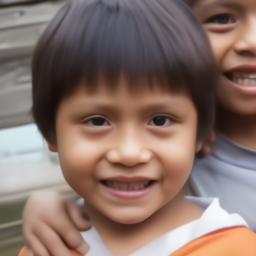}}
 & \raisebox{-25pt}{\includegraphics[width = 0.118\textwidth]{./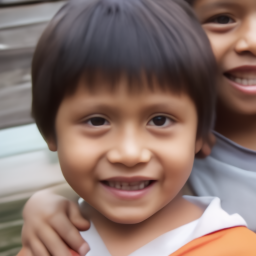}}
 & \raisebox{-25pt}{\includegraphics[width = 0.118\textwidth]{./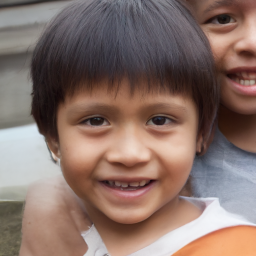}}
 & \raisebox{-25pt}{\includegraphics[width = 0.118\textwidth]{./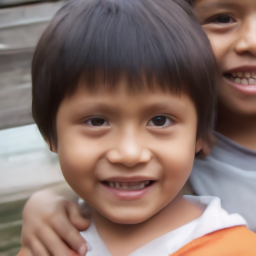}}
 & \raisebox{-25pt}{\includegraphics[width = 0.118\textwidth]{./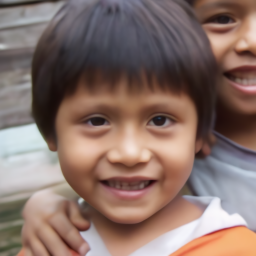}}
    & \raisebox{-25pt}{\includegraphics[width = 0.118\textwidth]{./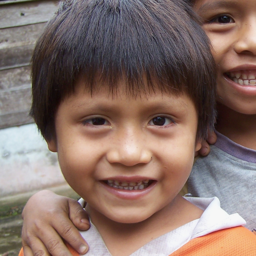}}\\
    \rotatebox[origin=c]{90}{\small{Deblur (Gau)}} & \raisebox{-25pt}{\includegraphics[width = 0.118\textwidth]{./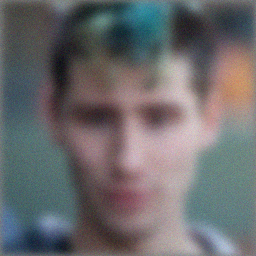}}
 & \raisebox{-25pt}{\includegraphics[width = 0.118\textwidth]{./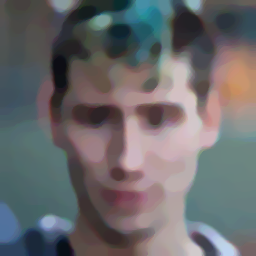}}
 & \raisebox{-25pt}{\includegraphics[width = 0.118\textwidth]{./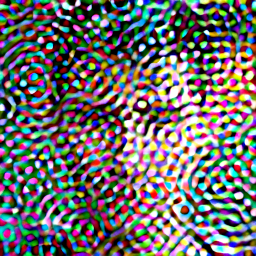}}
 & \raisebox{-25pt}{\includegraphics[width = 0.118\textwidth]{./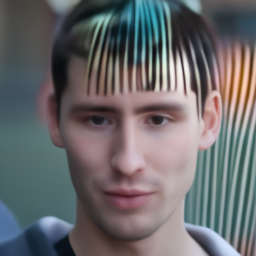}}
 & \raisebox{-25pt}{\includegraphics[width = 0.118\textwidth]{./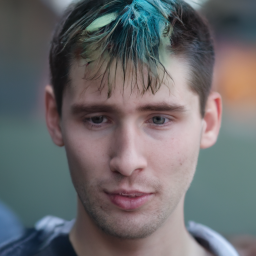}}
 & \raisebox{-25pt}{\includegraphics[width = 0.118\textwidth]{./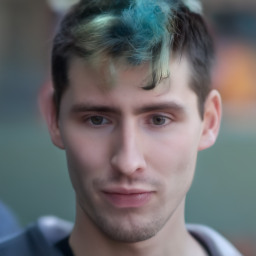}}
 & \raisebox{-25pt}{\includegraphics[width = 0.118\textwidth]{./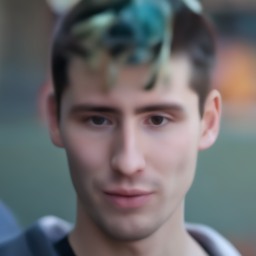}}
    & \raisebox{-25pt}{\includegraphics[width = 0.118\textwidth]{./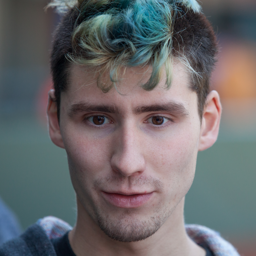}}\\
    \rotatebox[origin=c]{90}{\small{Deblur (uni)}} & \raisebox{-25pt}{\includegraphics[width = 0.118\textwidth]{./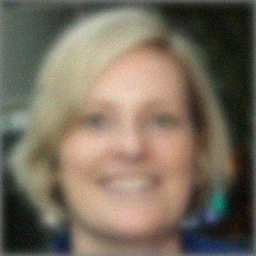}}
 & \raisebox{-25pt}{\includegraphics[width = 0.118\textwidth]{./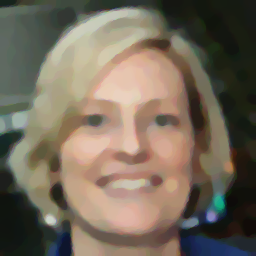}}
 & \raisebox{-25pt}{\includegraphics[width = 0.118\textwidth]{./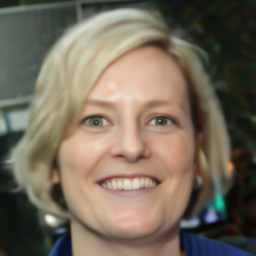}}
 & \raisebox{-25pt}{\includegraphics[width = 0.118\textwidth]{./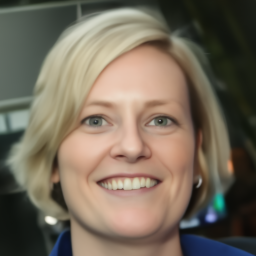}}
 & \raisebox{-25pt}{\includegraphics[width = 0.118\textwidth]{./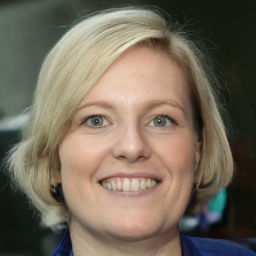}}
 & \raisebox{-25pt}{\includegraphics[width = 0.118\textwidth]{./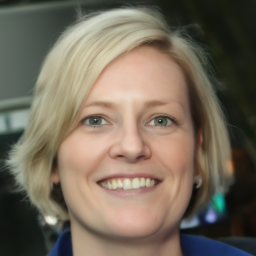}}
 & \raisebox{-25pt}{\includegraphics[width = 0.118\textwidth]{./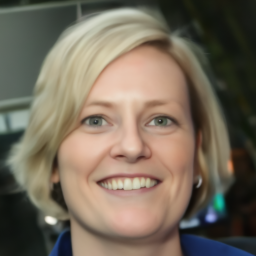}}
    & \raisebox{-25pt}{\includegraphics[width = 0.118\textwidth]{./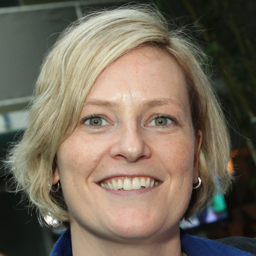}}\\
    \rotatebox[origin=c]{90}{\small{Inp (box)}} &\raisebox{-25pt}{\includegraphics[width = 0.118\textwidth]{./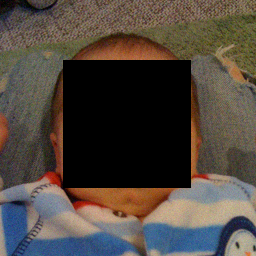}}
 & \raisebox{-25pt}{\includegraphics[width = 0.118\textwidth]{./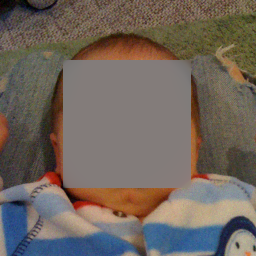}}
 & \raisebox{-25pt}{\includegraphics[width = 0.118\textwidth]{./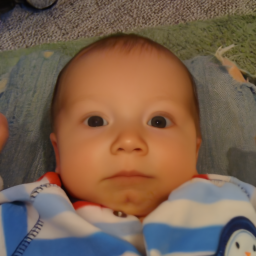}}
 & \raisebox{-25pt}{\includegraphics[width = 0.118\textwidth]{./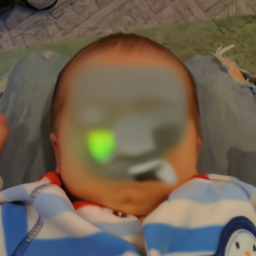}}
 & \raisebox{-25pt}{\includegraphics[width = 0.118\textwidth]{./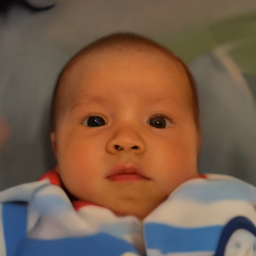}}
 & \raisebox{-25pt}{\includegraphics[width = 0.118\textwidth]{./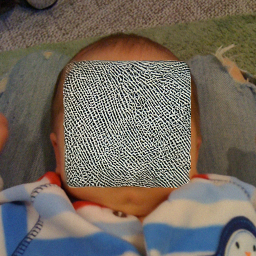}}
 & \raisebox{-25pt}{\includegraphics[width = 0.118\textwidth]{./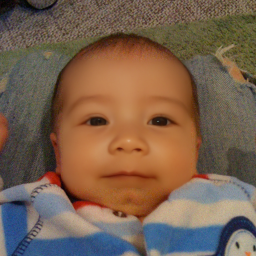}}
    & \raisebox{-25pt}{\includegraphics[width = 0.118\textwidth]{./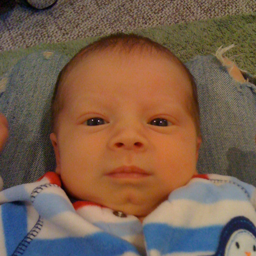}}\\
    \rotatebox[origin=c]{90}{\small{Inp (rand.)}} &  \raisebox{-25pt}{\includegraphics[width = 0.118\textwidth]{./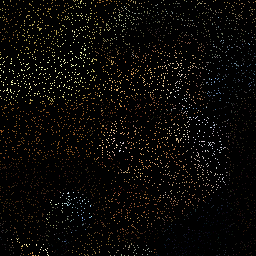}}
 & \raisebox{-25pt}{\includegraphics[width = 0.118\textwidth]{./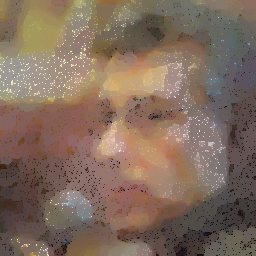}}
 & \raisebox{-25pt}{\includegraphics[width = 0.118\textwidth]{./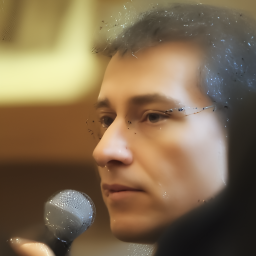}}
 & \raisebox{-25pt}{\includegraphics[width = 0.118\textwidth]{./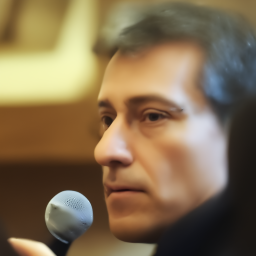}}
 & \raisebox{-25pt}{\includegraphics[width = 0.118\textwidth]{./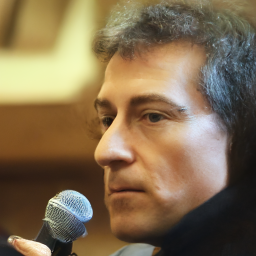}}
 & \raisebox{-25pt}{\includegraphics[width = 0.118\textwidth]{./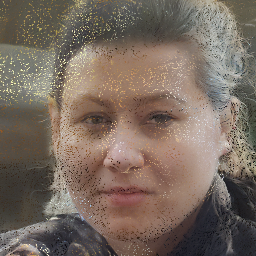}}
 & \raisebox{-25pt}{\includegraphics[width = 0.118\textwidth]{./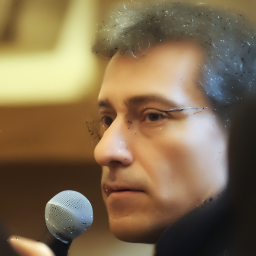}}
 & \raisebox{-25pt}{\includegraphics[width = 0.118\textwidth]{./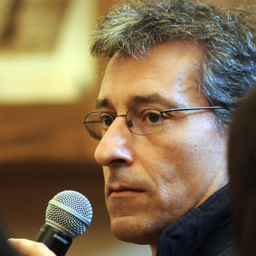}}\\ 
 & \small{Input} & \small{L2TV} & \small{DDNM$+$} & \small{$\Pi$GDM} 
 & \small{DPS} & \small{DMPS} & 
 \small{Ours} 
 & \small{GT}\\
  \end{tabular}
  \caption{Visualization of the results from different methods for Gaussian noisy linear problems on the FFHQ $256\times 256$. Gaussian noise ($\sigma_y=0.05$) is added to the measurement. \label{fig:ffhq_gauss}} %
 \end{figure}
 
 \begin{figure}[!htbp]
   \centering 
  \begin{tabular}{c@{\hspace*{1pt}}c@{\hspace*{1pt}}c@{\hspace*{1pt}}c@{\hspace*{1pt}}c@{\hspace*{1pt}}c@{\hspace*{1pt}}c@{\hspace*{1pt}}c@{\hspace*{1pt}}c@{\hspace*{1pt}}c@{\hspace*{2pt}}c@{\hspace*{2pt}}c@{\hspace*{2pt}}c@{\hspace*{1pt}}}
   \rotatebox[origin=c]{90}{\small{SR}} &\raisebox{-25pt}{\includegraphics[width = 0.118\textwidth]{./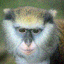}}
 & \raisebox{-25pt}{\includegraphics[width = 0.118\textwidth]{./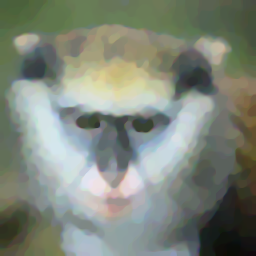}}
 & \raisebox{-25pt}{\includegraphics[width = 0.118\textwidth]{./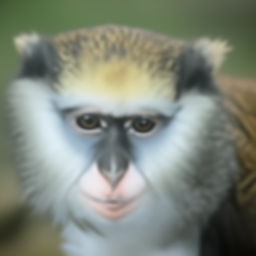}}
 & \raisebox{-25pt}{\includegraphics[width = 0.118\textwidth]{./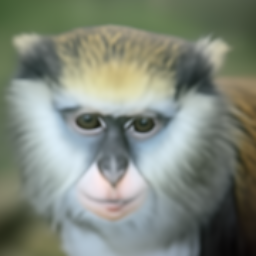}}
 & \raisebox{-25pt}{\includegraphics[width = 0.118\textwidth]{./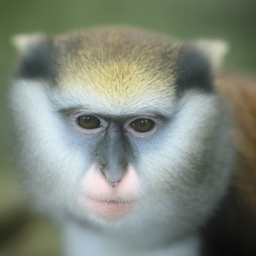}}
 & \raisebox{-25pt}{\includegraphics[width = 0.118\textwidth]{./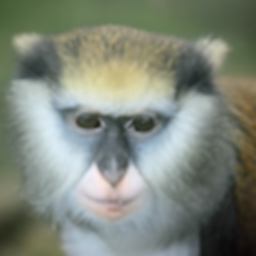}}
 & \raisebox{-25pt}{\includegraphics[width = 0.118\textwidth]{./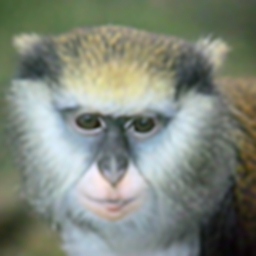}}
    & \raisebox{-25pt}{\includegraphics[width = 0.118\textwidth]{./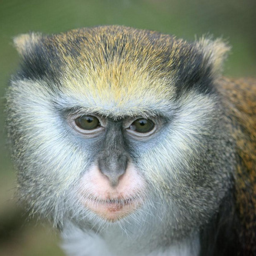}}\\
    \rotatebox[origin=c]{90}{\small{Deblur (Gau)}} &   \raisebox{-25pt}{\includegraphics[width = 0.118\textwidth]{./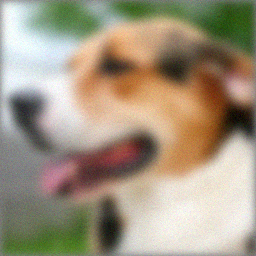}}
 & \raisebox{-25pt}{\includegraphics[width = 0.118\textwidth]{./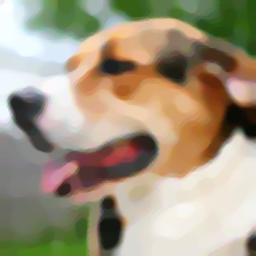}}
 & \raisebox{-25pt}{\includegraphics[width = 0.118\textwidth]{./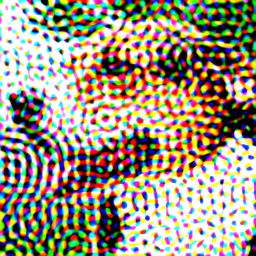}}
 & \raisebox{-25pt}{\includegraphics[width = 0.118\textwidth]{./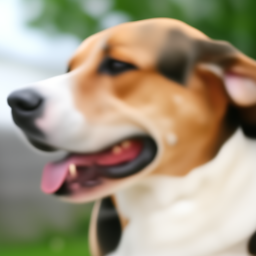}}
 & \raisebox{-25pt}{\includegraphics[width = 0.118\textwidth]{./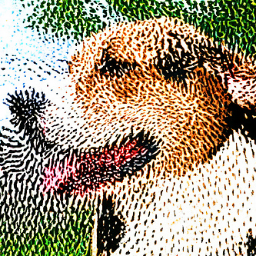}}
 & \raisebox{-25pt}{\includegraphics[width = 0.118\textwidth]{./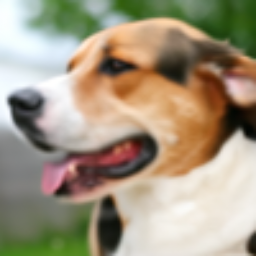}}
 & \raisebox{-25pt}{\includegraphics[width = 0.118\textwidth]{./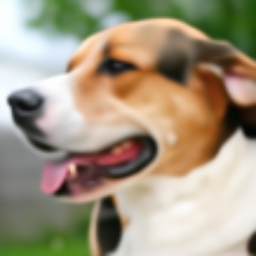}}
    & \raisebox{-25pt}{\includegraphics[width = 0.118\textwidth]{./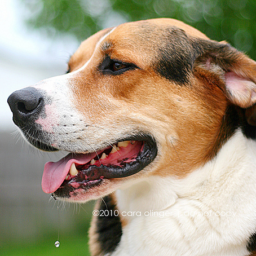}}\\
    \rotatebox[origin=c]{90}{\small{Deblur (uni)}} &\raisebox{-25pt}{\includegraphics[width = 0.118\textwidth]{./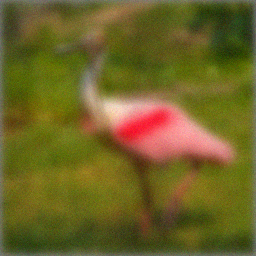}}
 & \raisebox{-25pt}{\includegraphics[width = 0.118\textwidth]{./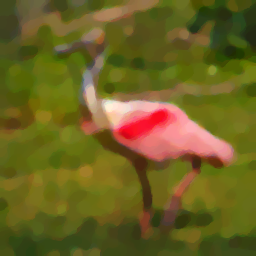}}
 & \raisebox{-25pt}{\includegraphics[width = 0.118\textwidth]{./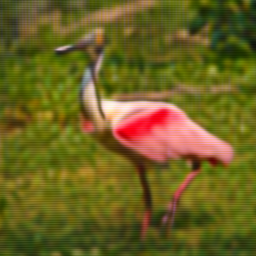}}
 & \raisebox{-25pt}{\includegraphics[width = 0.118\textwidth]{./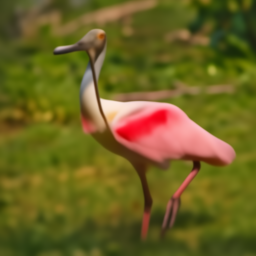}}
 & \raisebox{-25pt}{\includegraphics[width = 0.118\textwidth]{./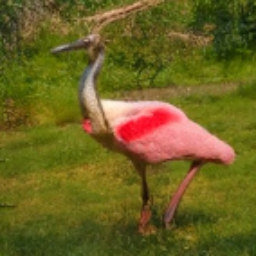}}
 & \raisebox{-25pt}{\includegraphics[width = 0.118\textwidth]{./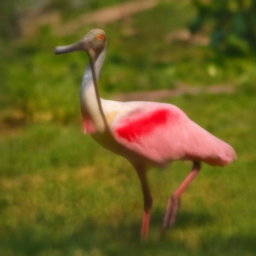}}
 & \raisebox{-25pt}{\includegraphics[width = 0.118\textwidth]{./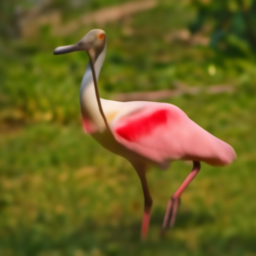}}
    & \raisebox{-25pt}{\includegraphics[width = 0.118\textwidth]{./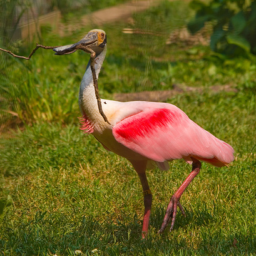}}\\
    \rotatebox[origin=c]{90}{\small{Inp (box)}} &   \raisebox{-25pt}{\includegraphics[width = 0.118\textwidth]{./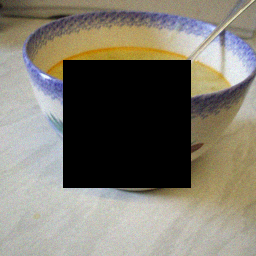}}
 & \raisebox{-25pt}{\includegraphics[width = 0.118\textwidth]{./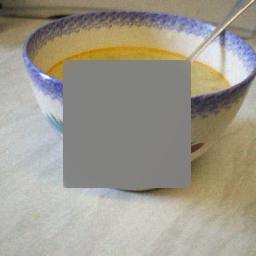}}
 & \raisebox{-25pt}{\includegraphics[width = 0.118\textwidth]{./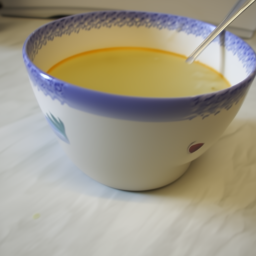}}
 & \raisebox{-25pt}{\includegraphics[width = 0.118\textwidth]{./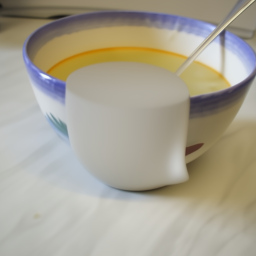}}
 & \raisebox{-25pt}{\includegraphics[width = 0.118\textwidth]{./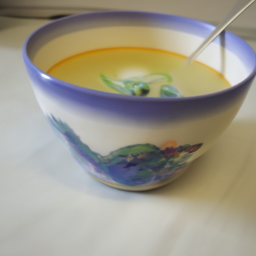}}
 & \raisebox{-25pt}{\includegraphics[width = 0.118\textwidth]{./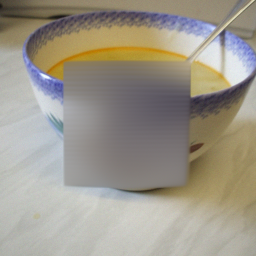}}
 & \raisebox{-25pt}{\includegraphics[width = 0.118\textwidth]{./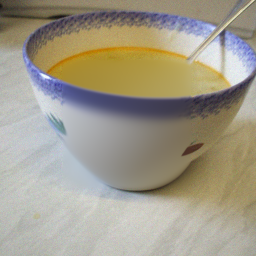}}
    & \raisebox{-25pt}{\includegraphics[width = 0.118\textwidth]{./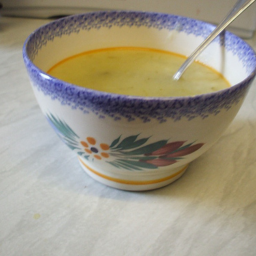}}\\
    \rotatebox[origin=c]{90}{\small{Inp (rand.)}} &      \raisebox{-25pt}{\includegraphics[width = 0.118\textwidth]{./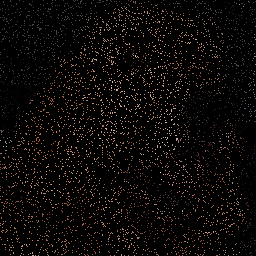}}
 & \raisebox{-25pt}{\includegraphics[width = 0.118\textwidth]{./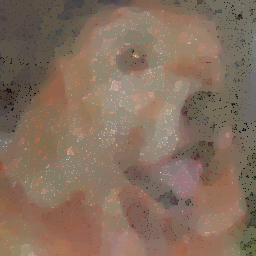}}
 & \raisebox{-25pt}{\includegraphics[width = 0.118\textwidth]{./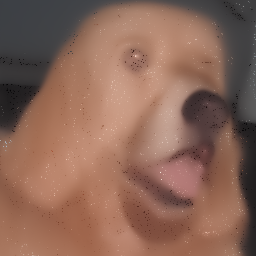}}
 & \raisebox{-25pt}{\includegraphics[width = 0.118\textwidth]{./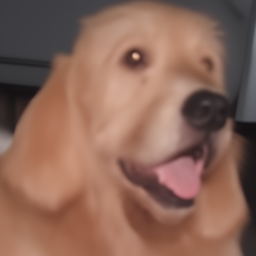}}
 & \raisebox{-25pt}{\includegraphics[width = 0.118\textwidth]{./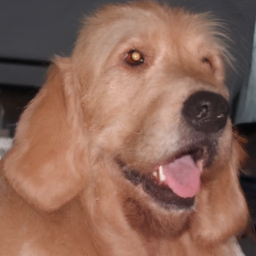}}
 & \raisebox{-25pt}{\includegraphics[width = 0.118\textwidth]{./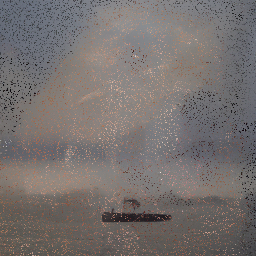}}
 & \raisebox{-25pt}{\includegraphics[width = 0.118\textwidth]{./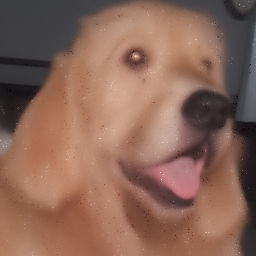}}
 & \raisebox{-25pt}{\includegraphics[width = 0.118\textwidth]{./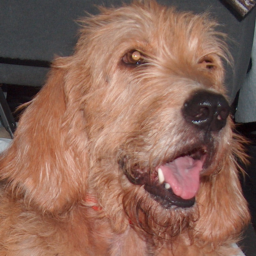}}\\
 & \small{input} & \small{L2TV} & \small{DDNM$+$} & \small{$\Pi$GDM} 
 & \small{DPS} & \small{DMPS} & 
 \small{Ours} 
 & \small{GT}\\
  \end{tabular}%
 \caption{Visualization of the results from different methods for Gaussian noisy linear problems on the ImageNet $256\times 256$. Gaussian noise ($\sigma_y=0.05$) is added to the measurement. \label{fig:img_gauss}} %
\end{figure}

\subsection{Experiments for SVD-free linear operators}

In this part, we present the performance of our methods for the SVD-free linear operators.  To obtain the SVD for general operators is a bottleneck for some complicated linear problems, hence DDRM and DDNM$+$ are excluded for comparison. We run 20-step CG to compute the multiplication of matrix inverse and a vector in~\eqref{eq:ours}. In the case of motion deblurring, a $61\times 61$ motion kernel is considered, as its SVD can be not implemented efficiently like blurring with a separable Gaussian kernel. We compare Ours to DPS~\cite{chung2022diffusion}, $\Pi$GDM and Meng's DMPS approach. See Table~\ref{tab:3} for the quantitative results and Figure~\ref{fig:motion_deblur} for the visualization. Notice that with fixed other hyperparameters, following the $\Pi$GDM, the results are noise-corrupted, hence we exclude it for the quantitative comparison.

\begin{table}[!htp]
   \centering
  \caption{Quantitative results for noisy motion deblurring (additive noise of $\sigma_y=0.05$) on the FFHQ and ImageNet dataset. \label{tab:3}}
\begin{tabular}{@{\hspace{0pt}}c@{\hspace{3pt}}c@{\hspace{3pt}}c@{\hspace{3pt}}c@{\hspace{3pt}}c@{\hspace{2pt}}c@{\hspace{3pt}}c@{\hspace{3pt}}c@{\hspace{3pt}}c@{\hspace{3pt}}cc@{\hspace{3pt}}c@{\hspace{3pt}}c@{\hspace{3pt}}c@{\hspace{2pt}}cc@{\hspace{3pt}}c@{\hspace{3pt}}c@{\hspace{3pt}}c@{\hspace{2pt}}c@{\hspace{3pt}}c@{\hspace{3pt}}c@{\hspace{0pt}}cc@{\hspace{3pt}}c@{\hspace{3pt}}c@{\hspace{0pt}}}
 \toprule
    &   \multicolumn{3}{c}{\textbf{FFHQ}}  && \multicolumn{3}{c}{\textbf{ImageNet}} \\
        \cline{2-4} \cline{6-8}
     Method & PSNR  & SSIM & LPIPS && PSNR  & SSIM & LPIPS \\
   \toprule
  DMPS   & \ul{30.51} & \ul{0.837} & \ul{0.232} &  & \ul{28.26} & \ul{0.782} & \ul{0.269} \\
  Ours   & \bl{32.30} & \bl{0.879} & \bl{0.142} &  & \bl{29.81} & \bl{0.810} & \bl{0.179} \\
   \midrule
   DPS       & 25.94 & 0.711 & 0.233 &  & 23.44 & 0.587 & 0.361 \\
\toprule
\end{tabular} %
\end{table}

\begin{figure}[!htbp]
   \centering 
  \begin{tabular}{c@{\hspace*{0pt}}c@{\hspace*{0pt}}c@{\hspace*{0pt}}c@{\hspace*{0pt}}c@{\hspace*{0pt}}c@{\hspace*{0pt}}c@{\hspace*{0pt}}c@{\hspace*{0pt}}c@{\hspace*{0pt}}c@{\hspace*{0pt}}c@{\hspace*{2pt}}c@{\hspace*{2pt}}c@{\hspace*{0pt}}}
 \includegraphics[width = 0.12\textwidth]{./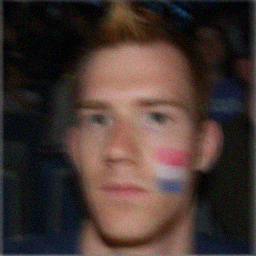}
    & \includegraphics[width = 0.12\textwidth]{./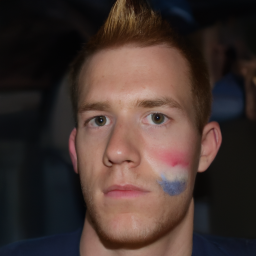}
  & \includegraphics[width = 0.12\textwidth]{./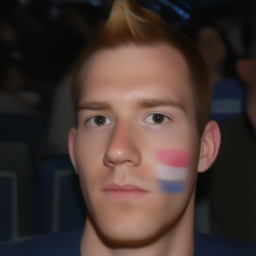}
 & \includegraphics[width = 0.12\textwidth]{./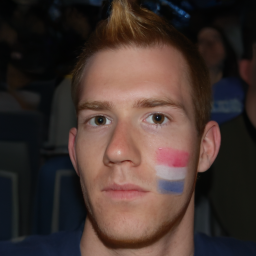}
 & \includegraphics[width = 0.12\textwidth]{./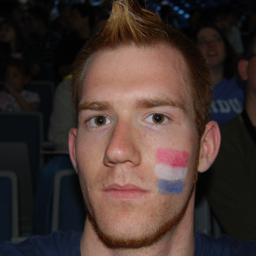}\\[-2pt]
 \includegraphics[width = 0.12\textwidth]{./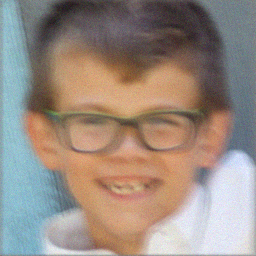}
    & \includegraphics[width = 0.12\textwidth]{./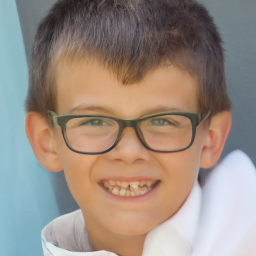}
       & \includegraphics[width = 0.12\textwidth]{./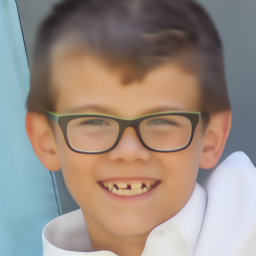}
 & \includegraphics[width = 0.12\textwidth]{./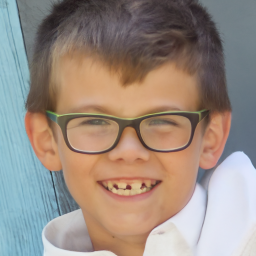}
 & \includegraphics[width = 0.12\textwidth]{./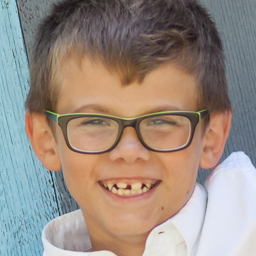}\\
    \tiny{Input} & \tiny{DPS} & \tiny{DMPS} & \tiny{Ours} & \tiny{GT} \\
 \end{tabular}
 \caption{Results for noisy motion deblurring.\label{fig:motion_deblur}} %
 \end{figure}
We also evaluate our method to the sparse-view CT application. Aligned with the available generative model, the VE SDE-based version~\eqref{eq:vesde} of our method is adopted. We simulate the CT measurement process with parallel beam geometry with evenly-spaced 180 degrees~\cite{chung2022improving}. Two configurations with $18$ and $30$ total views are tested. We compare our methods to the MCG~\cite{chung2022improving}, Score-based SDE~\cite{song2021solving}, and the traditional analytic FBP reconstruction. We consider both noiseless and noise cases. For the noisy case, \ie, Gaussian noise with $\sigma_y=3.0$ (corresponding to SNR$=24$dB) is added to the sinogram data. All compared diffusion methods are performed with 1000 diffusion steps. See Table~\ref{tab:ct} for the quantitative results on the 100 images from AAPM dataset. See Figure~\ref{fig:svct} for the visualization of the reconstruction from 18/30 views respectively. The projection step of MCG introduces noise in the results for noisy measurements. As a comparison to Score-SDE and MCG, our method yields the best or comparable  results with the least computing time. 

\begin{table}[!htp]
  \centering
  \caption{Quantitative results for noiseless and noisy sparse-view CT. \label{tab:ct}}
 \begin{tabular}{@{\hspace{0pt}}c@{\hspace{3pt}}c@{\hspace{3pt}}c@{\hspace{3pt}}c@{\hspace{3pt}}c@{\hspace{3pt}}c@{\hspace{3pt}}c@{\hspace{3pt}}c@{\hspace{3pt}}c@{\hspace{3pt}}c@{\hspace{3pt}}c@{\hspace{3pt}}c@{\hspace{3pt}}c@{\hspace{3pt}}c@{\hspace{3pt}}c@{\hspace{2pt}}c@{\hspace{3pt}}c@{\hspace{3pt}}c@{\hspace{3pt}}c@{\hspace{3pt}}c@{\hspace{2pt}}c@{\hspace{3pt}}c@{\hspace{3pt}}c@{\hspace{0pt}}c@{\hspace{3pt}}c@{\hspace{3pt}}c@{\hspace{3pt}}c@{\hspace{0pt}}}
   \toprule
  \multirow{3}{*}{Method}  & \multicolumn{5}{c}{\textbf{noiseless}} && \multicolumn{5}{c}{\textbf{noisy ($\sigma_y=3.0$)}} & \multirow{3}{*}{Time (s)} \\
   \cline{2-6} \cline{8-12}
    &   \multicolumn{2}{c}{\textbf{18-view}}  && \multicolumn{2}{c}{\textbf{30-view}} &&  \multicolumn{2}{c}{\textbf{18-view}}  && \multicolumn{2}{c}{\textbf{30-view}}\\
        \cline{2-3} \cline{5-6} \cline{8-9} \cline{11-12}
 & PSNR  & SSIM && PSNR  & SSIM && PSNR  & SSIM && PSNR  & SSIM &\\
   \toprule
   Score-SDE & 21.55 & 0.791 &  & 23.98 & 0.873 &  & 21.51 & 0.794 &  & 24.07 & 0.875 & 506 \\
   MCG       & 36.60  & 0.908 &  & 37.16 & 0.912 &  & 27.21 & 0.666 &  & 29.12 & 0.747 & 1270 \\
   Ours      & 36.14 & 0.921 &  & 37.34 & 0.93  &  & 29.37 & 0.868 &  & 29.51 & 0.869 & 241  \\
   \bottomrule
\end{tabular}
\end{table}

\begin{figure}[!htbp]
   \centering 
  \begin{tabular}{c@{\hspace*{1pt}}c@{\hspace*{1pt}}c@{\hspace*{1pt}}c@{\hspace*{1pt}}c@{\hspace*{1pt}}c@{\hspace*{1pt}}c@{\hspace*{1pt}}c@{\hspace*{1pt}}c@{\hspace*{1pt}}c@{\hspace*{2pt}}c@{\hspace*{2pt}}c@{\hspace*{2pt}}c@{\hspace*{1pt}}}
 \rotatebox[origin=c]{0}{\textbf{(a)}} &\raisebox{-0.5\height}{\includegraphics[width = 0.12\textwidth]{./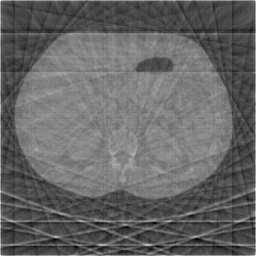}}
 & \raisebox{-0.5\height}{\includegraphics[width = 0.12\textwidth]{./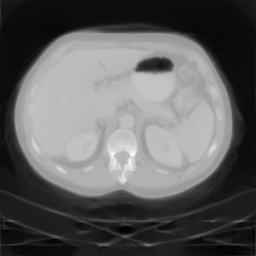}}
 & \raisebox{-0.5\height}{\includegraphics[width = 0.12\textwidth]{./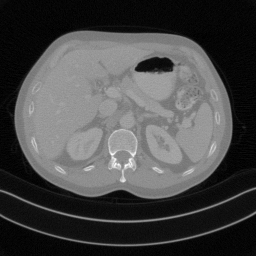}}
 & \raisebox{-0.5\height}{\includegraphics[width = 0.12\textwidth]{./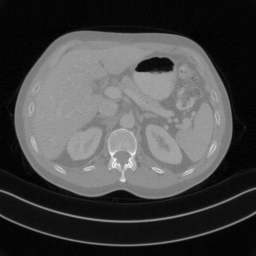}}
 & \raisebox{-0.5\height}{\includegraphics[width = 0.12\textwidth]{./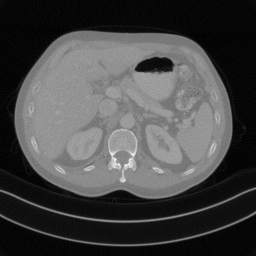}}\\[-2pt]
 \rotatebox[origin=c]{0}{\textbf{(b)}} &\raisebox{-0.5\height}{\includegraphics[width = 0.12\textwidth]{./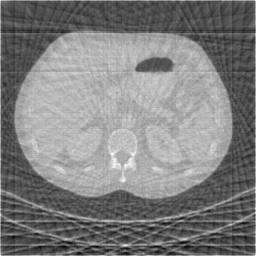}}
 & \raisebox{-0.5\height}{\includegraphics[width = 0.12\textwidth]{./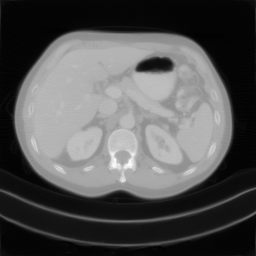}}
 & \raisebox{-0.5\height}{\includegraphics[width = 0.12\textwidth]{./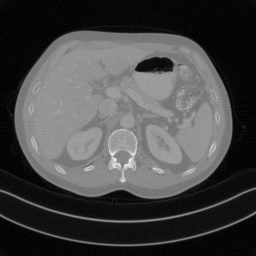}}
 & \raisebox{-0.5\height}{\includegraphics[width = 0.12\textwidth]{./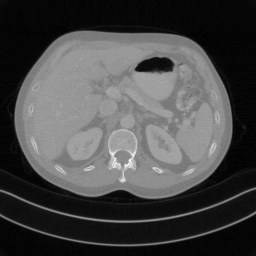}}
 & \raisebox{-0.5\height}{\includegraphics[width = 0.12\textwidth]{./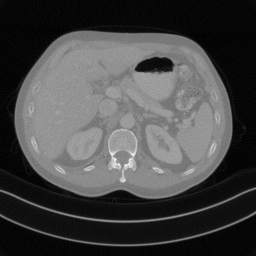}}\\[-2pt]
 \rotatebox[origin=c]{0}{\textbf{(c)}} &\raisebox{-0.5\height}{\includegraphics[width = 0.12\textwidth]{./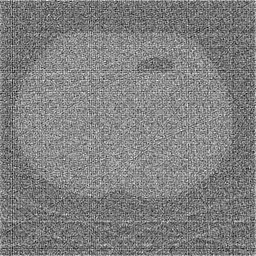}}
 & \raisebox{-0.5\height}{\includegraphics[width = 0.12\textwidth]{./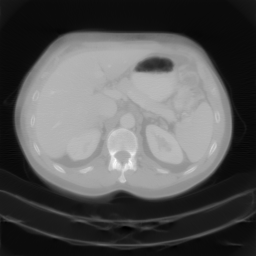}}
 & \raisebox{-0.5\height}{\includegraphics[width = 0.12\textwidth]{./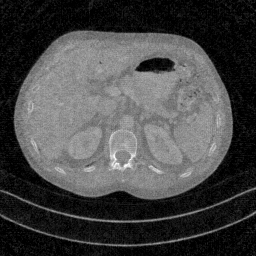}}
 & \raisebox{-0.5\height}{\includegraphics[width = 0.12\textwidth]{./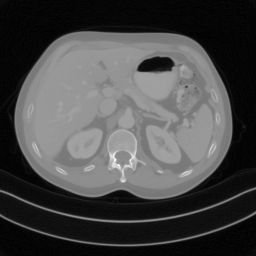}}
 & \raisebox{-0.5\height}{\includegraphics[width = 0.12\textwidth]{./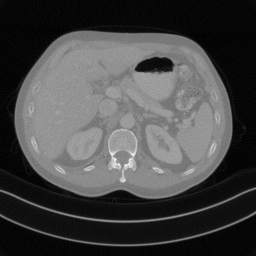}}\\[-2pt]
 \rotatebox[origin=c]{0}{\textbf{(d)}} &\raisebox{-0.5\height}{\includegraphics[width = 0.12\textwidth]{./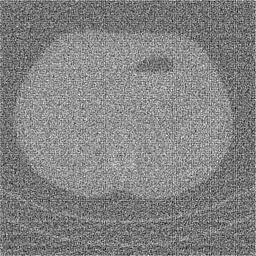}}
 & \raisebox{-0.5\height}{\includegraphics[width = 0.12\textwidth]{./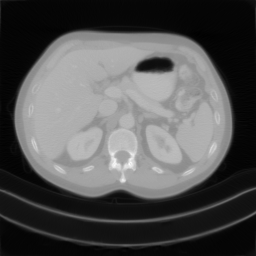}}
 & \raisebox{-0.5\height}{\includegraphics[width = 0.12\textwidth]{./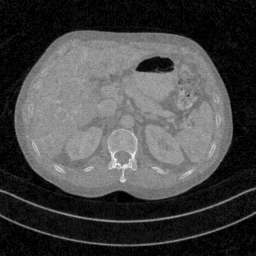}}
 & \raisebox{-0.5\height}{\includegraphics[width = 0.12\textwidth]{./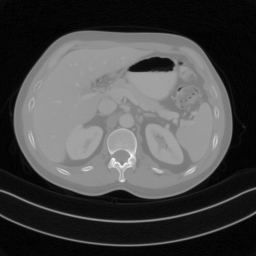}}
 & \raisebox{-0.5\height}{\includegraphics[width = 0.12\textwidth]{./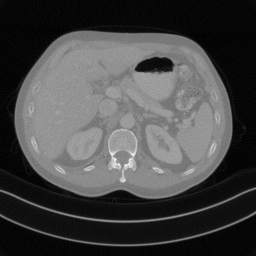}}\\
 &   \small{FBP} & \small{Score} & \small{MCG} & \small{Ours} 
 & \small{Ref}\\
 \end{tabular} 
 \caption{The results for 18/30-view CT reconstruction. The four rows corresponds to four cases. (a) and (b) 18/30-view noiseless measurements respectively, while (c) and (d) 18/30-view noisy measurements ($\sigma_y=3.0$) respectively. \label{fig:svct}} %
 \end{figure}
 
\subsection{JPEG decompression experiment}

We evaluate our method for the JPEG decompression task as well. We set the quality factor of JPEG compressing to 5,10 respectively and Gaussian noise with $\sigma_y=0.05$ is added to the JPEG compressed image. To the best of our knowledge, our method is the first work to address Gaussian noisy JPEG decompression. The works~\cite{kawar2022jpeg,song2022pseudoinverse} only consider noiseless case. Figure~\ref{fig:jpeg} shows the results of the reconstruction from noisy JPEG compression.

\begin{figure}[!htbp]
   \centering 
  \begin{tabular}{c@{\hspace*{0pt}}c@{\hspace*{0pt}}c@{\hspace*{1pt}}c@{\hspace*{0pt}}c@{\hspace*{0pt}}c@{\hspace*{0pt}}c@{\hspace*{0pt}}c@{\hspace*{0pt}}c@{\hspace*{0pt}}c@{\hspace*{0pt}}c@{\hspace*{2pt}}c@{\hspace*{2pt}}c@{\hspace*{0pt}}}
    \includegraphics[width = 0.12\textwidth]{./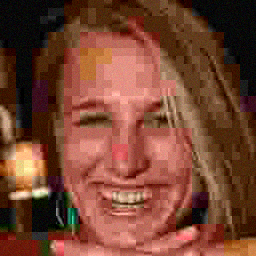}
     & \includegraphics[width = 0.12\textwidth]{./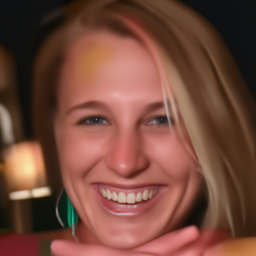}
 & \includegraphics[width = 0.12\textwidth]{./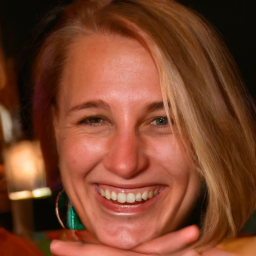}
    & \includegraphics[width = 0.12\textwidth]{./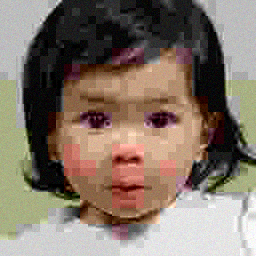}
       & \includegraphics[width = 0.12\textwidth]{./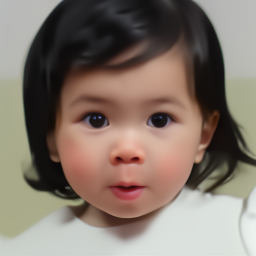}
 & \includegraphics[width = 0.12\textwidth]{./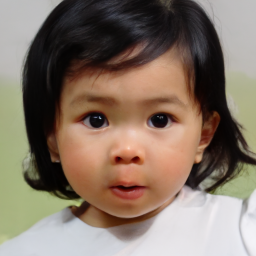}\\
    \includegraphics[width = 0.12\textwidth]{./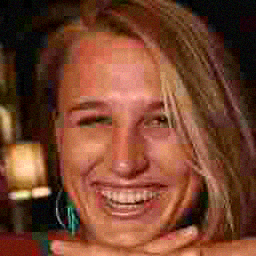}
     & \includegraphics[width = 0.12\textwidth]{./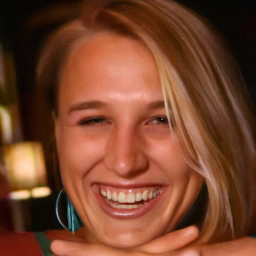}
 & \includegraphics[width = 0.12\textwidth]{./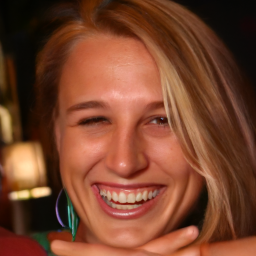}
    & \includegraphics[width = 0.12\textwidth]{./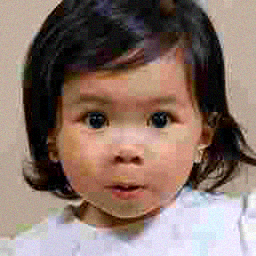}
       & \includegraphics[width = 0.12\textwidth]{./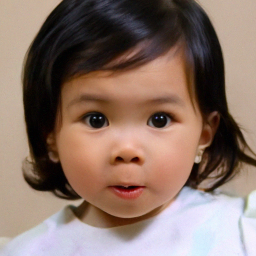}
 & \includegraphics[width = 0.12\textwidth]{./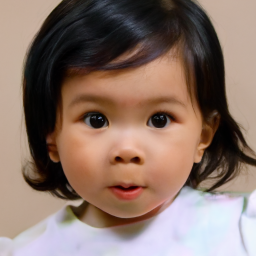}\\
    \tiny{Input} & \tiny{DMPS} & \tiny{Ours} & \tiny{Input} & \tiny{DMPS} & \tiny{Ours}\\
 \end{tabular}
 \caption{Results for noisy JPEG decompression. Top/bottom are results for QF=5/10 respectively.\label{fig:jpeg}} %
\end{figure}

\section{Conclusion}\label{sec:conclusion}

The diffusion posterior sampling framework is a new approach to inverse problem solving, where the diffusion model serves as the image prior. The existing measurement guidance step employs a gradient descent with respect to a noisy likelihood approximation. Motivated by the connection between the two expectations of $p(\bm{x}_0|\bm{x}_t)$ and $p(\bm{x}_0|\bm{x}_t,\bm{y})$, we derive an MAP-based proxy solution to compute the expectation of $p(\bm{x}_0|\bm{x}_t,\bm{y})$. With this approach, the method is more flexible and avoids the backpropagation of existing DPS-like methods, which saves the computation cost and reduces the memory consumption. Unlike the SVD-dependent works~\cite{kawar2022denoising,wang2022zero}, our method allows a broad range of linear operators without the necessity of a full SVD implementation. Comparing to highly related work~\cite{meng2022diffusion}, which assumed a crude uninformative prior $p(\bm{x}_0)$, our approximation contains an additional extra, which is crucial for inpainting tasks. 
\section*{Acknowledgments}
We would like to acknowledge the authors' affiliated institution for providing the computing environment that made this study possible.

\bibliographystyle{siamplain}

\begin{thebibliography}{10}

  \bibitem{aharon2006k}
  {\sc M.~Aharon, M.~Elad, and A.~Bruckstein}, {\em K-svd: An algorithm for
    designing overcomplete dictionaries for sparse representation}, IEEE
    Transactions on signal processing, 54 (2006), pp.~4311--4322.
  
  \bibitem{arbas2023polynomial}
  {\sc J.~Arbas, H.~Ashtiani, and C.~Liaw}, {\em Polynomial time and private
    learning of unbounded gaussian mixture models}, in International Conference
    on Machine Learning, PMLR, 2023, pp.~1018--1040.
  
  \bibitem{bora2017compressed}
  {\sc A.~Bora, A.~Jalal, E.~Price, and A.~G. Dimakis}, {\em Compressed sensing
    using generative models}, in International Conference on Machine Learning,
    PMLR, 2017, pp.~537--546.
  
  \bibitem{boys2023tweedie}
  {\sc B.~Boys, M.~Girolami, J.~Pidstrigach, S.~Reich, A.~Mosca, and O.~D.
    Akyildiz}, {\em Tweedie moment projected diffusions for inverse problems},
    arXiv preprint arXiv:2310.06721,  (2023).
  
  \bibitem{cai2012image}
  {\sc J.-F. Cai, B.~Dong, S.~Osher, and Z.~Shen}, {\em Image restoration: total
    variation, wavelet frames, and beyond}, Journal of the American Mathematical
    Society, 25 (2012), pp.~1033--1089.
  
  \bibitem{chakhlov2016current}
  {\sc S.~V. Chakhlov, S.~P. Osipov, A.~K. Temnik, and V.~A. Udod}, {\em The
    current state and prospects of x-ray computational tomography}, Russian
    Journal of Nondestructive Testing, 52 (2016), pp.~235--244.
  
  \bibitem{chambolle2010introduction}
  {\sc A.~Chambolle, V.~Caselles, D.~Cremers, M.~Novaga, and T.~Pock}, {\em An
    introduction to total variation for image analysis}, Theoretical foundations
    and numerical methods for sparse recovery, 9 (2010), p.~227.
  
  \bibitem{chan2006total}
  {\sc T.~Chan, S.~Esedoglu, F.~Park, and A.~Yip}, {\em Total variation image
    restoration: Overview and recent developments}, Handbook of mathematical
    models in computer vision,  (2006), pp.~17--31.
  
  \bibitem{choi2021ilvr}
  {\sc J.~Choi, S.~Kim, Y.~Jeong, Y.~Gwon, and S.~Yoon}, {\em Ilvr: Conditioning
    method for denoising diffusion probabilistic models}, in 2021 IEEE/CVF
    International Conference on Computer Vision (ICCV), IEEE, 2021,
    pp.~14347--14356.
  
  \bibitem{chung2022diffusion}
  {\sc H.~Chung, J.~Kim, M.~T. Mccann, M.~L. Klasky, and J.~C. Ye}, {\em
    Diffusion posterior sampling for general noisy inverse problems}, in The
    Eleventh International Conference on Learning Representations, 2022.
  
  \bibitem{chung2022improving}
  {\sc H.~Chung, B.~Sim, D.~Ryu, and J.~C. Ye}, {\em Improving diffusion models
    for inverse problems using manifold constraints}, vol.~35, 2022,
    pp.~25683--25696.
  
  \bibitem{chung2022come}
  {\sc H.~Chung, B.~Sim, and J.~C. Ye}, {\em Come-closer-diffuse-faster:
    Accelerating conditional diffusion models for inverse problems through
    stochastic contraction}, in Proceedings of the IEEE/CVF Conference on
    Computer Vision and Pattern Recognition, 2022, pp.~12413--12422.
  
  \bibitem{chung2022score}
  {\sc H.~Chung and J.~C. Ye}, {\em Score-based diffusion models for accelerated
    mri}, Medical image analysis, 80 (2022), p.~102479.
  
  \bibitem{delbracio2023inversion}
  {\sc M.~Delbracio and P.~Milanfar}, {\em Inversion by direct iteration: An
    alternative to denoising diffusion for image restoration}, Transactions on
    Machine Learning Research,  (2023),
    \url{https://openreview.net/forum?id=VmyFF5lL3F}.
  \newblock Featured Certification.
  
  \bibitem{deng2009imagenet}
  {\sc J.~Deng, W.~Dong, R.~Socher, L.-J. Li, K.~Li, and L.~Fei-Fei}, {\em
    Imagenet: A large-scale hierarchical image database}, in 2009 IEEE conference
    on computer vision and pattern recognition, Ieee, 2009, pp.~248--255.
  
  \bibitem{dhariwal2021diffusion}
  {\sc P.~Dhariwal and A.~Nichol}, {\em Diffusion models beat gans on image
    synthesis}, vol.~34, 2021, pp.~8780--8794.
  
  \bibitem{ding2020low}
  {\sc Q.~Ding, G.~Chen, X.~Zhang, Q.~Huang, H.~Ji, and H.~Gao}, {\em Low-dose
    {CT} with deep learning regularization via proximal forward backward
    splitting}, Physics in Medicine \& Biology,  (2020).
  
  \bibitem{dinh2016density}
  {\sc L.~Dinh, J.~Sohl-Dickstein, and S.~Bengio}, {\em Density estimation using
    real nvp}, arXiv preprint arXiv:1605.08803,  (2016).
  
  \bibitem{dong2012wavelet}
  {\sc B.~Dong, H.~Ji, J.~Li, Z.~Shen, and Y.~Xu}, {\em Wavelet frame based blind
    image inpainting}, Applied and Computational Harmonic Analysis, 32 (2012),
    pp.~268--279.
  
  \bibitem{donoho2006compressed}
  {\sc D.~L. Donoho}, {\em Compressed sensing}, IEEE Transactions on information
    theory, 52 (2006), pp.~1289--1306.
  
  \bibitem{feng2023score}
  {\sc B.~T. Feng, J.~Smith, M.~Rubinstein, H.~Chang, K.~L. Bouman, and W.~T.
    Freeman}, {\em Score-based diffusion models as principled priors for inverse
    imaging}, in International Conference on Computer Vision (ICCV), IEEE, 2023.
  
  \bibitem{gao2023implicit}
  {\sc S.~Gao, X.~Liu, B.~Zeng, S.~Xu, Y.~Li, X.~Luo, J.~Liu, X.~Zhen, and
    B.~Zhang}, {\em Implicit diffusion models for continuous super-resolution},
    in Proceedings of the IEEE/CVF Conference on Computer Vision and Pattern
    Recognition, 2023, pp.~10021--10030.
  
  \bibitem{goodfellow2020generative}
  {\sc I.~Goodfellow, J.~Pouget-Abadie, M.~Mirza, B.~Xu, D.~Warde-Farley,
    S.~Ozair, A.~Courville, and Y.~Bengio}, {\em Generative adversarial
    networks}, Communications of the ACM, 63 (2020), pp.~139--144.
  
  \bibitem{graikos2022diffusion}
  {\sc A.~Graikos, N.~Malkin, N.~Jojic, and D.~Samaras}, {\em Diffusion models as
    plug-and-play priors}, vol.~35, 2022, pp.~14715--14728.
  
  \bibitem{ho2020denoising}
  {\sc J.~Ho, A.~Jain, and P.~Abbeel}, {\em Denoising diffusion probabilistic
    models}, vol.~33, 2020, pp.~6840--6851.
  
  \bibitem{hurault2022gradient}
  {\sc S.~Hurault, A.~Leclaire, and N.~Papadakis}, {\em Gradient step denoiser
    for convergent plug-and-play}, in International Conference on Learning
    Representations, 2022, \url{https://openreview.net/forum?id=fPhKeld3Okz}.
  
  \bibitem{hurault2022proximal}
  {\sc S.~Hurault, A.~Leclaire, and N.~Papadakis}, {\em Proximal denoiser for
    convergent plug-and-play optimization with nonconvex regularization}, in
    International Conference on Machine Learning, PMLR, 2022, pp.~9483--9505.
  
  \bibitem{jalal2021robust}
  {\sc A.~Jalal, M.~Arvinte, G.~Daras, E.~Price, A.~G. Dimakis, and J.~Tamir},
    {\em Robust compressed sensing mri with deep generative priors}, Advances in
    Neural Information Processing Systems, 34 (2021), pp.~14938--14954.
  
  \bibitem{kadkhodaie2021stochastic}
  {\sc Z.~Kadkhodaie and E.~Simoncelli}, {\em Stochastic solutions for linear
    inverse problems using the prior implicit in a denoiser}, Advances in Neural
    Information Processing Systems, 34 (2021), pp.~13242--13254.
  
  \bibitem{karras2022elucidating}
  {\sc T.~Karras, M.~Aittala, T.~Aila, and S.~Laine}, {\em Elucidating the design
    space of diffusion-based generative models}, vol.~35, 2022, pp.~26565--26577.
  
  \bibitem{karras2019style}
  {\sc T.~Karras, S.~Laine, and T.~Aila}, {\em A style-based generator
    architecture for generative adversarial networks}, in Proceedings of the
    IEEE/CVF conference on computer vision and pattern recognition, 2019,
    pp.~4401--4410.
  
  \bibitem{kawar2022denoising}
  {\sc B.~Kawar, M.~Elad, S.~Ermon, and J.~Song}, {\em Denoising diffusion
    restoration models}, in Advances in Neural Information Processing Systems,
    2022.
  
  \bibitem{kawar2022jpeg}
  {\sc B.~Kawar, J.~Song, S.~Ermon, and M.~Elad}, {\em Jpeg artifact correction
    using denoising diffusion restoration models}, in NeurIPS 2022 Workshop on
    Score-Based Methods, 2022.
  
  \bibitem{kawar2021snips}
  {\sc B.~Kawar, G.~Vaksman, and M.~Elad}, {\em Snips: Solving noisy inverse
    problems stochastically}, vol.~34, 2021, pp.~21757--21769.
  
  \bibitem{kawar2021stochastic}
  {\sc B.~Kawar, G.~Vaksman, and M.~Elad}, {\em Stochastic image denoising by
    sampling from the posterior distribution}, in Proceedings of the IEEE/CVF
    International Conference on Computer Vision, 2021, pp.~1866--1875.
  
  \bibitem{kingma2018glow}
  {\sc D.~P. Kingma and P.~Dhariwal}, {\em Glow: Generative flow with invertible
    1x1 convolutions}, vol.~31, 2018.
  
  \bibitem{kingma2013auto}
  {\sc D.~P. Kingma and M.~Welling}, {\em Auto-encoding variational bayes}, arXiv
    preprint arXiv:1312.6114,  (2013).
  
  \bibitem{li2024decoupled}
  {\sc X.~Li, S.~M. Kwon, I.~R. Alkhouri, S.~Ravishanka, and Q.~Qu}, {\em
    Decoupled data consistency with diffusion purification for image
    restoration}, arXiv preprint arXiv:2403.06054,  (2024).
  
  \bibitem{liang2000principles}
  {\sc Z.-P. Liang and P.~C. Lauterbur}, {\em Principles of magnetic resonance
    imaging: a signal processing perspective}, SPIE Optical Engineering Press,
    2000.
  
  \bibitem{pmlr-v202-liu23ai}
  {\sc G.-H. Liu, A.~Vahdat, D.-A. Huang, E.~Theodorou, W.~Nie, and
    A.~Anandkumar}, {\em {I}$^2${SB}: Image-to-image schrödinger bridge}, in
    Proceedings of the 40th International Conference on Machine Learning,
    A.~Krause, E.~Brunskill, K.~Cho, B.~Engelhardt, S.~Sabato, and J.~Scarlett,
    eds., vol.~202 of Proceedings of Machine Learning Research, PMLR, 23--29 Jul
    2023, pp.~22042--22062,
    \url{https://proceedings.mlr.press/v202/liu23ai.html}.
  
  \bibitem{liu2023dolce}
  {\sc J.~Liu, R.~Anirudh, J.~J. Thiagarajan, S.~He, K.~A. Mohan, U.~S. Kamilov,
    and H.~Kim}, {\em Dolce: A model-based probabilistic diffusion framework for
    limited-angle ct reconstruction}, in Proceedings of the IEEE/CVF
    International Conference on Computer Vision, 2023, pp.~10498--10508.
  
  \bibitem{liu2018image}
  {\sc J.~Liu, T.~Kuang, and X.~Zhang}, {\em Image reconstruction by splitting
    deep learning regularization from iterative inversion}, in International
    Conference on Medical Image Computing and Computer-Assisted Intervention,
    Springer, 2018, pp.~224--231.
  
  \bibitem{lugmayr2022repaint}
  {\sc A.~Lugmayr, M.~Danelljan, A.~Romero, F.~Yu, R.~Timofte, and L.~Van~Gool},
    {\em Repaint: Inpainting using denoising diffusion probabilistic models}, in
    Proceedings of the IEEE/CVF Conference on Computer Vision and Pattern
    Recognition, 2022, pp.~11461--11471.
  
  \bibitem{luo2023image}
  {\sc Z.~Luo, F.~K. Gustafsson, Z.~Zhao, J.~Sj{\"o}lund, and T.~B. Sch{\"o}n},
    {\em Image restoration with mean-reverting stochastic differential
    equations}, International Conference on Machine Learning,  (2023).
  
  \bibitem{martin2024pnp}
  {\sc S.~Martin, A.~Gagneux, P.~Hagemann, and G.~Steidl}, {\em Pnp-flow:
    Plug-and-play image restoration with flow matching}, arXiv preprint
    arXiv:2410.02423,  (2024).
  
  \bibitem{meinhardt2017learning}
  {\sc T.~Meinhardt, M.~Moller, C.~Hazirbas, and D.~Cremers}, {\em Learning
    proximal operators: Using denoising networks for regularizing inverse imaging
    problems}, in ICCV, 2017, pp.~1781--1790.
  
  \bibitem{meng2022diffusion}
  {\sc X.~Meng and Y.~Kabashima}, {\em Diffusion model based posterior sampling
    for noisy linear inverse problems}, arXiv preprint arXiv:2211.12343,  (2022).
  
  \bibitem{Mousavi2015}
  {\sc A.~Mousavi, A.~Patel, and R.~Baraniuk}, {\em A deep learning approach to
    structured signal recovery}, in Allerton, IEEE, 2015, pp.~1336--1343.
  
  \bibitem{Nan_2020_CVPR}
  {\sc Y.~Nan, Y.~Quan, and H.~Ji}, {\em Variational-{EM}-based deep learning for
    noise-blind image deblurring}, in CVPR, June 2020, pp.~3626--3635.
  
  \bibitem{ozturkler2023regularization}
  {\sc B.~Ozturkler, M.~Mardani, A.~Vahdat, J.~Kautz, and J.~M. Pauly}, {\em
    Regularization by denoising diffusion process for mri reconstruction}, in
    Medical Imaging with Deep Learning, short paper track, 2023.
  
  \bibitem{pan2020exploiting}
  {\sc X.~Pan, X.~Zhan, B.~Dai, D.~Lin, C.~C. Loy, and P.~Luo}, {\em Exploiting
    deep generative prior for versatile image restoration and manipulation}, in
    Computer Vision--ECCV 2020: 16th European Conference, Glasgow, UK, August
    23--28, 2020, Proceedings, Part II, 2020, pp.~262--277.
  
  \bibitem{pan2021exploiting}
  {\sc X.~Pan, X.~Zhan, B.~Dai, D.~Lin, C.~C. Loy, and P.~Luo}, {\em Exploiting
    deep generative prior for versatile image restoration and manipulation}, IEEE
    Transactions on Pattern Analysis and Machine Intelligence, 44 (2021),
    pp.~7474--7489.
  
  \bibitem{peng2024imp}
  {\sc X.~Peng, Z.~Zheng, W.~Dai, N.~Xiao, C.~Li, J.~Zou, and H.~Xiong}, {\em
    Improving diffusion models for inverse problems using optimal posterior
    covariance}, in Proceedings of the 41st International Conference on Machine
    Learning, R.~Salakhutdinov, Z.~Kolter, K.~Heller, A.~Weller, N.~Oliver,
    J.~Scarlett, and F.~Berkenkamp, eds., vol.~235 of Proceedings of Machine
    Learning Research, PMLR, 21--27 Jul 2024, pp.~40347--40370.
  
  \bibitem{pleschberger2020explicit}
  {\sc M.~Pleschberger, S.~Schrunner, and J.~Pilz}, {\em An explicit solution for
    image restoration using markov random fields}, Journal of Signal Processing
    Systems, 92 (2020), pp.~257--267.
  
  \bibitem{quan2015data}
  {\sc Y.~Quan, H.~Ji, and Z.~Shen}, {\em Data-driven multi-scale non-local
    wavelet frame construction and image recovery}, Journal of Scientific
    Computing, 63 (2015), pp.~307--329.
  
  \bibitem{rudin1992nonlinear}
  {\sc L.~I. Rudin, S.~Osher, and E.~Fatemi}, {\em Nonlinear total variation
    based noise removal algorithms}, Physica D: nonlinear phenomena, 60 (1992),
    pp.~259--268.
  
  \bibitem{ryu2019plug}
  {\sc E.~Ryu, J.~Liu, S.~Wang, X.~Chen, Z.~Wang, and W.~Yin}, {\em Plug-and-play
    methods provably converge with properly trained denoisers}, in International
    Conference on Machine Learning, PMLR, 2019, pp.~5546--5557.
  
  \bibitem{saharia2022palette}
  {\sc C.~Saharia, W.~Chan, H.~Chang, C.~Lee, J.~Ho, T.~Salimans, D.~Fleet, and
    M.~Norouzi}, {\em Palette: Image-to-image diffusion models}, in ACM SIGGRAPH
    2022 Conference Proceedings, 2022, pp.~1--10.
  
  \bibitem{saharia2022image}
  {\sc C.~Saharia, J.~Ho, W.~Chan, T.~Salimans, D.~J. Fleet, and M.~Norouzi},
    {\em Image super-resolution via iterative refinement}, IEEE Transactions on
    Pattern Analysis and Machine Intelligence, 45 (2022), pp.~4713--4726.
  
  \bibitem{shi2019scalable}
  {\sc W.~Shi, F.~Jiang, S.~Liu, and D.~Zhao}, {\em Scalable convolutional neural
    network for image compressed sensing}, in CVPR, 2019, pp.~12290--12299.
  
  \bibitem{song2021denoising}
  {\sc J.~Song, C.~Meng, and S.~Ermon}, {\em Denoising diffusion implicit
    models}, in ICLR, 2021.
  
  \bibitem{song2022pseudoinverse}
  {\sc J.~Song, A.~Vahdat, M.~Mardani, and J.~Kautz}, {\em Pseudoinverse-guided
    diffusion models for inverse problems}, in International Conference on
    Learning Representations, 2022.
  
  \bibitem{song2021maximum}
  {\sc Y.~Song, C.~Durkan, I.~Murray, and S.~Ermon}, {\em Maximum likelihood
    training of score-based diffusion models}, Advances in neural information
    processing systems, 34 (2021), pp.~1415--1428.
  
  \bibitem{song2019generative}
  {\sc Y.~Song and S.~Ermon}, {\em Generative modeling by estimating gradients of
    the data distribution}, vol.~32, 2019.
  
  \bibitem{song2021solving}
  {\sc Y.~Song, L.~Shen, L.~Xing, and S.~Ermon}, {\em Solving inverse problems in
    medical imaging with score-based generative models}, in International
    Conference on Learning Representations, 2021.
  
  \bibitem{song2020score}
  {\sc Y.~Song, J.~Sohl-Dickstein, D.~P. Kingma, A.~Kumar, S.~Ermon, and
    B.~Poole}, {\em Score-based generative modeling through stochastic
    differential equations}, in International Conference on Learning
    Representations, 2020.
  
  \bibitem{sun2019online}
  {\sc Y.~Sun, B.~Wohlberg, and U.~S. Kamilov}, {\em An online plug-and-play
    algorithm for regularized image reconstruction}, IEEE Transactions on
    Computational Imaging, 5 (2019), pp.~395--408.
  
  \bibitem{tan2024provably}
  {\sc H.~Y. Tan, S.~Mukherjee, J.~Tang, and C.-B. Sch{\"o}nlieb}, {\em Provably
    convergent plug-and-play quasi-newton methods}, SIAM Journal on Imaging
    Sciences, 17 (2024), pp.~785--819.
  
  \bibitem{wang2022zero}
  {\sc Y.~Wang, J.~Yu, and J.~Zhang}, {\em Zero-shot image restoration using
    denoising diffusion null-space model}, in The Eleventh International
    Conference on Learning Representations, 2022.
  
  \bibitem{whang2022deblurring}
  {\sc J.~Whang, M.~Delbracio, H.~Talebi, C.~Saharia, A.~G. Dimakis, and
    P.~Milanfar}, {\em Deblurring via stochastic refinement}, in Proceedings of
    the IEEE/CVF Conference on Computer Vision and Pattern Recognition, 2022,
    pp.~16293--16303.
  
  \bibitem{whang2021solving}
  {\sc J.~Whang, Q.~Lei, and A.~Dimakis}, {\em Solving inverse problems with a
    flow-based noise model}, in International Conference on Machine Learning,
    PMLR, 2021, pp.~11146--11157.
  
  \bibitem{xia2023diffir}
  {\sc B.~Xia, Y.~Zhang, S.~Wang, Y.~Wang, X.~Wu, Y.~Tian, W.~Yang, and
    L.~Van~Gool}, {\em Diffir: Efficient diffusion model for image restoration},
    2023.
  
  \bibitem{xu2023restart}
  {\sc Y.~Xu, M.~Deng, X.~Cheng, Y.~Tian, Z.~Liu, and T.~Jaakkola}, {\em Restart
    sampling for improving generative processes}, in Advances in Neural
    Information Processing Systems, vol.~36, 2023, pp.~76806--76838.
  
  \bibitem{sun2016deep}
  {\sc Y.~Yang, J.~Sun, H.~Li, and Z.~Xu}, {\em Deep admm-net for compressive
    sensing {MRI}}, in NeurIPS, 2016, pp.~10--18.
  
  \bibitem{zhang2018ista}
  {\sc J.~Zhang and B.~Ghanem}, {\em Ista-net: Interpretable
    optimization-inspired deep network for image compressive sensing}, in
    Proceedings of the IEEE conference on computer vision and pattern
    recognition, 2018, pp.~1828--1837.
  
  \bibitem{zhang2017learning}
  {\sc J.~Zhang, J.~Pan, W.-S. Lai, R.~W. Lau, and M.-H. Yang}, {\em Learning
    fully convolutional networks for iterative non-blind deconvolution}, in CVPR,
    2017, pp.~3817--3825.
  
  \bibitem{zhang2017beyond}
  {\sc K.~Zhang, W.~Zuo, Y.~Chen, D.~Meng, and L.~Zhang}, {\em Beyond a gaussian
    denoiser: Residual learning of deep cnn for image denoising}, IEEE
    transactions on image processing, 26 (2017), pp.~3142--3155.
  
  \bibitem{zhu2023denoising}
  {\sc Y.~Zhu, K.~Zhang, J.~Liang, J.~Cao, B.~Wen, R.~Timofte, and L.~Van~Gool},
    {\em Denoising diffusion models for plug-and-play image restoration}, in
    Proceedings of the IEEE/CVF Conference on Computer Vision and Pattern
    Recognition Workshops, 2023, pp.~1219--1229.
  
  \end{thebibliography}

\end{document}